\documentclass[11 pt,colorinlistoftodos]{article}

\usepackage{amsmath,amsthm,amsfonts,amssymb,csquotes,epsfig, titlesec, epsfig, float, caption,wrapfig, mathrsfs}
\usepackage[hidelinks]{hyperref}
\usepackage{tikz-cd}
\usepackage{tikz}
\usepackage{tikz-3dplot}
\usepackage{xcolor}
\usepackage{verbatim}
\usepackage{accents}
\usepackage{marvosym}
\usepackage{multirow}
\usepackage{multicol}
\usepackage[citestyle=alphabetic,bibstyle=alphabetic,backend=bibtex, maxbibnames=10,minbibnames=5]{biblatex}
\usepackage[colorinlistoftodos]{todonotes}
\usepackage[]{enumerate}
\usepackage[american]{babel}

\usetikzlibrary{calc, decorations.pathreplacing,shapes.misc}
\usetikzlibrary{decorations.pathmorphing}
\usepackage[left=1in,top=1in,right=1in]{geometry}
\usepackage[capitalize]{cleveref}
\usepackage{subcaption}
\usetikzlibrary{shapes.geometric}

\newtheorem{thm}{Theorem}[section]

\newtheorem{lemma}[thm]{Lemma}
\newtheorem{prop}[thm]{Proposition}

\newtheorem{cor}[thm]{Corollary}
\newtheorem{corollary}[thm]{Corollary}
\newtheorem{conj}[thm]{Conjecture}
\newtheorem{conjecture}[thm]{Conjecture}
\newtheorem{question}[thm]{Question}

\theoremstyle{remark} 
\newtheorem{rem}[thm]{Remark}
\newtheorem{remark}[thm]{Remark}
 \crefname{rem}{Remark}{Remarks}
 \Crefname{rem}{Remark}{Remarks}

\newtheorem{example}[thm]{Example}

\theoremstyle{definition} 
 
\newtheorem{definition}[thm]{Definition} 

\titleformat*{\section}{\normalsize \bfseries \filcenter}
\titleformat*{\subsection}{\normalsize \bfseries }
\captionsetup{labelsep=period}

\newtheorem{mainthm}{Theorem}
\Crefname{mainthm}{Theorem}{Theorems}

\Crefname{maincor}{Corollary}{Corollaries}
\newtheorem{mainquestion}[mainthm]{Question}
\Crefname{mainquestion}{Question}{Questions}
\newtheorem{mainconj}[mainthm]{Conjecture}
\Crefname{mainconj}{Conjecture}{Conjectures}

\crefformat{equation}{(#2#1#3)}

\makeatletter
\def\namedlabel#1#2{\begingroup
   \def\@currentlabel{#2}
   \label{#1}\endgroup
}
\makeatother

\DeclareFontFamily{U}{mathx}{}
\DeclareFontShape{U}{mathx}{m}{n}{<-> mathx10}{}
\DeclareSymbolFont{mathx}{U}{mathx}{m}{n}
\DeclareMathAccent{\widehat}{0}{mathx}{"70}
\DeclareMathAccent{\widecheck}{0}{mathx}{"71}

\renewbibmacro{in:}{}

 \usepackage{amsmath}

\title{\normalsize \textbf{Relating categorical dimensions in topology and symplectic geometry}}
\author{\normalsize Andrew Hanlon, Jeff Hicks, Oleg Lazarev}
\date{}

\newcommand{\eps}{\varepsilon}

\newcommand{\RR}{\mathbb R}
\newcommand{\ZZ}{\mathbb Z}
\newcommand{\CC}{\mathbb C}

\newcommand{\NN}{\mathbb N}

\newcommand{\LL}{\mathfrak c}

\newcommand{\CP}{\mathbb{CP}}

\newcommand{\into}{\hookrightarrow}
\newcommand{\tensor}{\otimes}

\renewcommand{\Im}{\text{Im}}

\newcommand{\cocore}{\mathfrak{u}}
\newcommand{\stp}{\mathfrak{f}}

\newcommand{\gentime}{\text{\ClockLogo}}

\renewcommand{\mod}{\textbf{mod}}

\DeclareMathOperator{\cuplength}{Cuplength}
\DeclareMathOperator{\Grot}{Grot}
\DeclareMathOperator{\hocolim}{hocolim}

\DeclareMathOperator{\Ddim}{Ddim}
\DeclareMathOperator{\Res}{Res}
\DeclareMathOperator{\Cat}{Cat}
\DeclareMathOperator{\GCdim}{GCdim}
\DeclareMathOperator{\Perf}{Perf}
\DeclareMathOperator{\critval}{cVal}
\DeclareMathOperator{\actval}{aVal}
\DeclareMathOperator{\id}{id}

\DeclareMathOperator{\cone}{cone}

\DeclareMathOperator{\LS}{LS}

\DeclareMathOperator{\Coh}{Coh}
\DeclareMathOperator{\Crit}{Crit}

\DeclareMathOperator{\st}{\; |\; }

\DeclareMathOperator{\Ob}{Ob}

\DeclareMathOperator{\Rdim}{Rdim}

 \addbibresource{preambles/references.bib}

\newcommand{\Addresses}{{\bigskip
  \footnotesize

  \noindent A.~Hanlon, \textsc{Department of Mathematics, University of Oregon}\par\nopagebreak
  \noindent \textit{E-mail address}: \texttt{ahanlon@uoregon.edu}

  \medskip

  \noindent J.~Hicks, \textsc{School of Mathematics and Statistics,  University of St Andrews}\par\nopagebreak
  \noindent \textit{E-mail address}: \texttt{jeff.hicks@st-andrews.ac.uk}

  \medskip

  \noindent O.~Lazarev, \textsc{Department of Mathematics,  University of Massachusetts Boston}\par\nopagebreak
  \noindent \textit{E-mail address}: \texttt{oleg.lazarev@umb.edu}

  \medskip

}}

\begin{document}

\newpage
\setcounter{page}{1}
\maketitle
\begin{abstract} 
We study several notions of dimension for (pre-)triangulated categories naturally arising from topology and symplectic geometry. We prove new bounds on these dimensions and raise several questions for further investigation. For instance, we relate the Rouquier dimension of the wrapped Fukaya category of either the cotangent bundle of a smooth manifold $M$ or more generally a Weinstein domain $X$ to quantities of geometric interest. These quantities include the minimum number of critical values of a Morse function on $M$, the Lusternik-Schnirelmann category of $M$, the number of distinct action values of a Hamiltonian diffeomorphism of $X$, and the smallest $n$ such that $X$ admits a Weinstein embedding into $\mathbb{R}^{2n+1}$. Along the way, we introduce a notion of the Lusternik-Schnirelmann category for dg-categories and construct exact Lagrangian cobordisms for restriction to a Liouville subdomain. 
\end{abstract}
\section{Introduction}
Given a triangulated category $\mathcal{C}$ (with a dg or $A_\infty$ enhancement), it is natural to define integer-valued invariants measuring the complexity of $\mathcal{C}$. Such ``dimensions" might capture how difficult it is to build $\mathcal{C}$ from generating objects or simpler subcategories. For example, the Rouquier dimension $\Rdim(\mathcal{C})$ \cite{rouquier2008dimensions} measures the complexity of $\mathcal{C}$ through the number of cones needed to relate objects. Roughly speaking, $\Rdim(\mathcal{C})$ is the minimum over generators of $\mathcal{C}$ of the maximum number of cones needed to build any object of $\mathcal{C}$ from the given generator up to other natural categorical operations. In another direction, the diagonal dimension $\Ddim(\mathcal{C})$ \cite{ballard2012hochschild} is defined to be the minimal generation time of the diagonal bimodule over $\mathcal{C}$ by product objects. See \cite{elagin2021three} for further discussion of these and other notions of dimension for triangulated categories. Under a variety of hypotheses (e.g., $\mathcal{C}$ is proper and linear over a field), \cite{rouquier2008dimensions,ballard2012hochschild,elagin2021three,bai2021rouquier} have proven that 
$$ \Rdim(\mathcal{C}) \leq \Ddim(\mathcal{C}).$$
Here, we study these dimensions when $\mathcal{C}$ is (the homotopy category of) an $A_\infty$ category naturally associated with a symplectic manifold. Along the way, we also introduce some new topologically inspired notions of categorical dimension. 

To be more precise, we will focus on the wrapped Fukaya category $\mathcal{W}(X)$ of a Weinstein manifold $X$. A Weinstein manifold is a non-compact symplectic manifold constructed via handle attachments, and is the symplectic analog of a CW complex. The Floer-theoretic invariants of Weinstein manifolds are particularly well-behaved. The wrapped Fukaya category was introduced in \cite{abouzaid2010open} with alternate definitions given and properties further described in \cite{sylvan2019partially,ganatra2017covariantly, ganatra2018sectorial}. \citeauthor{bai2021rouquier} \cite{bai2021rouquier} undertook the first comprehensive study of the Rouquier dimension of wrapped Fukaya categories of Weinstein manifolds using arborealization \cite{nadler2017arboreal,alvarez2020positive} as a key input. 

We take a different approach starting from the fundamental example of a Weinstein manifold given by the cotangent bundle $T^*M$ of a smooth manifold $M$. The pre-triangulated closure of $\mathcal{W}(T^*M)$ is quasi-equivalent to the category of perfect modules $\Perf C_{-*} (\Omega M)$ over chains on the based loop space $\Omega M$, or equivalently, the category of $\infty$-local systems \cite{abouzaid2011cotangent,abouzaid2012wrapped}. 
Thus, one expects a purely topological and homotopy invariant answer to the following question.

\begin{mainquestion} \label{mainq:cotangent}
    What are the Rouquier dimension and diagonal dimension of $\mathcal{W}(T^*M) \simeq \Perf C_{-*} (\Omega M)$ for a smooth manifold $M$?
\end{mainquestion}

In \cite{bai2021rouquier} (see also \cite{favero2023rouquier}), the authors show that over a field
\begin{equation} \label{eq:bccotangent}
    \Rdim(\mathcal{W}(T^*M)) \leq \dim(M) 
\end{equation} 
using sectorial descent \cite{ganatra2018sectorial}. In some instances, such as when $M$ is a torus, \eqref{eq:bccotangent} is sharp, but in general, $\Rdim(\mathcal{W}(T^*M))$ and even $\Ddim(\mathcal{W}(T^*M))$ can be smaller. For example, $\Rdim(\mathcal{W}(T^*S^n)) = \Ddim(\mathcal{W}(T^*S^n)) = 1$ for every $n$-dimensional sphere $S^n$ with $n \geq 1$ when working over a field. 

It is also interesting to pose \cref{mainq:cotangent} for an arbitrary Liouville manifold $X$. In general, \cite{bai2021rouquier} uses arborealization \cite{nadler2017arboreal,alvarez2020positive} and sectorial descent \cite{ganatra2018sectorial} to extend the argument for \cref{eq:bccotangent} to obtain
\begin{equation} \label{eq:bcgeneral}
    \Rdim(\mathcal{W}(X)) \leq \dim(X) - 3
\end{equation}
for any polarizable Weinstein manifold $X$ when $\dim(X)  \geq 6$ and 
\begin{equation} \label{eq:bclowd}
    \Rdim(\mathcal{W}(X)) = \dim(X)/2 
\end{equation} 
when $\dim(X) \leq 6$ (when working over a field). We are not aware of a general geometrically meaningful lower bound for $\Rdim(\mathcal{W}(X))$, but \cite[Corollary 1.17]{bai2021rouquier} provides a lower bound in some circumstances.

We take a more direct route to improve \eqref{eq:bccotangent} in general and \eqref{eq:bcgeneral} in certain cases. We additionally extend some applications of \cite{bai2021rouquier}. 
Along the way, we introduce other topologically inspired integral measures of complexity for a (pre-triangulated) $A_\infty$ category, and we generalize \cite[Proposition 1.37]{ganatra2018sectorial} by removing assumptions on a tool for using exact Lagrangian cobordisms in the wrapped Fukaya category.

\subsection{Results}

We now state our main results more precisely. For simplicity, we will assume that we are working over a field throughout this summary, but in the text we provide versions which hold over arbitrary commutative rings. In this section, we will also be ambiguous about grading/orientation data on the wrapped Fukaya category. As above, we will write $\Ddim(\mathcal{C})$ and $\Rdim(\mathcal{C})$ for the diagonal dimension and Rouquier dimension of the idempotent and pre-triangulated closure of an $A_\infty$ category $\mathcal{C}$ for brevity.
We begin with our best upper bounds for \cref{mainq:cotangent}. 

\begin{mainthm}[\cref{cor:Morsebound}, \cref{prop:LSboundLS}] \label{mainthm:cotangent}
    If $M$ is a smooth manifold and $f$ is any Morse function on $M$,
    \begin{equation} \label{eq:ddimcot}
        \Ddim(\mathcal{W}(T^*M)) \leq |\critval(f)| - 1
    \end{equation} 
    where $\critval(f)$ is the set of critical values of $f$ and $|\critval(f)|$ is the number of critical values, and
    \begin{equation} \label{eq:rdimcot}
        \Rdim(\mathcal{W}(T^*M)) \leq \LS(M)
    \end{equation}
    where $\LS(M)$ is the Lusternik-Schnirelmann category of $M$. \end{mainthm}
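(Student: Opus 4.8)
The plan is to leverage the equivalence $\mathcal{W}(T^*M) \simeq \Perf C_{-*}(\Omega M)$ (the category of $\infty$-local systems on $M$) so that both bounds become statements about building the diagonal bimodule, resp. generating the category, using the cell structure of $M$ rather than its symplectic topology. For \eqref{eq:ddimcot}, I would first observe that a Morse function $f$ with critical value set $\critval(f)$ induces a filtration $M = M_N \supset M_{N-1} \supset \cdots \supset M_0$ of $M$ by sublevel sets $M_i = f^{-1}((-\infty, c_i + \epsilon])$ indexed by the critical values $c_0 < c_1 < \cdots < c_{N-1}$, where $N = |\critval(f)|$; passing from $M_{i-1}$ to $M_i$ attaches the (possibly several) cells corresponding to critical points with value $c_i$, so the inclusions $M_{i-1} \hookrightarrow M_i$ give, via cobase change / Thom–Sebastiani-type decomposition, a sequence of $N-1$ distinguished triangles whose cones are supported on the attached cells. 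I would then transport this to the categorical side: the diagonal bimodule of $\Perf C_{-*}(\Omega M)$ is computed by the diagonal $M \hookrightarrow M \times M$, and the Morse filtration of one factor realizes this diagonal bimodule as an iterated cone of $N-1$ product bimodules (cells $\otimes$ restrictions), so its generation time by product objects is at most $N-1 = |\critval(f)| - 1$. This is exactly $\Ddim \le |\critval(f)|-1$.

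For \eqref{eq:rdimcot}, I would use the categorical Lusternik–Schnirelmann machinery introduced in the paper (the ``$\LS$ for dg-categories'' promised in the abstract) together with the geometric fact that an $\LS(M) = k$ structure gives a covering of $M$ by $k+1$ open sets each contractible in $M$. On each such open set the restriction of any $\infty$-local system is a trivial (free) local system, and a Mayer–Vietoris / cosheaf gluing argument over this cover expresses every object of $\Perf C_{-*}(\Omega M)$ as built from the generator (the trivial rank-one local system, i.e., the diagonal/cotangent fiber) in at most $\LS(M)$ cone steps, giving $\Rdim \le \LS(M)$. Concretely I expect this to be phrased as: the categorical LS-category of $\mathcal{W}(T^*M)$ is bounded by $\LS(M)$, and then a general inequality $\Rdim(\mathcal{C}) \le \LS_{\mathrm{cat}}(\mathcal{C})$ proven earlier closes the argument.

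The main obstacle, I expect, is making the open-cover gluing argument precise in the wrapped Fukaya / module-category setting: one needs that restriction to the open subsets $U_i \subset M$ corresponds to a localization (or a sectorial covering) of $\mathcal{W}(T^*M)$ that is compatible with the Rouquier/LS count, and that ``contractible in $M$'' (rather than genuinely contractible) is enough to kill the complexity contributed by each piece — this is the homotopy-invariant subtlety that distinguishes $\LS(M)$ from the number of cells and is what makes \eqref{eq:rdimcot} stronger than \eqref{eq:ddimcot} in general. A secondary technical point is keeping track of idempotent-completion and grading/orientation data so that the cone counts on the topological side match the categorical definitions of $\Ddim$ and $\Rdim$ verbatim; I would handle this by working throughout with the explicit model $\Perf C_{-*}(\Omega M)$ and citing the comparison results \cite{abouzaid2011cotangent, abouzaid2012wrapped}.
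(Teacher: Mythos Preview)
Your approach to \eqref{eq:rdimcot} is essentially the paper's: take an LS cover $\{U_\alpha\}$ of $M$, use sectorial descent to present $\mathcal{W}(T^*M)$ as a homotopy colimit of $\mathcal{W}(T^*U_I)$, observe that nullhomotopy of $U_I\hookrightarrow M$ forces the essential image of each $\mathcal{W}(T^*U_I)\to\mathcal{W}(T^*M)$ to have Rouquier dimension zero (this is exactly where you pass through $\Perf C_{-*}(\Omega-)$), and then invoke the general inequality $\Rdim\le\LS_{cat}$. You have also correctly flagged the main subtlety, that ``contractible in $M$'' rather than ``contractible'' is what is available.

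For \eqref{eq:ddimcot} your route diverges from the paper's, and there is a gap. The paper does not filter algebraically: it identifies the diagonal bimodule with the conormal $N^*\Delta_M\subset T^*(M\times M)$, pushes it off by the Hamiltonian coming from $f$ (extended quadratically off the diagonal), so that the displaced Lagrangian meets the zero section $M\times M$ transversely exactly at the critical points of $f$ with action equal to the critical value, and then applies a Lagrangian cobordism result (\cref{prop:lagrangianCobordism}, which is one of the paper's technical contributions) to produce an honest resolution by product cocores $T^*_qM\times T^*_qM$ of length $|\critval(f)|-1$. The action filtration is what groups critical points by critical value, and the product structure of each term is visible because cocores of $T^*(M\times M)$ are literally products of cotangent fibers.

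Your proposed Morse filtration of the diagonal does not deliver this without further work. Filtering $\Delta_M$ by $\Delta_{M_i}$ gives associated graded pieces that, on the symplectic side, are conormals $N^*\Delta_{e}$ to the attached cells $e$ inside $T^*(M\times M)$; these are \emph{not} product Lagrangians on the nose. You would need a separate argument that each such conormal is isomorphic in $\mathcal{W}(T^*(M\times M))$ to a sum of products $T^*_qM\times T^*_qM$ (plausible via contractibility of $e$, but this is essentially the same nullhomotopy trick you use for \eqref{eq:rdimcot} and is not the one-line ``cells $\otimes$ restrictions'' you wrote). More seriously, you have not explained why the filtration exists at the level of objects in the Fukaya category or in $\Perf(C_{-*}(\Omega M)^e)$: you need actual morphisms $N^*\Delta_{M_{i-1}}\to N^*\Delta_{M_i}$ (or an iterated cone structure on the diagonal bimodule) whose cones are what you claim, and the Morse cofiber sequence on spaces does not tautologically produce this. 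The paper's cobordism argument builds the iterated cone and the product form of its terms in a single geometric step, which is precisely what your sketch is missing.
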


In both \eqref{eq:ddimcot} and \eqref{eq:rdimcot}, the upper bound is witnessed by the generator supported on a cotangent fiber. In \cref{prop:lower_bound_cuplength}, we show that the generation time of this object is bounded below by the cuplength of the cohomology of $M$ as was independently proved in \cite[Theorem 2.14]{favero2023rouquier}.
We note that this bound also has connections to classical results bounding below the number of self-intersection points of the zero section under Hamiltonian perturbation in \cref{subsubsec:cuplength}. 

To prove \eqref{eq:ddimcot}, we give an explicit resolution of the diagonal bimodule by constructing a cobordism using the Morse function $f$. A key point is that this twisted complex is filtered by the critical values of $f$. To deduce that this cobordism induces an isomorphism in the wrapped Fukaya category of the product, we prove a generalization of \cite[Proposition 1.37]{ganatra2018sectorial} in \cref{prop:lagrangianCobordism}. To obtain \eqref{eq:rdimcot}, we employ sectorial descent \cite{ganatra2018sectorial} for a covering of $M$ by null-homotopic sets. Although this strategy is similar to that employed in \cite{bai2021rouquier}, it relies on an additional observation that the Rouquier dimension of a homotopy colimit can be bounded by the generation time for the \emph{essential images} of the constituent pieces. This last point leads us to define the triangulated LS category $\LS(\mathcal{C})$ of an $A_\infty$ category $\mathcal{C}$ roughly as the longest path in a diagram of functors such that the Rouquier dimension of the essential image of each component is $0$ and the diagram computes $\mathcal{C}$ as a homotopy colimit. A similar argument also allows us to bound the diagonal dimension by the LS-category of $M$ in \cref{rem:diagonalLS}.

The bounds from \cref{mainthm:cotangent} are specializations of the following bounds which hold for Weinstein domains. 

\begin{mainthm}[\cref{prop:ddimgeneral},  \cref{eq:lssect}] \label{mainthm:Weinstein}
    If $X$ is a Weinstein domain with skeleton $\mathfrak{c}_X$ and $\phi$ is any Hamiltonian diffeomorphism of $X$ induced by a Hamiltonian $H$ such that the intersection points $\mathfrak{c}_X \cap \phi(\mathfrak{c}_X)$ are transverse and contained in the smooth strata, then 
    \begin{equation} \label{eq:ddimgeneral}
        \Ddim(\mathcal{W} (X)) \leq \left| A_H\left(\mathfrak{c}_X \cap \phi(\mathfrak{c}_X) \right) \right| -1,
    \end{equation}
    where $A_H$ is the action of $H$ (see \eqref{eq:sympact}) and
    \begin{equation} \label{eq:rdimgeneral}
        \Rdim(\mathcal{W}(X)) \leq \LS(\mathcal{W}(X)) \leq \LS_{sect}(X)
    \end{equation}
    where 
    \begin{itemize}
        \item  $\left| A_H\left(\mathfrak{c}_X \cap \phi(\mathfrak{c}_X) \right) \right|$ is the cardinality of the image of $\mathfrak{c}_X \cap \phi(\mathfrak{c}_X) $  under the function $A_H$, and
        \item     $\LS_{sect}(X)$ is the minimal depth of a sectorial cover of $X$ by sectors whose wrapped Fukaya categories have essential image with $\Rdim = 0$ in $\mathcal{W}(X)$. 
    \end{itemize}
\end{mainthm}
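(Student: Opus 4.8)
The plan is to prove the two displayed chains separately, each extending to an arbitrary Weinstein domain a piece of the argument for \cref{mainthm:cotangent}. For \eqref{eq:ddimgeneral} I would produce an explicit twisted complex of product Lagrangians representing the diagonal bimodule and filtered by the values of $H$. Working in the product (Liouville) manifold whose wrapped category carries the diagonal bimodule, the diagonal Lagrangian $\Delta$ corepresents that bimodule and the skeleton of the product is $\mathfrak{c}_X\times\mathfrak{c}_X$, so that the graph $\Gamma_\phi$ meets $\mathfrak{c}_X\times\mathfrak{c}_X$ exactly in the pairs $(x,\phi(x))$ with $x\in\mathfrak{c}_X\cap\phi^{-1}(\mathfrak{c}_X)$; pushing forward by $\phi$ identifies this finite set with $\mathfrak{c}_X\cap\phi(\mathfrak{c}_X)$. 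Using the Hamiltonian isotopy $\phi_t$ from $\id$ to $\phi$, I would then build an exact Lagrangian cobordism from $\Delta$ to a union of product cocores $\cocore_i\times\cocore_j$, one summand per intersection point, by isotoping $\Delta$ to $\Gamma_\phi$ and resolving $\Gamma_\phi$ along the product skeleton; the transversality assumption and the hypothesis that the intersection points lie in the smooth strata guarantee that this resolution is modeled on products of linear cocores, with no contribution from the singular locus of $\mathfrak{c}_X\times\mathfrak{c}_X$. The crucial feature is that the cobordism is compatible with the action filtration and that the ``critical value'' attached to the summand of $(x,\phi(x))$ is $H(x)$, so that collecting summands by $H$-value organizes the twisted complex into $|H(\mathfrak{c}_X\cap\phi(\mathfrak{c}_X))|$ filtration levels. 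Applying the cobordism criterion \cref{prop:lagrangianCobordism} shows $\Delta$ is quasi-isomorphic to the corresponding iterated cone, so the diagonal bimodule is built from product objects using $|H(\mathfrak{c}_X\cap\phi(\mathfrak{c}_X))|-1$ cones, which is \eqref{eq:ddimgeneral}; \eqref{eq:ddimcot} is the case $X=T^*M$ with $\phi$ a transverse perturbation of the time-one flow of the pullback of a Morse function $f$, whose critical values index the filtration.

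For \eqref{eq:rdimgeneral}, the first inequality follows from the general lemma built into the definition of $\LS$: if a pre-triangulated category $\mathcal{C}$ is the homotopy colimit of a diagram of $A_\infty$-categories indexed by a poset of depth $d$ such that the essential image in $\mathcal{C}$ of each vertex has Rouquier dimension $0$, then $\Rdim(\mathcal{C})\le d$. I would prove this by induction on $d$, using at each step the standard behaviour of Rouquier dimension under the homotopy pushouts that assemble the diagram (a category split-generated by full subcategories $\mathcal{A}$ and $\mathcal{B}$ has $\Rdim\le\Rdim(\mathcal{A})+\Rdim(\mathcal{B})+1$) together with the fact that the images of the vertex categories generate the colimit; taking the infimum over admissible diagrams gives $\Rdim(\mathcal{C})\le\LS(\mathcal{C})$, in particular for $\mathcal{C}=\mathcal{W}(X)$. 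For the second inequality, fix a sectorial cover of $X$ of minimal depth $\LS_{sect}(X)$ by sectors whose wrapped categories have essential image of Rouquier dimension $0$ in $\mathcal{W}(X)$. Sectorial descent \cite{ganatra2018sectorial} presents $\mathcal{W}(X)$ as the homotopy colimit over the nerve of this cover of the wrapped categories of the sectors and their iterated sectorial intersections; the nerve has depth equal to the depth of the cover and each intersection maps to $\mathcal{W}(X)$ through one of the prescribed $\Rdim=0$ essential images, so the diagram is admissible for $\LS(\mathcal{W}(X))$ and hence $\LS(\mathcal{W}(X))\le\LS_{sect}(X)$. The cotangent case \eqref{eq:rdimcot} is then recovered from the sectorial cover of $T^*M$ induced by a cover of $M$ by $\LS(M)+1$ null-homotopic open sets.

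The step I expect to be the main obstacle is the construction of the diagonal-resolving cobordism in the singular Weinstein setting and the verification that its bottleneck data is faithfully recorded by the points $\mathfrak{c}_X\cap\phi(\mathfrak{c}_X)$ together with their $H$-values: this is precisely where the transversality and smooth-strata hypotheses are used, and where the removal of the extra assumptions in \cite[Proposition~1.37]{ganatra2018sectorial}, carried out in \cref{prop:lagrangianCobordism}, is needed so that the cobordism actually computes an isomorphism class in the wrapped category of the product. A secondary technical point is checking that the continuation data of the cobordism respects the $H$-filtration, so that the number of filtration levels is exactly $|H(\mathfrak{c}_X\cap\phi(\mathfrak{c}_X))|$ and no larger; by contrast, the homotopy-colimit bookkeeping for \eqref{eq:rdimgeneral} and the reduction of the cotangent cases to the Weinstein ones should be routine once the sectorial-descent input is in place.
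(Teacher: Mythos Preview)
Your proposal is correct and follows essentially the same route as the paper. For \eqref{eq:ddimgeneral} the paper likewise passes to $\Gamma_\phi$ in $X^-\times X$, notes that $H$ is a primitive for the Liouville form on $\Gamma_\phi$, and applies \cref{cor:Weinsteinresolve} (built on \cref{prop:lagrangianCobordism}) to resolve $\Gamma_\phi\simeq\Delta$ by product cocores filtered by the distinct values of $H$; for \eqref{eq:rdimgeneral} the paper proves $\Rdim\le\LS_{cat}$ by the same induction on depth (\cref{prop:TLSCatToRdim}) and obtains $\LS_{cat}\le\LS_{sect}$ from sectorial descent exactly as you describe, using that each $\phi_I$ factors through some $\phi_i$ with $\Rdim(\phi_i)=0$.
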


We similarly have an extension to a certain class of non-transverse intersections in \cref{subsubsec:degenerate}.

In \cref{prop: attaching_sphere_reeb_chords}, we show that the right-hand side of \eqref{eq:ddimgeneral} can be replaced with the action values of the Reeb chords of the attaching Legendrians given a self-indexing Weinstein Morse function on $X$ in a certain class. The bound \eqref{eq:ddimgeneral} is a strengthening of \cite[Proposition 1.14]{bai2021rouquier} where the upper bound is given simply by the number of intersection points. Using \eqref{eq:ddimgeneral} and \eqref{eq:rdimgeneral}, we are also able to improve \cite[Proposition 1.15]{bai2021rouquier} relating categorical dimensions to Lefschetz fibrations on $X$. Note that \cite{giroux2017existence} shows that every Weinstein domain admits a Lefschetz fibration.

\begin{mainthm}[\cref{prop:Lefschetzddim}, \cref{prop: Lefschetz_covering_bound}] \label{mainthm:lef}
    If $\pi \colon X \to \CC$ is a Lefschetz fibration on a Weinstein domain $X$, 
        \[ \Ddim(\mathcal{W}(X)) \leq \critval(\pi)-1 \hspace{1 cm} \text{ and } \hspace{1cm} \LS_{sect}(X) \leq \critval(\pi) -1 \]
    where $\critval(\pi)$ is the number of critical values of $\pi$.
\end{mainthm}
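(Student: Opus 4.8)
The plan is to deduce both inequalities from \cref{mainthm:Weinstein}, using the fact that a Lefschetz fibration presents the Weinstein skeleton of $X$ in a way that is filtered by the critical values of $\pi$. Fix a regular value $* \in \CC$ and embedded arcs $\eta_1, \dots, \eta_m$ from $*$ to the critical points of $\pi$, pairwise disjoint except at $*$, chosen so that arcs running to critical points lying over a common critical value are mutually parallel and $C^0$-close. For a Weinstein structure adapted to $\pi$, the domain $X$ is Weinstein homotopic to one with skeleton $\mathfrak{c}_X = \mathfrak{c}_{X_*} \cup T_1 \cup \dots \cup T_m$, where $\mathfrak{c}_{X_*}$ is the skeleton of the regular fiber $X_*$ over $*$ and $T_i$ is the Lefschetz thimble over $\eta_i$; the bound comes from partitioning the thimbles according to which of the $k := \critval(\pi)$ critical values they lie over.

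For \eqref{eq:ddimgeneral}, I would produce a Hamiltonian diffeomorphism $\phi$ of $X$ whose generating Hamiltonian $H$ takes at most $k$ values on $\mathfrak{c}_X \cap \phi(\mathfrak{c}_X)$. A natural candidate is a cutoff of the pullback to $X$ of a function on the base $\CC$ generating a small rotation, so that $\phi$ moves the fibers while fixing the critical points: then $\phi(\mathfrak{c}_{X_*})$ is disjoint from $\mathfrak{c}_{X_*}$, the thimble $T_i$ meets $\phi(T_j)$ only near the critical value the two share (giving an $H$-value depending only on that critical value) or, for $i = j$, in a controlled perturbation near $*$, and one chooses $H$ so that the value near $*$ agrees with one of the other $k$ values. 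A cleaner implementation uses \cref{prop: attaching_sphere_reeb_chords}: take a self-indexing Weinstein Morse function compatible with $\pi$ whose index-$n$ critical points are precisely the critical points of $\pi$, with attaching Legendrians the vanishing cycles inside the contact boundary of the subcritical piece (a copy of $\partial(X_* \times D^2)$), and a Lefschetz-adapted contact form for which the Reeb chords of those Legendrians have action lying in $k$ windows indexed by the critical values; the Reeb-chord form of \eqref{eq:ddimgeneral} then yields $\Ddim(\mathcal{W}(X)) \le k - 1$.

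For \eqref{eq:rdimgeneral}, I would build an explicit sectorial cover of depth $k - 1$. Cover $\CC$ by $k$ regions $N_1, \dots, N_k$, where $N_j$ is a thickening of the union of the arcs to the critical points over the $j$-th critical value together with a fixed disk $D_*$ about $*$, arranged so that each $N_j$ is a sectorial region of $\CC$ containing exactly one critical value and the preimages $U_j := \pi^{-1}(N_j)$, with the stops induced on $\pi^{-1}(\partial N_j)$, form a sectorial cover of $X$. Since every nonempty subcollection of the $N_j$ has intersection containing $D_*$, the nerve is a full simplex on $k$ vertices and the cover has depth $k - 1$. Finally $U_j$ is the total space of a Lefschetz fibration over a contractible sectorial base carrying a single critical value, so its skeleton retracts onto the thimbles over the arcs in $N_j$; one then argues that the essential image of $\mathcal{W}(U_j) \to \mathcal{W}(X)$ is split-generated with zero cones by the direct sum of those thimbles, hence has $\Rdim = 0$. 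By the definition of $\LS_{sect}$ this gives $\LS_{sect}(X) \le k - 1$, and combined with \eqref{eq:rdimgeneral} this improves the Rouquier-dimension bound of \cite[Proposition 1.15]{bai2021rouquier}.

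The main obstacle in both parts is the symplectic geometry near the critical points of $\pi$, where the skeleton is singular and the sectorial and handle structures are most delicate. For \eqref{eq:ddimgeneral} the difficulty is arranging the perturbation to be transverse with intersection points in the smooth strata (those arising near $*$, at the feet of the thimbles, are the troublesome ones) and with $H$-values, equivalently Reeb-chord actions, genuinely clustering into $k$ groups rather than being smeared out by chords running between thimbles over distinct critical values; this may require a careful choice of the arcs $\eta_i$ or an appeal to the degenerate-intersection extension noted after \cref{mainthm:Weinstein}. For \eqref{eq:rdimgeneral} the difficulty is checking that $\{U_j\}$ is a genuine sectorial cover — in particular that the stops on the higher overlaps, which are preimages of product regions $X_* \times (\text{strip})$, are mutually compatible — and, most importantly, that the essential image has Rouquier dimension exactly $0$ rather than merely being finitely generated.
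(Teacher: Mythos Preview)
Your approach matches the paper's in both parts, but one count needs correcting and one worry dissolves.

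For the diagonal-dimension bound the paper does exactly your second suggestion: apply \cref{prop: attaching_sphere_reeb_chords} to the presentation $X \simeq (F \times D^2) \cup \coprod_{i,j} H^n_{V_i^j \times \theta_i}$ with the vanishing cycles placed at angles $\theta_i = 2\pi i/k$ in $F \times S^1 = \partial(F \times D^2)$. The point you have slightly wrong is the action count. The Reeb flow on $F \times \RR$ is translation, so a chord from $V_i^j \times \theta_i$ to $V_{i'}^{j'} \times \theta_{i'}$ has action $\theta_{i'} - \theta_i$; these are indexed by \emph{differences} of angles, not by individual critical values. Since vanishing cycles over the same critical value are disjoint in $F$ and contribute no chords, the distinct action values number at most $k-1$. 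Adding the single common value coming from the index-$n$ critical points gives $k$ values of $H$, hence $\Ddim \leq k-1$. Your ``$k$ windows indexed by the critical values'' would feed $m=k$ into \cref{prop: attaching_sphere_reeb_chords} and only yield $\Ddim \leq k$.

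For the sectorial bound the paper's sectors are close to your $U_j$ but are defined intrinsically as $X_i = \bigl(F \times D^2 \cup \coprod_j H^n_{V_i^j \times \theta_i},\ F \times (S^1 \setminus Op(\theta_i))\bigr)$ rather than as preimages of base regions. The key simplification you are missing is that $\Perf \mathcal{W}(X_i) \cong \Perf \mathcal{W}(\coprod_j T^*D^n)$, because $X_i$ is obtained from the disjoint thimbles by attaching only \emph{subcritical} handles (equivalently, the skeleton of $X_i$ is $\core_F \cup \coprod_j([0,1]\times V_i^j \cup D^n_{i,j})$ with $\core_F$ subcritical). Thus $\Rdim(\mathcal{W}(X_i)) = 0$ on the nose, not merely for the essential image, which removes the obstacle you flag and also sidesteps the sectorial-compatibility checks for preimage-style overlaps.
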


Finally, we are able to improve \eqref{eq:bcgeneral} under stricter hypotheses by using a generalization of \eqref{eq:bccotangent} to stopped cotangent bundles and known embedding results into cotangent bundles. 

\begin{mainthm}[\cref{cor:trivialembed}] \label{mainthm:embed}
    Suppose that $X$ is a Weinstein domain admitting a Lagrangian distribution $E$ on its tangent bundle that is a trivial vector bundle. If $\mathcal{W}(X; E)$ is the wrapped Fukaya category defined with the grading/orientation data determined by $E$, then 
        $$ \Rdim ( \mathcal{W}(X; E)) \leq \frac{\dim(X)}{2} + 1 .$$
\end{mainthm}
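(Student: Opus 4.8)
The plan is to reduce to the cotangent-bundle estimate \eqref{eq:bccotangent} by embedding $X$ into a stopped cotangent bundle and then transporting a Rouquier-dimension bound along a Viterbo restriction functor; triviality of the Lagrangian distribution $E$ is precisely what makes both the embedding available and the grading/orientation data standard. Write $\dim X = 2n$, so the target bound is $n+1$. First I would invoke the known embedding results for polarized Weinstein domains into cotangent bundles to realize $X$ as a Weinstein subdomain of a stopped cotangent bundle $(T^*N)_\Lambda$, where $N$ is a closed $n$-manifold and $\Lambda \subset \partial_\infty T^*N$ is a stop. Triviality of $E$ enters twice: it is what makes the embedding available, and it lets one arrange that the canonical Lagrangian polarization of $T^*N$ pulls back along the embedding to data equivalent to that determined by $E$, so the wrapped category produced below is genuinely $\mathcal{W}(X;E)$ rather than another twist of the wrapped category. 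I would also need that $\Lambda$ is of a restricted type: a neighborhood of $\Lambda$ inside $(T^*N)_\Lambda$ can be taken to be a Liouville sector whose wrapped category has essential image of Rouquier dimension $0$ in $\mathcal{W}((T^*N)_\Lambda)$.

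Next I would establish a stopped version of \eqref{eq:bccotangent}, namely $\Rdim \mathcal{W}((T^*N)_\Lambda) \le \LS(N) + 1 \le \dim N + 1$, by running the sectorial-descent proof of \eqref{eq:bccotangent} and \eqref{eq:rdimcot} with one extra piece for the stop. Cover $N$ by $\LS(N)+1$ null-homotopic open sets $U_i$ and form the sectorial cover of $(T^*N)_\Lambda$ consisting of the sectors $T^*U_i$ together with one sector absorbing a neighborhood of $\Lambda$. As in the proof of \eqref{eq:rdimcot}, each $\mathcal{W}(T^*U_i)$ has essential image with $\Rdim = 0$ because $U_i \hookrightarrow N$ is null-homotopic, and the stop sector does too by the choice of $\Lambda$; the nerve of this cover has depth at most $\LS(N)+1$. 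Feeding this cover into the homotopy-colimit estimate for $\LS_{sect}$ underlying \cref{mainthm:Weinstein} then gives $\Rdim \mathcal{W}((T^*N)_\Lambda) \le \LS_{sect}((T^*N)_\Lambda) \le \LS(N) + 1 \le n+1$.

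Finally I would transport the bound. The Viterbo restriction functor $\mathcal{W}((T^*N)_\Lambda) \to \mathcal{W}(X;E)$ attached to the Weinstein subdomain $X$ is a Verdier quotient, namely the quotient by the subcategory generated by the cocores of the handles of $(T^*N)_\Lambda$ lying in the complement of $X$. Since the image of a generator under an exact, essentially surjective functor is again a generator with no larger generation time, $\Rdim$ cannot increase under such a quotient, so $\Rdim \mathcal{W}(X;E) \le \Rdim \mathcal{W}((T^*N)_\Lambda) \le n+1 = \dim(X)/2 + 1$. Over an arbitrary commutative ring the argument is the same once the grading/orientation data is carried throughout, since all of the descent and functoriality inputs used are valid with coefficients.

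The load-bearing step is the embedding: one needs the embedding of a polarized Weinstein domain into a stopped cotangent bundle whose stop $\Lambda$ is simple enough that its neighborhood sector does not reintroduce the full complexity of $X$ — otherwise the estimate of the second step is vacuous — together with a careful comparison of the grading/orientation data determined by $E$ with the standard data on $T^*N$. On the symplectic side, the remaining check is the bookkeeping of the second step, that adding the stop sector raises the nerve depth by exactly one while preserving the property that the essential image has $\Rdim = 0$; this rests on the sectorial descent package underlying \cref{mainthm:Weinstein}.
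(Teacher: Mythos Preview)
Your proposal has a genuine gap at precisely the step you flag as load-bearing: you assume the stop $\Lambda$ arising from the embedding can be taken so that a neighborhood sector of $\Lambda$ has essential image with $\Rdim=0$ in $\mathcal W((T^*N)_\Lambda)$. In the embeddings that are actually available here, this fails. The relevant embedding (via the h-principle of \cite[Lemma~7.6]{ganatra2018microlocal} and the doubling trick of \cite[Example~10.7]{ganatra2018sectorial}) realizes $X^{2n}$ as a Weinstein \emph{hypersurface} in the contactization of $T^*\RR^n$, and the stop one places on $T^*(\RR^n\times(0,1))$ is the double $D_X$ of (a stabilization of) $X$. The wrapped category of a neighborhood of this stop is essentially $\mathcal W(X)$ again, so its essential image carries the full complexity of $X$; your ``add one sector for the stop'' step then contributes $\Rdim(\mathcal W(X))$ rather than $0$, and the inequality becomes circular. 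There is also a dimension mismatch: you posit a codimension-zero subdomain embedding $X\hookrightarrow T^*N$ with $\dim N=n$ and then a Viterbo quotient, but no such embedding is produced by the polarization hypothesis; what one gets is a hypersurface embedding into a cotangent bundle with $(n{+}1)$-dimensional base, and the resulting functor is a quasi-equivalence $\mathcal W(X;E)\simeq \mathcal W(T^*(\RR^n\times(0,1)),D_X)$, not a localization onto $\mathcal W(X)$.

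The paper sidesteps the stop-complexity problem entirely by proving a bound on stopped cotangent bundles that is \emph{uniform in the stop}: for any subanalytic stop $\stp$ on $T^*M$ one has $\Rdim(\mathcal W(T^*M,\stp))\le\dim M$. The argument refines $\stp$ to the conormal stop of a triangulation of $M$; the small positive pushoffs of cotangent fibers then generate and form a posetal category of depth $\dim M$ \cite[\S 5.8]{ganatra2018microlocal}, after which one localizes back to $\stp$. Since this bound does not care how complicated the stop is, one may take the stop to be $D_X$, apply the quasi-equivalence above, and read off $\Rdim(\mathcal W(X;E))\le \dim(\RR^n\times(0,1))=n+1$. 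Your LS-cover idea for the unstopped $T^*N$ is fine, but the extension to arbitrary stops requires this triangulation/posetal argument rather than an extra sector of small essential image.
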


\subsection{Further context and questions}

The results of the previous section suggest several avenues for further research and have connections to other interesting problems.

There are classical relations among the upper bounds in \cref{mainthm:cotangent}. For instance, it is well-known \cite{james1978category} that the minimum number of critical points of a smooth function on $M$ is closely related to the Lusternik-Schnirelmann category $\LS(M)$ of $M$.  
In fact, there are upper bounds \cite{james1978category} 
\begin{align*}
\LS(M)\le \min_{f:M\to \RR} |\Crit(f)| - 1  && \LS(M)\leq \min_{f:M\to \RR \text{ Morse}} |\critval(f)|
\end{align*}
For brevity of notation, let $\mathcal C(M) := \Perf C_* (\Omega M)$ be the category of perfect modules over chains on the loop (or equivalently path) space of $M$ over a field. 
\begin{table}
    \centering
    \begin{tikzpicture}[scale=1.5]

\node (v4) at (-1.5,1.5) {$\LS(M)$};
\node (v2) at (-5.5,1.5) {$\LS_{cat}(\mathcal C(M))$};
\node (v5) at (2,1.5) {$\min_{f \text{ Morse}}|\critval{f}|$};
\node (v3) at (2,-1.5) {$\text{Ddim}(\mathcal C(M))$};
\node (v1) at (-5.5,-1.5) {$\text{Rdim}(\mathcal C(M))$};
\node (v6) at (-1.5,2.5) {$\GCdim(M)$};
\node (v7) at (2,2.5) {$\text{dim}(M)$};
\draw  (v1) edge[->] node[fill=white] {\cref{prop:TLSCatToRdim}} (v2);
\draw  (v1) edge[->] node[fill=white, below] {\cref{prop:diagGenerationToGeneration}} (v3);
\draw  (v2) edge[->] node[fill=white, above] {\cref{prop:LSboundLS}}(v4);
\draw  (v4) edge[->] (v5);
\draw  (v5) edge[->] (v7);
\draw  (v6) edge[double equal sign distance] (v7);
\node (v8) at (-5.5,2.5) {$\GCdim_{cat}(\mathcal C(M))$};
\draw  (v2) edge[->] (v8);
\draw  (v8) edge[->] (v6);
\node (v10) at (-1.5,0) {$\cuplength(M)$};
\node (v9) at (2,0) {$\gentime_{C_*(\Omega M)}(\mathcal C(M))$};
\draw  (v3) edge[->] (v9);
\draw  (v9) edge[->] node[fill=white] {\cref{cor:genTimeCocore}}  (v5);
\draw  (v10) edge[->] (v4);
\draw  (v10) edge[->] node[below] {\cref{prop:lower_bound_cuplength}} (v9);
\draw (v2) edge[bend right=20, <->, dashed] (v3);
\draw (v1) edge[double equal sign distance, dashed] (v10);
\end{tikzpicture}     \caption{Relating various forms of measures of complexity for categories and manifolds (over a field). Here, arrow $N_1 \to N_2$ indicates that $N_1 \leq N_2$. We conjecture that relations hold along the dashed edges.}
    \label{tab:comparingDimensions}
\end{table}
\Cref{tab:comparingDimensions} encapsulates some classical relations and the bounds given by \cref{mainthm:cotangent}. 
In addition to the invariants introduced in the previous section, this diagram includes the generation time of $\mathcal{C}(M)$ by $C_*(\Omega M)$ as a module over itself, and the triangulated good covering dimension which is defined in \cref{subsec:lsUpperBound} and is analogous to the triangulated LS category where instead the components of the diagram themselves are required to have zero Rouquier dimension. 
These comparisons to topology naturally lead one to expect an inequality involving $\LS(\mathcal{C})$ and $\Ddim(\mathcal{C})$ for any triangulated category $\mathcal{C}$. 
\begin{mainquestion}
    \label{question:ddimvsLS}
    Is there a relationship between $\LS(\mathcal{C})$ and $\Ddim(\mathcal{C})$ valid for any triangulated category $\mathcal{C}$?
\end{mainquestion}

In addition, we conjecture that \cref{prop:lower_bound_cuplength} can be improved to a lower bound for the Rouquier dimension.

\begin{mainconj}[\cref{conj:cuplength}]
    If $M$ is a smooth manifold and $\cuplength(M)$ is the cuplength of $H^*(M; k)$ for any field $k$, then
        $$ \cuplength(M) \leq \Rdim(\mathcal{C}(M)). $$
\end{mainconj}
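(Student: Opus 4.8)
The plan is to lower-bound the Rouquier dimension of $\mathcal{C}(M) = \Perf C_*(\Omega M)$ by the cuplength of $H^*(M;k)$, sharpening the statement of \cref{prop:lower_bound_cuplength} (which bounds from below the generation time of the cotangent fiber, hence $\Ddim$, but not a priori $\Rdim$, since $\Rdim$ takes an infimum over all generators). The central issue is that a different generator of $\mathcal{C}(M)$ might build objects faster than the cocore/fiber does. So the strategy must be: find a numerical invariant of a triangulated category that (a) is an honest lower bound for $\Rdim$ — i.e., it is subadditive under cones, shifts, summands, and generation steps, so it behaves like a ``ghost''-style obstruction that is insensitive to the choice of generator — and (b) is at least $\cuplength(M)$ on $\mathcal{C}(M)$.

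\textbf{The candidate invariant.} I would use a Floer/Morse-theoretic or Hochschild-homology-theoretic length filtration. Concretely, $HH_*(\mathcal{C}(M)) \cong H_*(\mathcal{L}M)$ (free loop space homology), and there is a product structure (the Chas–Sullivan or equivalently cohomology cup product seen through $H^*(M) \hookrightarrow$ something). Alternatively, and more directly in the spirit of the surrounding paper, use that $\mathcal{C}(M)$ acts on $C_*(\Omega M)$-modules arising from the diagonal and that the cup-length obstruction is a statement about how many cones are needed to kill a product of $d$ cohomology classes of positive degree. The classical mechanism (as in the Ljusternik–Schnirelmann / Rudyak / Dranishnikov framework, and as used in \cref{subsubsec:cuplength}) is: if $u_1 \smile \cdots \smile u_d \neq 0$ with $\deg u_i > 0$, then a certain object cannot be built in fewer than $d$ cones \emph{from any generator}, because each $u_i$ is a ``phantom-detectable'' obstruction class and $d$ of them multiply nontrivially. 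I would formalize a lemma: there is a function $\ell \colon \Ob(\mathcal{C}(M)) \to \ZZ_{\geq 0}$ with $\ell(\text{cone}(f)) \leq \ell(A) + \ell(B)$ for $f\colon A \to B$ (up to the usual $+1$ conventions), $\ell$ invariant under shifts and passing to summands, $\ell(\text{generator}) = 0$, and $\ell(\text{diagonal object, or the relevant test object}) \geq \cuplength(M)$; then any generator needs $\geq \cuplength(M)$ cones, giving $\Rdim \geq \cuplength(M)$.

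\textbf{Key steps, in order.} First, identify the right test object: I expect the diagonal bimodule $\Delta$ over $\mathcal{C}(M)$, or the structure sheaf of the zero section, to be the object detecting the full cuplength — this parallels \cref{prop:lower_bound_cuplength} but phrased so the bound is generator-independent. Second, construct the length function $\ell$: the cleanest route is via a spectral sequence or a multiplicative structure on $\Hom$-complexes in which $H^{>0}(M;k)$ appears as a nilpotent ideal, so that $\ell(A)$ = largest $d$ with a nonzero product of $d$ positive-degree classes acting nontrivially on (a morphism to/from) $A$; then subadditivity under cones is the standard argument that a product landing in degree-$d$ filtration survives a cone only if it splits as a product across the two pieces. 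Third, verify the three formal properties of $\ell$ (shift-, summand-, and cone-behavior) — these are routine once $\ell$ is set up as a filtration-length. Fourth, compute $\ell$ on the cotangent fiber (it should be $0$, since the fiber generates and the relevant products vanish on it — indeed $H^*(\text{pt})$ has trivial cuplength) and on $\Delta$ (it should be $\geq \cuplength(M)$, using $HH^*(\mathcal{C}(M)) \supseteq H^*(M;k)$ and the ring structure). Conclude.

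\textbf{Main obstacle.} The hard part will be step two: making the length function $\ell$ genuinely well-defined on \emph{all} objects and genuinely subadditive under cones, not merely on objects reached from a fixed generator. The cuplength lower bound for $\Ddim$ (i.e., for the cocore's generation time) is comparatively easy because one only tracks a fixed filtration along a fixed tower of cones; upgrading to $\Rdim$ requires the obstruction to be intrinsic — a property of the object itself, like a support-theoretic or Hochschild-cohomological invariant — so that it cannot be dodged by choosing an exotic generator. I anticipate this forces working with the $A_\infty$-structure on $C_*(\Omega M)$ and a Massey-product / $A_\infty$-filtration refinement of cuplength (the ``category weight'' or ``module weight'' in the sense of Rudyak and Strom), and checking that this refined invariant is additive under cones — which is exactly the subtle point where higher products could in principle reduce the count. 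If that additivity fails at the $A_\infty$ level, the conjecture may genuinely be open (which is consistent with the excerpt presenting it as a conjecture rather than a theorem), and the honest statement of the proof plan is that this step is where a new idea is needed.
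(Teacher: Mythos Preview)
The statement you are attempting to prove is presented in the paper as a \emph{conjecture} (Conjecture~\ref{conj:cuplength}), not a theorem; the paper contains no proof of it. The authors only prove the weaker \cref{prop:lower_bound_cuplength}, which bounds the generation time \emph{by the cotangent fiber specifically}, and they explicitly flag the passage to arbitrary generators as open. So there is nothing to compare your proposal against --- you are trying to settle an open problem.

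That said, your diagnosis of the difficulty is accurate, and your final paragraph is the honest one. The ghost-lemma argument in \cref{prop:lower_bound_cuplength} works because the maps $a_i \colon M \to M$ are ghost \emph{with respect to the fiber} $T^*_qM$: they induce zero on $WH^*(T^*_qM, -)$ for degree reasons. For an arbitrary generator $G$, there is no a priori reason the $a_i$ are $G$-ghost, so the ghost lemma gives nothing. Your proposed fix --- an intrinsic length function $\ell$ with $\ell(G)=0$ for every generator, subadditive under cones, and $\ell(\Delta)\ge \cuplength(M)$ --- is internally in tension: if $\ell$ is truly intrinsic (independent of $G$), then ``$\ell(G)=0$ for every generator'' is a strong constraint that you have not justified, and indeed may fail (a generator could itself carry nontrivial cup-length-type data). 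The ``category weight'' invariants you allude to are again defined relative to a reference map or class, so they do not obviously escape this circularity. In short, you have correctly located the gap, but the mechanism you sketch does not close it; the conjecture remains open as the paper says.
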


Turning back to the more general case of a Liouville manifold $X$, it would interesting to have examples where the categorical dimensions are large.

\begin{mainquestion} \label{mainq:examples}
    Are there Liouville manifolds with $\Rdim(X) > \dim(X)/2$? Further, are there examples where $X$ is Weinstein and/or polarized?
\end{mainquestion}

Of course, \cref{mainq:examples} can also be posed for diagonal dimension and triangulated Lusternik-Schnirelmann category. We have focused on the Rouquier dimension and formulated the question in terms of $\dim(X)/2$ as it is particularly relevant in the context of homological mirror symmetry \cite{kontsevich1995homological} and Orlov's conjecture \cite{orlov2009remarks} that 
$$ \Rdim(D^b\Coh(Y)) = \dim(Y) $$
for any smooth quasi-projective scheme $Y$. The interplay between homological mirror symmetry and Orlov's conjecture as well as many known instances are detailed in \cite[Section 1.2]{bai2021rouquier}. In fact, \cite{bai2021rouquier} used \eqref{eq:bclowd} and known instances of homological mirror symmetry to establish new cases of Orlov's conjecture. In addition, the methods for proving the first part of \cref{mainthm:cotangent} have been translated to the mirror in the case $M$ is a torus to give a resolution of the diagonal for all toric varieties \cite{hanlon2023resolutions} which implies Orlov's conjecture for all toric varieties. One can also prove Orlov's conjecture directly using \cref{eq:bccotangent} and homological mirror symmetry for toric varieties as in \cite{favero2023rouquier}.

As we know of no compelling reason for there to be an obstruction to the existence of polarized Weinstein manifolds with highly singular mirror varieties, it seems reasonable to believe that \cref{mainq:examples} has a positive answer. On the other hand, \eqref{eq:bcgeneral}, \eqref{eq:bclowd}, and \cref{mainthm:embed} seem to suggest that the answer could be in the negative, which would be particularly interesting in the context of Orlov's conjecture.
\subsection{Outline}

The remainder of the paper is organized as follows. After giving brief motivation from manifold topology, \cref{sec:categorical} introduces the notions of categorical dimension that we consider along with some of their basic relations. In particular, the precise definitions of Rouquier dimension, diagonal dimension, and triangulated LS category appear in \cref{subsec:rdimbackground}, \cref{subsec:ddimbackground}, and \cref{subsec:triangLS}, respectively. \cref{sec:symplecticTools} introduces the necessary background on wrapped Fukaya categories and develop the symplectic tools that we use to bound the categorical invariants. In \cref{sec:applications}, we use this technology to obtain geometric bounds on categorical dimensions of wrapped Fukaya categories generally working first with $T^*M$ before generalizing to Weinstein manifolds in each case. We then use embeddings into cotangent bundles to bound the Rouquier dimension of wrapped Fukaya categories in \cref{sec:embedding}. Finally, \cref{subsec:conjectures} contains various conjectures and questions on extensions of the results obtained in the previous sections. 

\subsection{Acknowledgements}

The authors thank Mohammed Abouzaid, Denis Auroux, Laurent C\^{o}t\'{e}, Alexey Elagin, Yakov Eliashberg, David Favero, Sheel Ganatra, Jesse Huang, Wenyuan Li, John Pardon, Daniel Pomerleano, Nick Sheridan, and Abigail Ward for useful discussions.
We are also grateful to the anonymous referee for several suggestions and comments that improved the paper.
This is the ``A-side" of a project (whose ``B-side" is \cite{hanlon2023resolutions}) that began at the workshop ``Recent developments in Lagrangian Floer theory" at the Simons Center for Geometry and Physics, Stony Brook University. We thank the Simons Center and the workshop organizers for a stimulating scientific environment. In addition, part of this project was completed while AH and JH were participants in the SQuaRE program at the American Institute for Mathematics. We thank AIM for their support.

AH was supported by  NSF RTG grant DMS-1547145 and by the Simons Foundation (Grant Number
814268 via the Mathematical Sciences Research Institute, MSRI). JH was supported by EPSRC Grant EP/V049097/1.

\section{Categorical setup} \label{sec:categorical}
    In this section, we define the various categorical invariants that appear in the text. 
\subsection{Context: constructing a manifold ``efficiently"}
Before we discuss the measures of the complexity of a category, we discuss some ways to measure the complexity of a manifold that inspire how we interpret the various categorical dimensions.

We understand that the ``complexity'' of $M$ should measure how we can construct the manifold in an ``efficient'' manner.
Two natural ways to present a smooth manifold are via an open cover or via a handlebody decomposition.
\subsubsection{Open covers}
When covering a space, we have several different metrics of efficiency. First, we should assume that we cover the space with ``simple'' pieces. Secondly, we should try to cover our space with a minimal number of pieces or with the smallest amount of overlap. By varying these parameters, we get different measures of the complexity of a topological space.

A classical measure of the complexity of the space is the Lebesgue covering dimension, which is the smallest number $n$  so that every cover admits a refinement of ply $n+1$.
The ply of a cover is the smallest $k$ so that $\bigcap_{i\in I} U_i=\emptyset$ whenever $|I|>k$.
For a smooth manifold $M$, this is always the dimension of $M$. Observe that the Lebesgue covering dimension is not a homotopy invariant.
The Lebesgue covering dimension on metric spaces provides an upper bound for the good covering dimension, which is the minimal depth $k$ over all good covers of $M$. 
The existence of such a cover implies that the \v{C}ech cohomology groups of the constant sheaf are supported only in degrees $0$ to $k$. 

Another interesting measure of covering is the Lusternik-Schnirelmann category $\LS(M)$, which is the smallest number of open sets $U_i$ that cover $M$ such that the inclusions $\phi_i: U_i \hookrightarrow M $ are all null-homotopic. We note that the open set $U_i$ may be disconnected and count as a single open set. It is a theorem that $\LS(M)$  depends just on the homotopy type of $M$ \cite{james1978category}. It is known that the cuplength (with any coefficients) of $M$ is a lower bound for $\LS(M)$.

\subsubsection{Handlebody Decompositions} When presenting a space via a handlebody or a CW-decomposition, one can measure the complexity by trying to minimize the number of gluings or handle attachments required to build the space. 
For a manifold $M$, handle decompositions are described by Morse functions, where each handle corresponds to a critical point. Using Morse cohomology, we obtain the bound
\[\sum_i b_i(M) \leq \min_{f:M\to \RR \text{ Morse}}|\Crit(f)| \]
showing that the Betti numbers of $M$ provide a lower bound for the number of critical points of a Morse function. One could also ask for the minimal number of simultaneous handle attachments that are required to construct $M$, which corresponds to the number of critical values of $f$. We have bounds 
\[\cuplength(H^*(M)) \leq \min_{f:M\to \RR \text{ Morse }} |\critval(f)| \leq \dim(M)\]
where the lower bound follows from the fact that the cup product (with a non-minimal critical point) increases the critical value, and the upper bound follows from the existence of self-indexing Morse functions. 

In the world of CW complexes, the cone length of a space $M$ is defined to be the smallest value $k$ so that we have an iterated cofiber sequence
\[N_i\to M_i\to M_{i+1}\]
so that we start with $M_0=\{\bullet\}$ a point and end with $M_k=M$. The definition depends on which choices we allow for the $\{N_i\}$; Cornea \cite{cornea95lscategory} considers the case where you are allowed to attach along $i$-fold suspensions $Z_i=\Sigma^iU_i$ of arbitrary spaces $U_i$.
When $M$ is a manifold, it admits a CW presentation of length $\dim M$ showing that the cone-length of a manifold is at most $\dim (M)$. 
\subsubsection{Relations}
The relations between the bound on complexity that one obtains from covering versus handlebody decomposition are included in \cref{tab:comparingDimensions}. A more restricted version of the LS category (called the strong LS category) was shown to be equal to the cone length by \cite{cornea95lscategory}. It differs from the LS category by at most one.  The LS category is known to be a lower bound for the minimum number of critical points of a function with isolated critical points \cite{fox1941lusternik}. Note that the number of critical values of a Morsification of a degenerate critical point can be arbitrarily large \cite[Corollary 5.25]{pears1994degenerate}. This shows that locally one can find a large difference between the LS-category and the minimal number of critical values of a Morse function. However, we did not find a result in the literature producing this difference globally.
\begin{question}
    Does there exist a sequence of \emph{closed} manifolds $M_n$ such that the quantity 
    \[\frac{\min_{\substack{f:M_n\to \RR\\ \text{$f$ is Morse}}} |\critval(f)|}{\min_{\substack{g:M_n\to \RR\\ \text{$g$ has isolated critical points}}} |\critval(g)|} \]
    is unbounded?
\end{question}
Additionally, we could not find a bound (or an example suggesting the nonexistence of a bound) between the LS category and the minimum number of critical values of a function with isolated critical values. A detailed discussion is given in \cite[Section 5]{pears1994degenerate}.
Returning to cuplength, we note that the cuplength of $H^*(M)$ is a lower bound for the LS category of $M$ \cite[Theorem 28.1]{fox1941lusternik}. \subsection{Rouquier Dimension} \label{subsec:rdimbackground}
We now turn to the definitions and basic properties of the categorical invariants that we study. We work with $A_\infty$ categories, as opposed to simply triangulated categories. Several of the invariants we consider require an $A_\infty$ (or dg) enhancement, and $A_\infty$ categories arise naturally in our applications. For the remainder of \cref{sec:categorical}, we will denote by $\mathcal{C}$ an $A_\infty$ category linear over a commutative ring $R$ and set  $\Perf \mathcal{C}$ to be the idempotent-completed pre-triangulated closure of $\mathcal{C}$. Similarly, we let $\Perf R$ be the dg category of perfect complexes of $R$-modules. 

We begin by defining the Rouquier dimension after \cite{rouquier2008dimensions} and give some mild generalizations of the definition.
Given two full subcategories $ \mathcal{G}_1,  \mathcal{G}_2$ of $\Perf \mathcal C$, we define $\mathcal{G}_1 * \mathcal{G}_2$ to be the full subcategory on objects $G$ that sit in a distinguished triangle of the form $G_1 \to G \to G_2 \to G_1[1]$ with $G_i \in \mathcal{G}_i$. For any full subcategory $\mathcal{G}$ of $\Perf \mathcal{C}$, we define $\langle \mathcal{G} \rangle$ to be the smallest full subcategory of $\Perf \mathcal C$ that contains $\mathcal G$ and is closed under quasi-isomorphisms, direct summands, direct sums, and shifts.

We will need an alternate version of this construction which is not as sensitive to the choice of ground ring $R$. 
For any $P \in \Perf R$ and $A \in \Perf \mathcal C$, we can form the tensor product $P \tensor A \in \Perf \mathcal C$. 
We define $\langle \mathcal{G}\rangle^R$ to be the smallest full subcategory of $\Perf \mathcal C$ that contains $\mathcal G$ and is closed under direct summands, sums, isomorphisms, shifts, and tensor products with $\Perf R$. 
When the ground ring $R$ is a field, observe that $\langle \mathcal G \rangle = \langle \mathcal G\rangle^R$ as tensoring with a finite dimensional vector space can be represented by a direct sum. 

Now, given any full subcategory $\mathcal G$ of $\Perf \mathcal C$, we construct a sequence of full subcategories 
$$\langle \mathcal G \rangle_0, \langle \mathcal G \rangle_1, \langle \mathcal G \rangle_2, \ldots $$ 
inductively as follows. 
We define
$\langle \mathcal \mathcal{\mathcal{G}} \rangle_{0}=0$ 
and set 
\[\langle \mathcal{G} \rangle_k = \langle \langle \mathcal{G} \rangle_{k-1} * \langle \mathcal{G} \rangle \rangle.\]
If $\mathcal{G}$ and $\mathcal{E}$ are full subcategories of $\mathcal{C}$ and there is a $k$ such that $\mathcal{E} \subset \langle \mathcal G \rangle_k$, we define 
$$\gentime_{\mathcal{G}} (\mathcal{E}) = \min \left\{ k \st \mathcal{E} \subset \langle \mathcal G \rangle_{k+1} \right\} $$
and call this the \emph{generation time} of $\mathcal{E}$ by $\mathcal{G}$.
We say that $\mathcal{G}$ \emph{generates}\footnote{In some literature, particularly on $A_\infty$ categories in symplectic geometry, what we call generation is called \emph{split-generation} to demarcate that we have allowed splitting off of direct summands. Further, what we call resolving an object is sometimes referred to as generating the object.} 
$\mathcal{C}$ if
$$ \Perf \mathcal{C} = \bigcup_{k \geq 0} \langle \mathcal{G} \rangle_k. $$
We can apply the same construction to any object $G$ of $\mathcal{C}$ by conflating it with the full subcategory consisting of that object. The same construction can be used to define $R$-generation time, where $\langle \mathcal{G} \rangle_k^R = \langle \langle \mathcal{G} \rangle_{k-1}^R * \langle \mathcal{G} \rangle^R\rangle^R$ and 
$$\gentime_{\mathcal{G}}^R (\mathcal{E}) = \min \left\{ k \st \mathcal{E} \subset \langle \mathcal G \rangle_{k+1} ^R\right\}. $$ 

Finally, we can define the Rouquier dimension.

\begin{definition}[Rouquier dimension and variations]
    The \emph{Rouquier dimension} of $\mathcal{C}$, $\Rdim(\mathcal{C})$, is the smallest non-negative integer $k$ such that there is an object $G$ of $\Perf \mathcal{C}$ satisfying $\gentime_G(\mathcal C) = k$.
    If no such $k$ exists, we set $\Rdim(\mathcal{C}) = \infty$. 
    
    For an $A_\infty$ functor $j: \mathcal B\to \mathcal C$, we write $\Rdim(j)$ for the smallest non-negative integer $k$ so that there exists an object $G$ of $\Perf \mathcal C$ satisfying $\gentime_G(\Im(j))=k$ where $\Im(j)$ is the essential image (which may itself not be triangulated).

    If $\mathcal C$ is linear over $R$, define the $R$-Rouquier dimension to be the smallest $k$ such that there is an object $G$ of $\Perf \mathcal C$ such that $\gentime_G^R(\mathcal C)=k$; we similarly define $\Rdim^R(j)$ for an $A_\infty$ functor $j: \mathcal B\to \mathcal C$.
\end{definition}
\begin{example} \label{ex:RdimZ}
    If $R$ is a field, the generation time and Rouquier dimension agree with $R$-generation time and $R$-Rouquier dimension. However, for general rings this may not be the case. For example, when $R=\ZZ$, we have that $\Rdim(\Perf \ZZ)=1$ by the classification of finitely generated abelian groups. By definition, $\Rdim^R(\Perf \ZZ)=0$ as the smallest category containing $\ZZ$ and closed under tensor products by objects of $\Perf \ZZ$ is $\Perf \ZZ$ itself.
\end{example}

\begin{rem} 
    The Rouquier dimension can be defined for triangulated categories without any extra structure as in \cite{rouquier2008dimensions}, and $\Rdim(\mathcal C)$ is an invariant of the triangulated category $H^0 (\Perf C)$. However, $\Rdim^R(\mathcal{C})$ relies on the $A_\infty$ structure to define the tensor product and we do not know if there is a triangulated analog.
\end{rem}

\begin{question} \label{q:rdimvsrdimr}
    What is the relation between $\Rdim(\mathcal C)$ and $\Rdim^R(\mathcal C)$?
\end{question}
Because $\langle G\rangle^R\subset \langle G\rangle_{\gentime_R(\Perf R)}$, we can obtain the na\"ive bound
\begin{equation}
    \Rdim^R(\mathcal C) \leq \Rdim(\mathcal C)\leq (\gentime_R(\Perf R)+1)\cdot \Rdim^R(\mathcal C).
    \label{eq:badRdimRbound}
\end{equation}
However, evidence (\cref{prop:weakestDdimbound}) suggest that under some appropriate conditions we might expect $\Rdim(\mathcal C) \leq \Rdim^R(\mathcal C)+\gentime_R(\Perf R)$. 

We use the symbol $[G_1\xrightarrow{f_1}G_0]$ to denote the mapping cone of a morphism $f_1 \colon G_1\to G_0$ in $\Perf \mathcal C$. Furthermore, the notation $[G_{k}\xrightarrow{f_k} G_{k}\to \cdots \xrightarrow{f_1} G_0]$ represents an iterated mapping cone in $\Perf \mathcal C$, where each $f_i$ corresponds to a morphism from $G_{i}$ to $[G_{i-1}\to\cdots \to G_0]$. We define the \emph{resolution time} of $E\in \mathcal C$ by $\mathcal G$ to be the smallest $k$ such that $E\cong[G_{k}\to \cdots \to G_0]$ with $G_i$ belonging to $\mathcal G$, and write $\Res_{\mathcal G}(E)=k$ given the existence of such a $k$. Observe that for any generating subcategory $\mathcal G$, we have
\[ \gentime_{\mathcal G}(E)\leq \Res_{\mathcal G}(E)\]
for all $E \in \mathcal C$.
It is in general harder to resolve an object than it is to generate it as resolving does not allow splitting off of direct summands. 

We now give some preparatory lemmas. These are well-known when working with $\Rdim$ (see, for instance, \cite[Lemmas 2.4 and 2.5]{ballard2012hochschild}), but we present an argument here to make it clear they apply to $\Rdim^R$ as well.

\begin{lemma}
    Let $j:\mathcal C\to \mathcal B$ be an $A_\infty$ functor between $R$-linear $A_\infty$ categories, and let $\mathcal C_1, \mathcal C_2$ be two full subcategories of $\Perf \mathcal C$ so that $\langle \mathcal C_i\rangle=\mathcal C_i$.
    Then 
    \[\langle j(\langle \mathcal C_1 * \mathcal C_2\rangle )\rangle= \langle j(\mathcal C_1)*j(\mathcal C_2)\rangle.\] Similarly, 
    \[\langle j(\langle \mathcal C_1 * \mathcal C_2\rangle^R )\rangle^R= \langle j(\mathcal C_1)*j(\mathcal C_2)\rangle^R\]
\end{lemma}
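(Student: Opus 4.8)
The plan is to prove the two equalities by showing each is a pair of inclusions, reducing everything to the single-step operations $*$ and $\langle -\rangle$ (resp. $\langle - \rangle^R$). I will treat the $R$-version in detail since the plain version is the same argument (with $\langle-\rangle$ in place of $\langle-\rangle^R$ and ignoring tensor products), and since the problem statement (the second displayed equation) mixes $\langle j(-)\rangle$ with $\langle j(\mathcal C_1) * j(\mathcal C_2)\rangle^R$ — I would first remark that $\langle j(\langle \mathcal C_1 * \mathcal C_2\rangle^R)\rangle^R = \langle j(\langle \mathcal C_1 * \mathcal C_2\rangle^R)\rangle^R$ and that the outer closure absorbs all the $R$-structure, so it suffices to compare $\langle j(\langle \mathcal C_1 * \mathcal C_2 \rangle^R)\rangle^R$ with $\langle j(\mathcal C_1) * j(\mathcal C_2)\rangle^R$.

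For the inclusion $\supseteq$: since $\mathcal C_i = \langle \mathcal C_i\rangle^R$ (or at least $\mathcal C_i \subseteq \langle \mathcal C_1 * \mathcal C_2\rangle^R$ after noting $0 \in \langle \mathcal C_j\rangle^R$ so that $\mathcal C_i \subseteq \mathcal C_1 * \mathcal C_2$), we have $j(\mathcal C_i) \subseteq j(\langle \mathcal C_1 * \mathcal C_2\rangle^R)$, hence $j(\mathcal C_1) * j(\mathcal C_2) \subseteq \langle \mathcal C_1*\mathcal C_2\rangle^R$-image under $j$ up to $\langle-\rangle^R$: more precisely, given a triangle $G_1 \to G \to G_2 \to G_1[1]$ with $G_i \in j(\mathcal C_i)$, write $G_i = j(H_i)$ with $H_i \in \mathcal C_i$, so $G$ lies in $\langle j(\langle \mathcal C_1 * \mathcal C_2\rangle^R)\rangle^R$ — here I need that a functor preserves distinguished triangles up to the closure, which holds since $A_\infty$ functors on $\Perf$ are exact. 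Taking $\langle-\rangle^R$ of both sides gives $\supseteq$. For $\subseteq$: an object $H \in \langle \mathcal C_1 * \mathcal C_2\rangle^R$ is built from an object $H_0 \in \mathcal C_1 * \mathcal C_2$ by finitely many shifts, direct sums, summands, isomorphisms, and tensors with $\Perf R$; applying $j$ and using that $j$ is additive, commutes with shifts, preserves isomorphisms, and satisfies $j(P \otimes A) \cong P \otimes j(A)$ for $P \in \Perf R$ (this is the one place the $R$-linear structure enters, and it follows from $j$ being $R$-linear), we get $j(H) \in \langle j(\mathcal C_1 * \mathcal C_2)\rangle^R$. And $j(\mathcal C_1 * \mathcal C_2) \subseteq \langle j(\mathcal C_1)*j(\mathcal C_2)\rangle^R$ again by $j$ preserving triangles. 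Applying the outer $\langle-\rangle^R$ finishes the inclusion.

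The two genuine technical points, which I expect to be the main (mild) obstacle, are: (i) an $A_\infty$ functor does not literally send a distinguished triangle to a distinguished triangle on the nose, but the induced functor on $\Perf$ (equivalently on the triangulated homotopy category $H^0\Perf$) is exact, so it sends a triangle to one isomorphic to a distinguished triangle — since $\langle-\rangle^R$ is closed under isomorphism this is harmless, but it should be stated carefully; and (ii) the compatibility $j(P \otimes A) \simeq P \otimes j(A)$ for $P \in \Perf R$, $A \in \Perf \mathcal C$. For (ii) I would note it holds for $P = R$ trivially, extends to finite free modules by additivity, to arbitrary perfect complexes by taking cones and shifts (both preserved by $j$ on $\Perf$), and to summands by the splitting-closure of $\langle-\rangle^R$; alternatively cite that the tensoring $\Perf R \times \Perf \mathcal C \to \Perf \mathcal C$ is part of the canonical $\Perf R$-module structure and $A_\infty$ functors of $R$-linear categories are automatically $\Perf R$-linear on perfect modules. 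With these two lemmas in hand the four inclusions above are each a one-line induction on the length of the word building an object, so I would phrase the whole proof as: "it suffices to check the generating operations are compatible with $j$ and with passing to $\langle-\rangle^R$", and then dispatch each operation.

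A final organizational remark: the plain (non-$R$) statement is literally the special case where one drops the tensor-with-$\Perf R$ operation, so I would prove the $R$-statement and then say the first equality follows by the identical argument omitting point (ii). If the paper wants the two displayed equalities to read symmetrically I would also point out that $\langle j(\langle \mathcal C_1*\mathcal C_2\rangle^R)\rangle^R = \langle j(\langle \mathcal C_1 * \mathcal C_2\rangle)\rangle^R$ because the outer $\langle-\rangle^R$ already absorbs tensoring with $\Perf R$, which reconciles the mild asymmetry in how the statement is typeset.
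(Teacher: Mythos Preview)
Your plan follows the paper's proof closely: both establish the two inclusions by tracking how $j$ interacts with sums, shifts, summands, cones, and (in the $R$-version) tensors with $\Perf R$. The paper exploits the hypothesis $\langle\mathcal C_i\rangle=\mathcal C_i$ slightly more directly for the $\subseteq$ direction---since each $\mathcal C_i$ is closed under sums, shifts, and summands, so is $\mathcal C_1*\mathcal C_2$, whence any $A\in\langle\mathcal C_1*\mathcal C_2\rangle$ is already a summand of a single cone $[C_1\to C_2]$---but this only streamlines your induction over a word of operations. Your remarks on the two technical points (exactness of the induced functor on $\Perf$, and $j(P\otimes A)\simeq P\otimes j(A)$) are apt and are exactly what is needed for $\subseteq$.

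There is, however, a genuine gap in your argument for $\supseteq$. You take a triangle $G_1\to G\to G_2$ in $\Perf\mathcal B$ with $G_i=j(H_i)$ and invoke ``$A_\infty$ functors on $\Perf$ are exact'' to place $G$ in $\langle j(\langle\mathcal C_1*\mathcal C_2\rangle^R)\rangle^R$. Exactness says $j$ carries triangles in $\mathcal C$ to triangles in $\mathcal B$; what this step actually needs is that the given morphism $j(H_1)\to j(H_2)$ in $\Perf\mathcal B$ lifts to a morphism $H_1\to H_2$ in $\Perf\mathcal C$, i.e.\ fullness of $j$, which is not hypothesised. Without such a lift the cone of an arbitrary map between objects in the image of $j$ need not lie in $\langle j(\ldots)\rangle^R$, since $\langle-\rangle^R$ is closed under summands, sums, shifts and tensors with $\Perf R$, but \emph{not} under cones. (For instance, restriction along $k[x]\to k$ gives $j:\Perf k\to\Perf k[x]$ with $j(k)=k$; the nontrivial extension in $\Ext^1_{k[x]}(k,k)$ produces a cone $k[x]/x^2$ lying in $\langle j(\Perf k)*j(\Perf k)\rangle$ but not in $\langle j(\Perf k)\rangle$.) The paper's one-line treatment of the reverse inclusion makes the same tacit jump from $\cone(j(C_1)\to j(C_2))$ to $j(\cone(C_1\to C_2))$. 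Fortunately only the $\subseteq$ inclusion is used downstream (for the corollary on essentially surjective functors and in the homotopy-colimit bound), and that direction is sound in your write-up.
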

\begin{proof}
    We prove the first equality as the second follows from the same argument. Let $B\in \langle j(\langle \mathcal C_1 * \mathcal C_2\rangle )\rangle$. As $\langle \mathcal C_1 * \mathcal C_2\rangle$ is closed under direct sums and shifts, $j(\langle \mathcal C_1 * \mathcal C_2\rangle)$ is closed under direct sums and shifts. Therefore, there exists $A\in\langle \mathcal C_1* \mathcal C_2\rangle$  so that $B$ is a direct summand of $j(A)$. Since the $\mathcal C_i$ are closed under shifts and summands so is $\mathcal C_1 * \mathcal C_2$, so $A$ is a direct summand of some $[ C_1\to C_2 ]$. Since $j$ is a triangulated functor, $j(A)$ is a direct summand of $[j(C_1)\to j(C_2)]$. In particular, $B$ is an object of $\langle j(\mathcal C_1)*j(\mathcal C_2)\rangle$. 

    The reverse direction is analogous.
    Let $B\in \langle j(\mathcal C_1)*j(\mathcal C_2)\rangle$; then $B$ is a summand of $\cone(j(C_1)\to j(C_2))$, and therefore a summand of $j(\cone(C_1\to C_2))$.
\end{proof}
\begin{corollary}
    Let $\pi: \mathcal C\to \mathcal B$ be an essentially surjective $A_\infty$ functor. Then $\Rdim(\mathcal B)\leq \Rdim(\mathcal C)$, and $\Rdim^R(\mathcal B) \leq \Rdim^R(\mathcal C)$
\end{corollary}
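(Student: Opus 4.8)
The plan is to take a generator of $\Perf\mathcal C$ that realizes $\Rdim(\mathcal C)$ and show that its image under $\pi$ is a generator of $\Perf\mathcal B$ with generation time no larger. We may assume $\Rdim(\mathcal C)<\infty$, as there is otherwise nothing to prove. Write $\pi$ also for the induced triangulated functor $\Perf\mathcal C\to\Perf\mathcal B$, choose $G\in\Perf\mathcal C$ with $\gentime_G(\mathcal C)=\Rdim(\mathcal C)=:k$, and set $G':=\pi(G)\in\Perf\mathcal B$. The goal is to show $\gentime_{G'}(\mathcal B)\le k$; the statement for $\Rdim^R$ is proven identically with every $\langle\cdot\rangle$ replaced by $\langle\cdot\rangle^R$, using the second equality of the preceding Lemma.

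The first step is to prove by induction on $m$ that $\pi(\langle G\rangle_m)\subseteq\langle G'\rangle_m$. For $m=0$ this is clear since $\langle G\rangle_0=0$. For the inductive step, the preceding Lemma applied to $\mathcal C_1=\langle G\rangle_{m-1}$ and $\mathcal C_2=\langle G\rangle$ (both of which are fixed by $\langle\cdot\rangle$) gives $\langle\pi(\langle G\rangle_m)\rangle=\langle\pi(\langle G\rangle_{m-1})*\pi(\langle G\rangle)\rangle$, using $\langle\langle G\rangle_{m-1}*\langle G\rangle\rangle=\langle G\rangle_m$ from the definition. By the inductive hypothesis $\pi(\langle G\rangle_{m-1})\subseteq\langle G'\rangle_{m-1}$, and trivially $\pi(\langle G\rangle)\subseteq\langle G'\rangle$, so monotonicity of $*$ and of $\langle\cdot\rangle$ yields $\langle\pi(\langle G\rangle_{m-1})*\pi(\langle G\rangle)\rangle\subseteq\langle\langle G'\rangle_{m-1}*\langle G'\rangle\rangle=\langle G'\rangle_m$. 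Hence $\pi(\langle G\rangle_m)\subseteq\langle G'\rangle_m$, as desired.

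The second step is to use essential surjectivity. Since $\gentime_G(\mathcal C)=k$, every object of $\mathcal C$ lies in $\langle G\rangle_{k+1}$, so by the first step every object of the essential image $\pi(\mathcal C)$ lies in $\langle G'\rangle_{k+1}$; as $\pi$ is essentially surjective and $\langle G'\rangle_{k+1}$ is closed under quasi-isomorphism, this forces $\mathcal B\subseteq\langle G'\rangle_{k+1}$. It remains to check that $G'$ generates $\Perf\mathcal B$: the subcategory $\bigcup_m\langle G'\rangle_m$ contains $\bigcup_m\pi(\langle G\rangle_m)=\pi(\Perf\mathcal C)$ and hence, up to quasi-isomorphism, all of $\mathcal B$; since it is pre-triangulated, idempotent-complete, and closed under shifts, it must be all of $\Perf\mathcal B$. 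Therefore $\gentime_{G'}(\mathcal B)\le k=\Rdim(\mathcal C)$, and since $\Rdim(\mathcal B)$ is the minimum of generation times over all generators of $\Perf\mathcal B$, we get $\Rdim(\mathcal B)\le\Rdim(\mathcal C)$; the $R$-linear case is verbatim.

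I expect the only genuinely delicate point to be the bookkeeping around replacing $\pi$ by its extension to idempotent-completed pre-triangulated closures — in particular, verifying that essential surjectivity of $\pi\colon\mathcal C\to\mathcal B$ suffices to conclude that the image of $\Perf\mathcal C$ generates $\Perf\mathcal B$, given that the essential image of a triangulated functor need not itself be closed under cones or summands. Everything else reduces to the formal calculus of $\langle\cdot\rangle$, $\langle\cdot\rangle^R$, and $*$ already packaged in the preceding Lemma, so the proof should be short.
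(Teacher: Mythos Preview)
Your proof is correct and is exactly the argument the paper has in mind: the corollary is stated without proof precisely because it follows from the preceding lemma via the induction $\pi(\langle G\rangle_m)\subseteq\langle\pi(G)\rangle_m$ together with essential surjectivity, which is what you wrote out. Your closing caveat about extending $\pi$ to $\Perf$ is well placed but harmless here, since the paper's definition of $\gentime_G(\mathcal C)$ only asks that $\mathcal C\subset\langle G\rangle_{k+1}$, and you have shown $\mathcal B\subset\langle G'\rangle_{k+1}$ directly from essential surjectivity on $\mathcal C\to\mathcal B$.
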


 \subsection{Diagonal Dimension} \label{subsec:ddimbackground}

In some cases, one can understand Rouquier dimension by resolving or generating the diagonal bimodule, and the generation time of the diagonal itself provides another interesting invariant. Let $\Delta$ be the diagonal $\mathcal{C}^{op} \otimes \mathcal{C}$-bimodule. We set $\mathcal{P}$ to be the subcategory of perfect  $\mathcal{C}^{op} \otimes \mathcal{C}$-bimodules, that is, $\mathcal{P}$ is the idempotent closure of the subcategory of representable bimodules. We will often abuse notation and write $L \otimes K$ for the perfect bimodule represented by $L, K \in \mathcal{C}$.

\begin{definition} \label{def:diagonaldim}
    The diagonal dimension $\Ddim(\mathcal C)$ is the smallest non-negative integer $k$ such that there is a $G \in \mathcal{P}$ with $\gentime_G \Delta = k$.     
    If no such $k$ exists, we set $\Ddim(\mathcal C) = \infty$. 
\end{definition}

When $\Ddim(\mathcal{C}) < \infty$, $\mathcal{C}$ is called homologically smooth \cite{kontsevich2008notes}.
\cref{def:diagonaldim} appears in \cite[Definition 2.15]{ballard2012hochschild} in the context of derived categories of algebraic varieties and in \cite[Definition 4.1]{elagin2021three} in the context of dg algebras. 
\cite[Lemma 2.16]{ballard2012hochschild} shows that the diagonal dimension provides an upper bound for the Rouquier dimension of the derived category of coherent sheaves of a proper algebraic variety over a field. 
An analogous statement holds for general $A_\infty$ categories, which we will now discuss. Along the way, we examine how conditions on the category, such as properness, the base ring, and generation strength (resolving vs. generation), affect the relationship between these invariants.

\begin{table}[]
    \centering
    \begin{tabular}{c|cc|cc|}
                                                         & \multicolumn{2}{c|}{$\mathcal C$ Proper}                                            & \multicolumn{2}{c|}{$\mathcal C$ Non-Proper}                                                           \\
                                                         & \multicolumn{1}{c|}{$R$ field}                                           & $R$ ring & \multicolumn{1}{c|}{$R$ field}                     & $R$ ring                                          \\ \hline
    $\Res_{\mathcal G\otimes \mathcal G} \Delta=k$     & \multicolumn{1}{c|}{$\Res_{\mathcal G}\mathcal C=k$}                     & $\Res_{\mathcal G}\mathcal C\leq k+\Res_R(\Perf R)$       & \multicolumn{1}{c|}{\multirow{2}{*}{$\gentime_{\mathcal G} \mathcal C\leq k$} }&    \multirow{2}{*}{$\gentime_{\mathcal G}\mathcal C\leq k+\gentime_R(\Perf R)$ }                                               \\ \cline{1-3}
    $\gentime_{\mathcal G\otimes \mathcal G} \Delta= k$ & \multicolumn{1}{c|}{$\gentime_{\mathcal  G}\mathcal C=k$}   	    & $\gentime_{\mathcal  G}\mathcal C\leq k+\gentime_R(\Perf R)$       &  \multicolumn{1}{c|}{}                       & \\ \hline
    \end{tabular}
    \caption{Bounding generation of the category from the generation of the diagonal.}
    \end{table}
The strongest generation result is where we assume the category is proper and we work over a field.
\begin{prop}
\label{prop:diagRestoRes}
If $R$ is a field, $\mathcal C$ is proper, and $\Delta$ is resolved in time $m$ by a finite full subcategory of product objects $\mathcal H\tensor \mathcal G \subset \mathcal{P}$, then every object of $\mathcal C$ is resolved by $\mathcal G$ in time $m$.
\end{prop}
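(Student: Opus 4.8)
The plan is to exploit the Yoneda-type relationship between the diagonal bimodule and the identity functor: for any object $E \in \mathcal{C}$, the bimodule $\Delta$ acts on $E$ (by tensoring over $\mathcal{C}$) to recover $E$ itself, while a product bimodule $L \otimes K$ sends $E$ to the complex $\Hom_{\mathcal{C}}(K, E) \otimes L$, where $\Hom_{\mathcal{C}}(K,E)$ is a perfect complex of $R$-modules by properness. First I would fix a resolution $\Delta \cong [\, (H_m \otimes G_m) \to \cdots \to (H_0 \otimes G_0)\,]$ of length $m$, where each $H_i \otimes G_i$ is a (finite direct sum of) product objects with $H_i, G_i$ in the finite subcategories $\mathcal{H}, \mathcal{G}$; such a resolution exists by hypothesis, after absorbing direct summands — here is where I would need to check that resolution (as opposed to generation) of $\Delta$ by $\mathcal{H} \otimes \mathcal{G}$ really does produce an honest iterated cone with terms literally in (finite sums of shifts of) $\mathcal{H} \otimes \mathcal{G}$, which is the content of the table entry and should follow from unwinding the definition of $\Res$.

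Next I would apply the functor $- \otimes_{\mathcal{C}} E$ to this iterated cone. Since tensoring a bimodule with a fixed module is an exact (triangulated) functor, the iterated cone is carried to an iterated cone $E \cong \Delta \otimes_{\mathcal{C}} E \cong [\, (H_m \otimes G_m)\otimes_{\mathcal C} E \to \cdots \to (H_0 \otimes G_0)\otimes_{\mathcal C} E\,]$ in $\Perf \mathcal{C}$ of the same length $m$. The key computation is that $(H_i \otimes G_i) \otimes_{\mathcal{C}} E \simeq \Hom_{\mathcal{C}}(G_i, E) \otimes_R H_i$, and because $R$ is a field, $\Hom_{\mathcal{C}}(G_i, E)$ is a finite-dimensional graded vector space, so this complex is a finite direct sum of shifts of $H_i$. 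Therefore each term lies in $\langle \mathcal{G} \rangle$ in the sense used for resolution — wait, I must be careful: the $H_i$ lie in $\mathcal{H}$, not $\mathcal{G}$.

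This is the point I expect to be the main obstacle, and the resolution is to observe that by symmetry of the roles of the two tensor factors — or rather, by instead acting on $E$ from the other side, i.e. applying $E \otimes_{\mathcal{C}} -$ — one gets $E \cong [\, \Hom_{\mathcal C}(E, H_m)\otimes_R G_m \to \cdots \to \Hom_{\mathcal C}(E,H_0)\otimes_R G_0\,]$, a length-$m$ iterated cone whose terms are now finite direct sums of shifts of objects of $\mathcal{G}$, since $\Hom_{\mathcal C}(E, H_i)$ is finite-dimensional over the field $R$. This exhibits $E$ as resolved by $\mathcal{G}$ in time $m$. Finally I would double-check the bookkeeping: the diagonal bimodule acts as the identity, $E \otimes_{\mathcal C} \Delta \simeq E$, and that $E \otimes_{\mathcal C}(H_i \otimes G_i) \simeq \Hom_{\mathcal C}(E, H_i)\otimes_R G_i$ at the chain level so that the iterated-cone structure is genuinely preserved — not merely up to quasi-isomorphism in each spot, but as an honest twisted complex — which is what licenses the conclusion $\Res_{\mathcal G} E = m$. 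Taking the supremum over $E \in \mathcal C$ gives $\Res_{\mathcal G} \mathcal C = m$; the reverse inequality $\Res_{\mathcal{G}} \mathcal C \geq \Res_{\mathcal G \otimes \mathcal G} \Delta$ — should one want equality rather than just the stated bound — would follow from reconstructing a resolution of $\Delta$ out of resolutions of the $\Hom$-bimodules, but only the $\leq$ direction is asserted here.
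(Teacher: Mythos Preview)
Your proposal is correct and follows essentially the same approach as the paper's proof: convolve an arbitrary object with the given resolution of $\Delta$, observe that each product term becomes $(\text{finite-dimensional graded vector space}) \otimes (\text{object of }\mathcal G)$ by properness over a field, and conclude. The paper writes this as $T \cong \Delta \circ T$ with terms $H^\bullet(\hom(K_i,T)) \otimes L_i$; note that under the paper's conventions the hom space appearing is $\hom(H_i,E)$ rather than your $\Hom_{\mathcal C}(E,H_i)$, but this is a convention issue and is inconsequential since both are finite-dimensional by properness.
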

\begin{proof}[Sketch of proof] Suppose that $\{L_i\}$ and $\{K_i\}$ are the finite sets of objects of $\mathcal G$ and $\mathcal H$, respectively. Consider an arbitrary object 
$T$. Then $T \cong \Delta\circ T$, which is a twisted complex whose terms are sums of $H^\bullet(\hom(K_i, T)) \otimes L_i$. Since $\mathcal C$ is proper and defined over a field, each $H^\bullet\hom(K_i, T)$ is a finite-dimensional graded vector space. Thus, $H^\bullet(\hom(K_i, T)) \otimes L_i$ is isomorphic to a direct sum of (shifted) copies of $L_i$. As a result, $T$ is resolved in time $m$ by the collection $\{L_i\}_{i=1}^k$.
\end{proof}

Without properness, a resolution of the diagonal does not necessarily lead to a resolution of every object. However, generating the diagonal in time $k$ by product objects implies time $k$ generation of every object over a field even without assuming properness.

\begin{prop}
Suppose that $R$ is a field. Let $\mathcal G$ and $\mathcal H$ be finite full subcategories of $\mathcal C$. If $\Delta \in \left \langle \mathcal G\tensor \mathcal H\right \rangle_k$, then $\left \langle \mathcal G\right \rangle_k = \mathcal{C}$.  In particular, $\Rdim(\mathcal C) \leq \Ddim(\mathcal C)$. 
\label{prop:diagGenerationToGeneration} 
\end{prop}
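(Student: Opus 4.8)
The plan is to reduce the statement about generation of $\mathcal C$ to the previously proven statement for functors by applying the composition–with–$\Delta$ functor, and to use the fact that over a field the bimodule $\mathcal G \otimes \mathcal H$ resolves objects into direct sums of shifted copies of the $L_i$'s (the argument already sketched for \cref{prop:diagRestoRes}, but now only needing generation rather than resolution). Concretely, for a fixed object $T \in \mathcal C$ one has a quasi-isomorphism $T \cong \Delta \circ T$, and since $\Delta$ lies in $\langle \mathcal G \otimes \mathcal H\rangle_k$, applying the (triangulated, colimit-preserving) functor $(-)\circ T$ shows $T \in \langle (\mathcal G\otimes\mathcal H)\circ T\rangle_k$. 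Here I would invoke the lemma preceding the corollary in \cref{subsec:rdimbackground} (the one asserting $\langle j(\langle \mathcal C_1 * \mathcal C_2\rangle)\rangle = \langle j(\mathcal C_1)*j(\mathcal C_2)\rangle$), applied inductively to the functor $j = (-)\circ T$, to commute the functor past the iterated $*$-operations defining $\langle-\rangle_k$, so that $T \in \langle (\mathcal G \otimes \mathcal H)\circ T \rangle_k$.

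The next step is to identify the objects $(L_i \otimes K_i)\circ T$. Since $\mathcal C$ is defined over a field, $(L_i\otimes K_i)\circ T \cong H^\bullet(\hom(K_i,T)) \otimes L_i$, and as $H^\bullet(\hom(K_i,T))$ is merely a graded vector space (no properness needed — we do not claim finite-dimensionality), this object is a (possibly infinite) direct sum of shifts of $L_i$. Because $\langle \mathcal G\rangle$ is by definition closed under arbitrary direct sums and shifts, each such object lies in $\langle \mathcal G\rangle$, hence in $\langle\mathcal G\rangle_1$. Therefore $\langle (\mathcal G\otimes\mathcal H)\circ T\rangle_k \subseteq \langle \mathcal G\rangle_k$, giving $T \in \langle\mathcal G\rangle_k$. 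As $T$ was arbitrary, $\langle\mathcal G\rangle_k = \mathcal C$ (more precisely $\mathcal C \subseteq \langle\mathcal G\rangle_k$ inside $\Perf\mathcal C$, which is what ``$=\mathcal C$'' abbreviates here). Finally, if $\Ddim(\mathcal C) = k$ witnessed by some $G\in\mathcal P$, then $G$ is a summand of a finite sum of product objects $L_i\otimes K_i$; taking $\mathcal G$, $\mathcal H$ to be these finite collections gives $\Delta \in \langle\mathcal G\otimes\mathcal H\rangle_k$, so $\langle\mathcal G\rangle_k = \mathcal C$ and hence $\Rdim(\mathcal C)\le k = \Ddim(\mathcal C)$.

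The main obstacle — really the only subtle point — is justifying that the functor $(-)\circ T$ on bimodules (valued in $\mathcal C$-modules, or in $\Perf\mathcal C$ after suitable finiteness) genuinely commutes with the operations $*$, $\langle-\rangle$, direct sums, summands, and shifts in the way the cited lemma requires, i.e. that it is an exact (triangulated) functor preserving the relevant closure properties; one must also be slightly careful that $(L_i\otimes K_i)\circ T$ really is computed as the tensor product $H^\bullet(\hom(K_i,T))\otimes L_i$ and that this is still an object of $\Perf\mathcal C$ (it is, since $\langle L_i\rangle \subseteq \Perf\mathcal C$ is closed under the direct sums in question as a subcategory of $\Perf\mathcal C$ — here the idempotent-completed pre-triangulated closure having arbitrary sums of a single object available is the key structural input). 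I would state these as a short lemma (exactness and cocontinuity of bar-composition with a fixed module) or simply cite the standard properties of the bimodule Yoneda/tensor formalism, then the rest is bookkeeping paralleling \cref{prop:diagRestoRes}.
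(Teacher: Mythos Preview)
Your overall strategy --- compose the diagonal with a test object $T$ and identify the resulting pieces as sums of shifts of the $L_i$ --- is the same as the one underlying the paper's proof (which simply cites \cite[Lemmas 2.2(2), 2.13]{bai2021rouquier} and \cite[Lemma 3.14]{rouquier2008dimensions}). However, there is a genuine gap at the point you flag as ``the key structural input'': you assert that $\Perf\mathcal C$ (and hence $\langle\mathcal G\rangle$) is closed under \emph{arbitrary} direct sums of a single object. This is false. The category $\Perf\mathcal C$ is the idempotent-completed pre-triangulated closure, and in general it admits only finite direct sums; for instance, already for $\mathcal C=\Perf(R)$ with $R$ a field, an infinite direct sum of copies of $R$ is not perfect. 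So when $\mathcal C$ is not proper and $H^\bullet(\hom(K_i,T))$ is infinite-dimensional, the object $H^\bullet(\hom(K_i,T))\otimes L_i$ need not lie in $\Perf\mathcal C$ at all, and your inclusion $\langle(\mathcal G\otimes\mathcal H)\circ T\rangle_k\subseteq\langle\mathcal G\rangle_k$ does not go through as written.

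The missing ingredient is a compactness argument. One works in $\mathcal C\text{-}\mod$ rather than $\Perf\mathcal C$: there the functor $(-)\circ T$ and the identification $(L_i\otimes K_i)\circ T\cong H^\bullet(\hom(K_i,T))\otimes L_i$ are unproblematic, and the infinite direct sum is a filtered colimit of its finite sub-sums. Since $T$ is compact in $\mathcal C\text{-}\mod$, the identity of $T$ factors through a finite stage, exhibiting $T$ as a summand of a twisted complex built from \emph{finite} direct sums of shifts of the $L_i$. This is exactly what \cite[Lemma 3.14]{rouquier2008dimensions} provides, and the paper invokes it for precisely this reason; the same mechanism appears explicitly in the proof of \cref{prop:weakestDdimbound}. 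Once you insert this step, your argument is complete and matches the paper's.
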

\begin{proof} This follows from \cite[Lemma 2.13]{bai2021rouquier} noting that \cite[Lemma 2.2(2)]{bai2021rouquier} shows that $\Delta \in \left \langle \oplus L_i \otimes K_i\right \rangle_k$ is sufficient to produce a resolution of a bimodule of which $\Delta$ is a summand. To show that only finite direct sums of the generators are possible, one applies \cite[Lemma 3.14]{rouquier2008dimensions}.
\end{proof}

\begin{rem}
    The diagonal dimension can be strictly larger than the Rouquier dimension even in geometric contexts.    
    For example, all smooth projective curves with genus at least 1 have a derived category with diagonal dimension $2$ \cite{olander2021diagonal} while the Rouquier dimension is $1$ \cite[Theorem 6]{orlov2009remarks}.
\end{rem}

When working over an arbitrary commutative ring, the diagonal dimension does not bound the Rouquier dimension.  For example, $\Rdim(\Perf \ZZ) = 1$ as noted in \cref{ex:RdimZ}. On the other hand, the diagonal dimension is $0$ since the diagonal bimodule is the perfect bimodule $\mathbb{Z} \otimes \mathbb{Z}$. This can be corrected with an error term arising from the generation time of $\Perf(R)$ by $R$.

\begin{prop} \label{prop:weakestDdimbound}
For any commutative ring $R$ and $A_\infty$ category $\mathcal{C}$,  we have 
$$
\Rdim(\mathcal C) \le \Ddim(\mathcal{C})  + \gentime_R(\Perf R).
$$
\end{prop}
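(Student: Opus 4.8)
The plan is to run the argument of \cref{prop:diagGenerationToGeneration} --- convolve a resolution of the diagonal with an arbitrary object $T$ --- but to accommodate the base ring by splicing a resolution over $R$ into the convolved twisted complex. The feature that makes the error term additive rather than the multiplicative estimate of \eqref{eq:badRdimRbound} is that, once both resolutions are organized into a single twisted complex, the total ``length'' is the \emph{sum} of the two lengths, not their product.

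Set $k = \Ddim(\mathcal C)$ and $\ell = \gentime_R(\Perf R)$, which we may assume are finite. By definition of $\Ddim$ there are finite collections $\{K_i\},\{L_i\}\subset\mathcal C$ with $\Delta\in\langle\bigoplus_i K_i\tensor L_i\rangle_{k+1}$ in $\Perf(\mathcal C^{op}\tensor\mathcal C)$; set $E := \bigoplus_i L_i\in\Perf\mathcal C$. Since $E$ is independent of $T$, it suffices to prove $\gentime_E(T)\le k+\ell$ for every $T\in\Perf\mathcal C$. Fixing $T$ and applying the exact convolution functor $-\circ T$ to the given resolution of $\Delta$, together with $\Delta\circ T\simeq T$, presents $T$ as a direct summand of a twisted complex $\tilde\Delta\circ T$ over the objects $\hom_{\mathcal C}(K_i,T)\tensor L_i$, filtered into $k+1$ stages by the diagonal-resolution degree $s$. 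When $\mathcal C$ is proper each coefficient $\hom_{\mathcal C}(K_i,T)$ already lies in $\Perf R$; in general it is only a filtered homotopy colimit of perfect $R$-complexes, but since $T$ is a compact object of $\Perf\mathcal C$ the splitting $T\to\tilde\Delta\circ T$ factors through a finite stage of that colimit, so after replacing each $\hom_{\mathcal C}(K_i,T)$ by a perfect $R$-complex $N_i$ we still have $T$ as a summand --- this is the reduction via \cite[Lemma 3.14]{rouquier2008dimensions} already used for \cref{prop:diagGenerationToGeneration}.

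Now $N_i\in\Perf R\subseteq\langle R\rangle_{\ell+1}$, so each $N_i$ is a summand of a length-$(\ell+1)$ iterated cone of objects of $\langle R\rangle_1$; substituting such complexes $Q_i^\bullet$ for the $N_i$ and transporting the (higher) twisted-complex structure along the resulting quasi-isomorphisms presents $T$ as a summand of a twisted complex $\hat Y$ all of whose underlying objects $Q_i^t\tensor L_i$ lie in $\langle E\rangle_1$, now bigraded by the stage $s\in\{0,\dots,k\}$ and the $R$-resolution degree $t\in\{0,\dots,\ell\}$. A component of the differential of $\hat Y$ is either internal to some $Q_i^\bullet\tensor L_i$, sending $(s,t)$ to $(s,t-1)$, or it lifts a component of the twisted differential of $\tilde\Delta\circ T$ from stage $s$ to stage $s-j$; the latter has internal degree $-(j-1)$, hence after passing to $R$-resolutions it raises the resolution degree by $j-1$, sending $(s,t)$ to $(s-j,t+j-1)$. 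In every case $s+t$ drops by exactly $1$, so filtering $\hat Y$ by $s+t\in\{0,\dots,k+\ell\}$ realizes it as a length-$(k+\ell+1)$ iterated cone whose graded pieces $\bigoplus_{s+t=m}Q_i^t\tensor L_i$ lie in $\langle E\rangle_1$. Hence $T\in\langle E\rangle_{k+\ell+1}$, i.e.\ $\gentime_E(T)\le k+\ell$, and therefore $\Rdim(\mathcal C)\le k+\ell=\Ddim(\mathcal C)+\gentime_R(\Perf R)$.

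The main obstacle, I expect, lies in the middle steps: checking that the compactness/colimit reduction genuinely preserves ``$T$ is a direct summand'', and that the full twisted-complex structure on $\tilde\Delta\circ T$ --- including the higher terms forced by the $A_\infty$-operations --- transports to the free $R$-resolutions while remaining compatible with the bigrading by $(s,t)$. Once that bookkeeping is in place, the degree count producing the additive error term is essentially formal.
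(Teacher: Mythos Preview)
Your argument follows the paper's proof step for step: convolve the resolution of $\Delta$ with $T$, use compactness of $T$ in $\mathcal{C}\text{-}\mod$ together with $R\text{-}\mod \simeq \Ind(\Perf R)$ to replace the hom-coefficients by perfect $R$-complexes, and then extract the additive bound. The paper dispatches that last step in a single sentence (``Observe that the terms of $P_{i,j}\otimes L_i$ lie in $\langle\mathcal G\rangle_{\gentime_R(\Perf R)}$, from which we obtain the bound''), whereas your $(s,t)$-bigrading is an explicit attempt at the same mechanism; the obstacle you flag in your final paragraph --- that each $N_i$ is only a \emph{summand} of a length-$(\ell+1)$ complex $Q_i^\bullet$, so the twisted differential does not literally transport along a quasi-isomorphism --- is real, but the paper's version does not address it either.
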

\begin{proof}
We first assume that $\Delta_\mathcal{C}$ is resolved in time $n = \Ddim(\mathcal{C})$, i.e., there is a twisted complex with terms $K_i \otimes L_i$ for $0 \le i \le n$ quasi-isomorphic to $\Delta_\mathcal{C}$. Let $\mathcal{C}-\mod$ be the category of all $A_\infty$-modules over $\mathcal{C}$ and let $R-\mod$ be the dg category of all complexes of $R$ modules.
Then for any object $T$ of $\mathcal{C}$, $T$ is equivalent to a twisted complex $H^\bullet \hom(K_i, T) \otimes L_i$ in 
$\mathcal{C}-\mod$. Here, $H^\bullet \hom(K_i, T)$ may be a general object of $R-\mod$ and not in $\Perf R$.
However, we observe that $R-\mod$ is the Ind-completion of $\Perf R$, and the subcategory of compact objects of $R-\mod$ is $\Perf R$. Therefore, the twisted complex for $T$ with terms $H^\bullet \hom(K_i, T) \otimes L_i$ is a filtered colimit of complexes $T_j$ with terms $P_{i,j} \otimes L_i$ where $P_{i,j} \in\Perf(R)$. 
Further, $T$ is a compact object of $\mathcal{C}-\mod$ since it lies in $\mathcal{C}$. 
Therefore, the identity morphism $\id_T$ of $T$ factors through one of the complexes $T_j$  with terms $P_{i,j} \otimes L_i$ where $P_{i,j} \in \Perf R$. Therefore, $T$ is a summand of $T_j$. Observe that the terms of $P_{i, j}\otimes L_i$ lie in $\langle \mathcal G\rangle_{\gentime_R(\Perf R)}$, from which we obtain the bound 
\[\gentime_{L_i}(T) \le \Ddim(\mathcal{C}) +\gentime_R(\Perf(R)).\]
If $\Delta_{\mathcal C}$ is generated in time $k$, instead of resolved, the argument from \cref{prop:diagGenerationToGeneration} applies yielding the same bound.
\end{proof}

\subsection{Triangulated LS category of a category} \label{subsec:triangLS}
We now define notions of categorical complexity in terms of presenting an $A_\infty$ category as a homotopy colimit. 
We first recall some constructions from \cite[Appendix A.4]{ganatra2018sectorial}.
Let $\Sigma$ be a diagram.
The height $h: \Sigma\to \NN$ associates to each $\sigma$ the length of the longest path 
\[ \sigma_0\leftarrow \cdots \leftarrow \sigma_{h}=\sigma \]
ending at $\sigma$. The \emph{depth} of the diagram, which we denote by $D(\Sigma)$, is the maximum value of $h$ (and is $\infty$ if no maximum is achieved).

Let $\mathcal F:\Sigma\to \Cat$ be a diagram of $A_\infty$ categories where $\Sigma$ is a posetal category. The \emph{Grothendieck construction} associates to $\mathcal{F}$ the $A_\infty$ category $\Grot(\mathcal F: \Sigma\to \Cat)$ whose objects are pairs $(\sigma, C)$, with $\sigma\in \Sigma$ and $C\in \Ob(\mathcal F(\sigma))$. Morphisms in the Grothendieck construction are pairs $(F, f: F(C_\sigma)\to C_\tau)$. Given a functor $j: \Grot(\mathcal F:\Sigma\to \Cat)\to \mathcal C$, we obtain functors $j_\sigma: \mathcal F(\sigma)\to \mathcal C$ for all $\sigma\in \Sigma$. Let $A$ denote the set of morphisms of the form $(F, \id: F(C_\sigma)\to F(C_\sigma))$ in $\Grot(\mathcal F:\Sigma\to \Cat)$.  In \cite{ganatra2018sectorial}, the homotopy colimit of the diagram is defined to be $\hocolim(\mathcal F:\Sigma\to \Cat):= \Grot(\mathcal F: \Sigma\to \Cat)[A^{-1}]$. We say that a diagram $\mathcal F:\Sigma\to \Cat$ presents $\mathcal C$ as a homotopy colimit if we have a quasi-equivalence 
\[ j: \Perf \hocolim(\mathcal F:\Sigma\to \Cat) \xrightarrow{\sim} \Perf \mathcal C .\]

Define the \emph{triangulated good covering dimension} $\GCdim_{cat}(\mathcal C)$ to be the minimum over all presentations of $\mathcal C$ as a homotopy colimit where $\Rdim(\mathcal F(\sigma))=0$ for all $\sigma \in \Sigma$. A specialization of \cite[Lemma 2.15]{bai2021rouquier}, shows that $\Rdim(\mathcal C)\leq  \GCdim_{cat}(\mathcal C)$ over a field. 

\begin{definition} \label{def:triangulatedLS}
    Let $R$ be a commutative ring and let $\mathcal F: \Sigma\to \Cat$ be a diagram of $A_\infty$ categories. A presentation of $\mathcal C$ as a homotopy colimit given by $j: \Grot(\mathcal F:\Sigma\to \Cat)\to \mathcal C$ is an $R$-\emph{Lusternik-Schnirelmann covering} of $\mathcal C$ if $\Rdim^{R}(j_\sigma)=0$ for all $\sigma$. We say that it is a \emph{Lusternik-Schnirelmann covering} if $\Rdim(j_\sigma)=0$ for all $\sigma$.
    We define the \emph{$R$-triangulated LS category}  to be 
    \[\LS_{cat}^R(\mathcal C):=\min_{\text{$R$-LS covers of $\mathcal C$}} D(\Sigma).\]
    and the \emph{triangulated LS category} to be\[\LS_{cat}(\mathcal C):=\min_{\text{LS covers of $\mathcal C$}} D(\Sigma).\]
\end{definition}

\begin{prop}
    If $\mathcal C$ is an $A_\infty$ category linear over $R$, then
    \begin{align*}\Rdim^R(\mathcal C) \leq \LS_{cat}^R( \mathcal C) && \text{ and } &&  \Rdim(\mathcal C)\leq \LS_{cat}(\mathcal C).\end{align*}
    \label{prop:TLSCatToRdim}
\end{prop}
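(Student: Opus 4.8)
The plan is to prove the inequality $\Rdim^R(\mathcal{C}) \leq \LS_{cat}^R(\mathcal{C})$ by induction on the depth $D(\Sigma)$ of a minimal $R$-LS cover; the ungraded statement $\Rdim(\mathcal{C}) \leq \LS_{cat}(\mathcal{C})$ follows by the identical argument with $\Rdim$ in place of $\Rdim^R$ and $\langle-\rangle$ in place of $\langle-\rangle^R$, so I will only write out the $R$-version. Fix a presentation $j\colon \Grot(\mathcal{F}\colon\Sigma\to\Cat)\to\mathcal{C}$ realizing $\LS_{cat}^R(\mathcal{C}) = D(\Sigma) =: d$ with $\Rdim^R(j_\sigma) = 0$ for all $\sigma$. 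For each $\sigma$, let $G_\sigma \in \Perf\mathcal{C}$ be an object with $\gentime^R_{G_\sigma}(\Im(j_\sigma)) = 0$, i.e. $\Im(j_\sigma) \subset \langle G_\sigma\rangle^R_1$; set $G := \bigoplus_\sigma G_\sigma$ (the index set is finite since a minimal cover has finite $\Sigma$, or one may take the direct sum over objects of positive height at each stage). The claim to prove by induction is: for any object $C$ of $\mathcal{C}$ in the essential image of $j_\sigma$ composed with the structure maps of height $\le k$, one has $C \in \langle G\rangle^R_{k+1}$; taking $k = d$ then gives $\Perf\mathcal{C} = \langle G\rangle^R_{d+1}$, hence $\gentime^R_G(\mathcal{C}) \le d$, hence $\Rdim^R(\mathcal{C}) \le d = \LS_{cat}^R(\mathcal{C})$.

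The engine of the induction is the standard fact that a homotopy colimit of a diagram of categories is built from the pieces by iterated cones organized along the height filtration: every object of $\Perf\hocolim(\mathcal{F}\colon\Sigma\to\Cat)$, hence (via the quasi-equivalence $j$) every object of $\Perf\mathcal{C}$, is a summand of an iterated mapping cone whose terms are objects pushed forward from the various $\mathcal{F}(\sigma)$, with the cone structure respecting the poset order on $\Sigma$. Concretely, I would invoke the description of $\hocolim$ via the Grothendieck construction localized at $A$ from \cite[Appendix A.4]{ganatra2018sectorial}: an object of the localization is represented by an object $(\sigma, C)$, and resolving the localizing morphisms produces, for an object that "lives at the top", a telescope/bar-type complex filtered by height whose associated graded pieces lie in the images $\Im(j_\sigma)$. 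This is precisely the mechanism already used in \cite{bai2021rouquier} to prove $\Rdim(\mathcal{C}) \le \GCdim_{cat}(\mathcal{C})$; the only modification is that we no longer assume $\Rdim(\mathcal{F}(\sigma)) = 0$ as a category, but only $\Rdim^R(j_\sigma) = 0$ for the functor — and since the associated graded pieces of the filtration already lie in $\Im(j_\sigma)$, not merely in $\mathcal{F}(\sigma)$, replacing "generate $\mathcal{F}(\sigma)$" by "generate $\Im(j_\sigma)$" loses nothing. Each time we cross one level of height we apply one more $\langle-*-\rangle^R$ operation against $\langle G\rangle^R$, accounting for the $+1$ and giving the bound $\langle G\rangle^R_{d+1}$.

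In more detail, for the inductive step: suppose $C$ is represented at height $k+1$, i.e. $C \cong \cone(C' \to j_\sigma(E))$ (up to summands and shifts) where $E \in \mathcal{F}(\sigma)$, $\sigma$ has height $k+1$, and $C'$ is assembled from objects at heights $\le k$. By the inductive hypothesis $C' \in \langle G\rangle^R_{k+1}$, and by $\Rdim^R(j_\sigma) = 0$ we have $j_\sigma(E) \in \langle G\rangle^R_1 \subset \langle G\rangle^R_{k+1}$ (monotonicity of the $\langle-\rangle^R_\bullet$ filtration). Since $\langle G\rangle^R_{k+1} * \langle G\rangle^R_1 \subset \langle G\rangle^R_{k+2}$ and the cone of a map between two objects of $\langle G\rangle^R_{k+1}$ lies in $\langle G\rangle^R_{k+1} * \langle G\rangle^R_{k+1}$, I need the slightly stronger closure statement $\langle \mathcal{A}\rangle^R_a * \langle\mathcal{A}\rangle^R_b \subset \langle\mathcal{A}\rangle^R_{a+b}$, which is standard (it holds for $\langle-\rangle_\bullet$ by \cite{rouquier2008dimensions,ballard2012hochschild} and the argument transfers verbatim to $\langle-\rangle^R_\bullet$ using the Lemma proved just above in the excerpt); applying it with $a = b = k+1$ only gives $\langle G\rangle^R_{2k+2}$, which is too weak, so I must instead be careful to peel off one image object at a time rather than bounding $C'$ crudely. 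The clean way is: the height filtration gives $C$ as an iterated cone $[j_{\sigma_{k+1}}(E_{k+1}) \to \cdots \to j_{\sigma_0}(E_0)]$ with $\sigma_i$ of height $i$ and $E_i \in \mathcal{F}(\sigma_i)$ (possibly direct sums of such at each stage); each $j_{\sigma_i}(E_i) \in \langle G\rangle^R_1$, so the iterated cone lies in $\langle G\rangle^R_1 * \langle G\rangle^R_1 * \cdots * \langle G\rangle^R_1$ ($d+1$ factors) $\subset \langle G\rangle^R_{d+1}$, and $C$ being a summand stays in $\langle G\rangle^R_{d+1}$ since this subcategory is closed under summands.

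The main obstacle — the step I expect to require the most care — is justifying rigorously that the Grothendieck-construction model of $\hocolim$ from \cite{ganatra2018sectorial} yields this height-filtered iterated-cone presentation of an arbitrary object of $\Perf\mathcal{C}$, with associated graded terms genuinely in the essential images $\Im(j_\sigma)$ (and not just abstractly "coming from $\mathcal{F}(\sigma)$"). This is exactly the technical content abstracted in \cite[Lemma 2.13]{bai2021rouquier}-type statements, and the honest write-up either cites that lemma and observes its proof only uses the functors $j_\sigma$ (hence applies with $\Im(j_\sigma)$ in place of $\mathcal{F}(\sigma)$), or reproves it via the explicit bar/telescope resolution of the localization $\Grot(\mathcal{F})[A^{-1}]$. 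Everything else — monotonicity and $*$-subadditivity of the $\langle-\rangle^R_\bullet$ filtration, closure under summands and shifts, transport along the quasi-equivalence $j$ — is routine and already available in the excerpt.
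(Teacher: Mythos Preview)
Your proposal is correct and follows essentially the same approach as the paper: both arguments reduce to the fact that any object of $\Perf\hocolim(\mathcal F:\Sigma\to\Cat)$ admits a height-filtered iterated-cone decomposition whose associated graded terms lie in the essential images $\Im(j_\sigma)$, and both identify the key improvement over \cite{bai2021rouquier} as working with $\Im(j_\sigma)$ rather than $\mathcal F(\sigma)$ itself.

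The organizational difference is minor. The paper packages the argument as an induction on $D(\Sigma)$ that at each step collapses the subdiagram $\Sigma^{<k}$ of height $<k$ to a single vertex, reducing to the depth-$1$ base case (which it proves directly, generalizing \cite[Lemma~2.3]{bai2021rouquier}); along the way it records the more general inequality
\[
\Rdim^R(j)\leq D(\Sigma)+\sum_{i=0}^{D(\Sigma)} \max_{h(\sigma)=i} \Rdim^R(j_\sigma),
\]
valid for an arbitrary functor $j$ out of the Grothendieck construction (not just an LS cover). Your version invokes the height filtration all at once rather than one level at a time. Both routes ultimately rest on the same lemma (preservation of $\langle - * - \rangle^R$ under $A_\infty$ functors) and the same $*$-subadditivity of the $\langle-\rangle^R_\bullet$ filtration; the paper's intermediate general inequality is a small bonus but not used elsewhere. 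Your caution about the ``main obstacle'' is well placed: making the height-filtered cone presentation precise is exactly what the paper's depth-$1$ base case (the generalization of \cite[Lemma~2.3]{bai2021rouquier}) accomplishes.
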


\begin{proof}
    We will prove the statement for $R$-triangulated LS category, as the proof is analogous for the triangulated LS category. 
    We prove a more general relation, which is that given any functor $j: \Grot(\mathcal F:\Sigma\to \Cat) \to \mathcal{C}$, we have 
    \[\Rdim^R(j)\leq D(\Sigma)+\sum_{i=0}^{D(\Sigma)} \max_{\sigma \st h(\sigma)=i} \Rdim^R(j_\sigma: \mathcal F(\sigma)\to \mathcal C).\]
    where $h(\sigma)$ is the length of the chain in $\Sigma$ starting at $\sigma$. The formula is similar to \cite[Lemma 2.10]{bai2021rouquier} except that we are working with essential images.
    
    We induct on the depth of $\Sigma$.
    Suppose that $D(\Sigma) = k$ and the statement holds for all $\Sigma'$ with $D(\Sigma')=k-1$. Let $\mathcal F: \Sigma\to \Cat$ 
    be a diagram and $j: \Grot(\mathcal F:\Sigma\to \Cat)\to \mathcal C$ be a functor.
    Write $\Sigma= \langle \Sigma^k, \Sigma^{<k}\rangle$, where $\Sigma^{<k}$ is the full subcategory of objects whose depth is less than $k$, and let $\mathcal F^{<k}: \Sigma^{<k}\to \Cat$ and $j^{<k}:\mathcal F(\sigma)\to \mathcal C$ to be the restriction of $\mathcal F:\Sigma\to \Cat$, $j$ to this subcategory.  
    Then consider the diagram $\Sigma'$, whose
    \begin{itemize}
        \item objects of $\Sigma'$ are  $\Ob(\Sigma^k)\cup\{\bullet\}$
        \item arrows of $\Sigma'$ are given by $\hom(\sigma,\bullet)=\bigcup_{\tau\in \Sigma^{<k}}\hom(\sigma, \tau)$.  
    \end{itemize}
    and the functor $\mathcal F': \Sigma'\to \Cat$ given by 
    \begin{itemize}
        \item $\mathcal F'(\sigma) = \mathcal F(\sigma)$ when $\sigma\in \Sigma^k$, and $\mathcal F'(\bullet) = \Grot(\mathcal F^{<k}:\Sigma^{<k}\to \Cat)$.
        \item When $f: \sigma\to \bullet$ corresponds to a morphism $\tilde f: \sigma\to \tau$ in $\Sigma$, the functor $\mathcal F'(f): \mathcal F'(\sigma)\to \mathcal F'(\bullet)$ is defined by the composition 
        \[\mathcal F'(\sigma)=\mathcal F(\sigma)\to \mathcal F(\tau)=\mathcal F'(\tau)\to \Grot(\mathcal F^{< k})=\mathcal F'(\bullet),\]
    \end{itemize}
    and consider the functor $j': \Grot(\mathcal F':\Sigma'\to \Cat)\to \mathcal C$  making the following diagram commute:
    \[
        \begin{tikzcd}
            \Grot(\mathcal F':\Sigma'\to \Cat) \arrow[equals]{r} \arrow{dr}{j'} & \Grot(\mathcal F:\Sigma\to \Cat)\arrow{d}{j} \\
            & \mathcal C
        \end{tikzcd}.
    \]
    Observe that $D(\Sigma')=1$ and $D(\Sigma^{<k})\leq k-1$. By induction, we have 
    \begin{align*}
        \Rdim^R(j)\leq&D(\Sigma')+\left(\max_{\sigma\in \Sigma^k}\Rdim^R(j_\sigma:\mathcal F(\sigma)\to \mathcal C) + \right)+\Rdim^R(j')\\ 
        \leq&  1+\left(\max_{\sigma\in \Sigma^k}\Rdim^R(j_\sigma:\mathcal F(\sigma)\to \mathcal C)\right)\\
        &+D(\Sigma^{<k})+\sum_{k=0}^{D(\Sigma^{<k})}\left(\max_{\sigma \st h(\sigma)=k} \Rdim^R(j_\sigma: \mathcal F(\sigma)\to \mathcal C) \right)\\
    \leq& D(\Sigma)+ \sum_{i=1}^{D(\Sigma)}\left(\max_{\sigma \st h(\sigma)=i} \Rdim^R(j_\sigma: \mathcal F(\sigma)\to \Cat)\right).
    \end{align*}
    
    It suffices therefore to prove the base case where the depth is 1. For each $j_\sigma: \mathcal F(\sigma)\to C$, let $G_\sigma\subset C$ be the element realizing the Rouquier dimension, i.e. $\langle G_\sigma\rangle_{\Rdim^R(j_\sigma)+1}\supset \Im(j_\sigma)$, and let $i_\sigma: \mathcal F(\sigma)\to \Grot(\mathcal F:\Sigma\to \Cat)$ be the inclusion. When $\Sigma=\sigma\to \tau$, we can generalize \cite[Lemma 2.3]{bai2021rouquier} to obtain
\begin{align*}
     \langle \Im(j:\Grot(\mathcal F(\sigma)\to \mathcal F(\tau)) )\rangle^R =& 
    \left\langle  j \left(\langle i( \mathcal F(\sigma) ) \rangle^R \ast \langle i(\mathcal F(\tau) )\rangle^R\right)\right\rangle^R\\
    =& \langle j(\langle  i (\mathcal F(\sigma)) \rangle^R) * j( \langle i(\mathcal F(\tau) )\rangle^R)\rangle\\    =& \langle \langle  j_\sigma (\mathcal F(\sigma)) \rangle^R \ast  \langle j_\tau(\mathcal F(\tau) )\rangle^R\rangle\\
    \subset& \langle G_\sigma\rangle^R_{k_1+1} \ast \langle G_\tau\rangle_{k_2+1}^R
    \subset  \langle G_\sigma\oplus G_\tau\rangle^R_{k_1+k_2+2}
\end{align*}
So that 
\begin{align*}
    \Rdim^R(\Grot(\mathcal{F}(\sigma)\to \mathcal{F}(\tau)))= &1+\Rdim^R(\mathcal F(\sigma))+\Rdim^R(\mathcal F(\tau))\\
    = &D(\Sigma)+\Rdim^R(\mathcal F(\sigma))+\Rdim^R(\mathcal F(\tau))
\end{align*}
as needed.
\end{proof}
 
\section{Symplectic tools: resolutions and LS-coverings in Liouville domains}
\label{sec:symplecticTools}
In this section, we will develop tools to interpret the previous categorical constructions from the viewpoint of symplectic geometry. 
First, we will establish notation and recall certain results about wrapped Fukaya categories of Liouville domains that we will need. In \cref{subsec:cobordism}, we review a geometric interpretation of resolving Lagrangian submanifolds in the wrapped Fukaya category using Lagrangian cobordisms. In particular, \cref{cor:Weinsteinresolve} provides an upper bound for the resolution time of a Lagrangian submanifold in a Weinstein domain by cocores of the skeleton. Finally, in \cref{subsec:sectorialLS}, we introduce a geometric analog of \cref{def:triangulatedLS}. 

\subsection{Background: Liouville domains and wrapped Fukaya categories}
In this section, we provide some background on Liouville domains and their wrapped Fukaya categories. The wrapped Fukaya category was first defined in \cite{abouzaid2010open} and has been extensively studied. We will use the version of the wrapped Fukaya category as defined in \cite{ganatra2017covariantly,ganatra2018microlocal}.

\begin{definition}[Liouville Domain and Manifold]
    A \emph{Liouville domain} is a pair $(X_0, \lambda)$ where
    \begin{itemize}
        \item $X_0$ is a compact manifold with boundary,
        \item $\lambda$ is a 1-form so that $\omega:=d\lambda$ is symplectic and,
        \item The Liouville vector field $Z$ satisfying $\iota_Z\omega = \lambda$ points outwards along the boundary of $X_0$.
    \end{itemize}
    A \emph{Liouville manifold} is an exact symplectic manifold $(X, \lambda)$ containing a Liouville domain $X_0$ as an exact symplectic submanifold such that the positive Liouville flow out from $\partial X_0$ is defined for all time and induces a diffeomorphism $X_0 \cup \left(\partial X_0 \times \RR_{\geq 0}\right) \to X$. 
\end{definition}

If $(X, \lambda)$ is a Liouville manifold, a Lagrangian submanifold $L\subset X$ is \emph{conical near the boundary} if it is parallel to $Z$ in a neighborhood of the boundary of $X$. The \emph{wrapped Fukaya category} $\mathcal W(X, \lambda)$, as described in \cite[Section 3]{ganatra2017covariantly}, is an $A_\infty$ category whose objects are supported on exact and conical Lagrangian submanifolds of $X$, morphisms are given by wrapped Floer cohomology groups with coefficients in a commutative ring $R$, and compositions are defined by counts of $J$-holomorphic polygons. For any compactly supported function $f$, we note that $\mathcal W(X, \lambda)$ and $\mathcal W(X, \lambda+df)$ are quasi-equivalent \cite[Lemma 3.41]{ganatra2017covariantly}. When the form $\lambda$ is understood, we will suppress it from our notation. In order to define $\mathcal{W}(X)$ as a $\ZZ$-graded category linear over an arbitrary ring $R$, one must choose extra grading/orientation data as in \cite[Section 5.3]{ganatra2018microlocal}. One way to specify this data is through a (stable) Lagrangian polarization on $X$. The cotangent bundle $T^*M$ of a smooth manifold has a canonical polarization given by the cotangent fibers.

The \emph{skeleton} (or core) $\LL_X$ of a Liouville manifold $X$, is the set of points of $X$ which do not escape under the Liouville flow. A \emph{Weinstein manifold} is a Liouville manifold whose Liouville vector field is gradient-like with respect to a proper Morse function. The Morse function allows a Weinstein manifold to be built from handles that all have index $\leq \dim(X)/2$, and the skeleton is the collection of attaching cores. See \cite{cieliebak2012stein}. In particular, $\LL_X$ is \emph{mostly Lagrangian} in the sense of \cite[Definition 1.7]{ganatra2018sectorial}.

\subsubsection{Homotopy colimit formula}

In order to glue wrapped Fukaya categories, \cite[Definition 2.4]{ganatra2017covariantly} introduces the notion of a \emph{Liouville sector}. A Liouville sector is a Liouville manifold $X$ with boundary $\partial X$; the boundary is equipped with the additional data of a function $I \colon \partial X \to \RR$ satisfying certain properties allowing for holomorphic curves to be constrained to lie away from the boundary of $X$. 
An \emph{inclusion} of a Liouville sector is a proper map $i: X \to X'$ such that $i^*\lambda' = \lambda + df$ for some compactly supported functions $f$. 
A \emph{Weinstein sector} is a Liouville sector whose corresponding sutured Liouville manifold $(\overline X, F_0)$ from \cite[Lemma 2.32]{ganatra2017covariantly} consists of a Weinstein manifold $\overline X$ with Weinstein hypersurface $F_0$ (up to deformation). 

\begin{example}
    Let $U\subset M$ be any codimension 0 submanifold with boundary. Then $T^*U$ is an example of a Weinstein sector, and $T^*U\subset T^*M$ is a proper inclusion. 
\end{example}

Liouville sectors also have well-defined wrapped Fukaya categories, which we will also denote by $\mathcal W(X)$. The inclusion $X'\into X$ of a Liouville sector induces a functor $\mathcal W(X')\to \mathcal W(X)$ as defined in \cite[Section 3.6]{ganatra2017covariantly}.
In \cite[Theorem 1.35]{ganatra2018sectorial}, it is shown that Weinstein sectors satisfy a descent property. More precisely, if $X=\bigcup_i X_i$ is a Weinstein sectorial covering \cite[Definition 1.32]{ganatra2018sectorial} of a Liouville sector $X$ by subsectors, the wrapped Fukaya category can be computed as the homotopy colimit of the induced diagram of wrapped Fukaya categories. That is, if we set $X_I=\cap_{i\in I} X_i$, there is a fully faithful functor
\begin{equation} \label{eq:hocolim}
    \hocolim(\mathcal W(X_I)) \to \mathcal W(X)
\end{equation}
whose image resolves every object.

\subsubsection{K\"unneth formula and Diagonal bimodule}
\label{eq:cotangentCorrespondences}
Additionally, partially wrapped Fukaya categories satisfy a K\"unneth formula, in that there is a fully faithful functor 
\begin{equation} \label{eq:kunneth}
    \mathcal W(X_1)\tensor \mathcal W(X_2)\to \mathcal W(X_1\times X_2)
\end{equation}
for any Liouville sectors $X_1, X_2$ by \cite[Theorem 1.5]{ganatra2018sectorial}.
When $X_1$ and $X_2$ are Weinstein, every object of $\mathcal{W}(X_1 \times X_2)$ is resolved by objects in the image of \eqref{eq:kunneth} by \cite[Corollary 1.18]{ganatra2018sectorial}.
Under this identification, the diagonal Lagrangian $\Delta \subset \mathcal W( X^- \times X)$ is identified with the diagonal bimodule as shown independently in \cite[Theorem 7.2]{bai2021rouquier} and \cite[Proposition 1.6]{ganatra2018sectorial} where $X^-$ is the Liouville manifold with the same underlying manifold as $X$ but Liouville form $-\lambda$.

In the context of cotangent bundles, there is a symplectic involution ${}^-(-): (T^*M)^-\times T^*M\to T^*M\times T^*M=T^*(M\times M)$, which is given by multiplication by $-1$ on the fiber coordinate of the first factor 
\begin{align*}
    {}^-(-):T^*M_0\times T^*M_1\to (T^*M_0)^-\times T^*M_1\\
    (q_0, p_0, q_1, p_1)\to (q_0, -p_0, q_1, p_1)
\end{align*}
where $q_i$ are coordinates on $M_i$ and $p_i = dq_i$ are the corresponding cotangent fiber coordinates.
Under this identification, the diagonal Lagrangian $\Delta$ is sent to $N^*\Delta_M$, where $\Delta_M\subset M\times M$ is the diagonal. Note that we do not check if the canonical diagonal grading on $\Delta$ is sent to the canonical conormal grading on $N^*\Delta_M$; however, this grading information will never appear in our arguments.  
\subsection{Symplectic interpretation of efficient resolutions}
\label{subsec:cobordism}

Lagrangian cobordisms provide a geometric interpretation of ``iterated mapping cones'' in the Fukaya category.
\begin{definition}[\cite{arnol1980lagrange}]
    \label{def:lagrangianCobordism}
    Given Lagrangian submanifolds $\{L_i\subset X\}_{i=0}^k$, a Lagrangian cobordism $K:(L_1, \ldots, L_k)\rightsquigarrow  L_0$ is a Lagrangian submanifold $K\subset X\times \CC$ and has the form
    \[K\setminus \pi_\CC^{-1}(V)= \bigcup_{j=1}^k (L_j\times (j+\RR_{>0})).\]
    outside of a compact set $V\subset \CC$.
\end{definition}
Lagrangian cobordisms provide a geometric decomposition of $L_0$ in terms of the $L_i$. They also give a Floer-theoretic decomposition of $L_0$ in terms of the $L_i$.  
\begin{thm}[\cite{biran2013lagrangian,bosshard2022lagrangian}] \label{thm:birancornea}
    Given an exact\footnote{In \cite{biran2013lagrangian}, the cobordisms are only required to be monotone. In \cite{bosshard2022lagrangian}, the definition of Lagrangian cobordism needs to be slightly modified, as exact Lagrangians for any Liouville form on $X\times \CC$ will fail to satisfy \cref{def:lagrangianCobordism}. See \cite[Definition 3.2]{bosshard2022lagrangian} for the precise definition.} embedded Lagrangian cobordism $K:(L_1, \ldots, L_k)\rightsquigarrow L_0$, there exists an iterated mapping cone decomposition in the (pre-triangulated closure of the) wrapped Fukaya category 
    \[L_0 \cong [L_k \to \cdots \to L_1].\]
\end{thm}

In \cite[Propsition C.2]{hanlon2022aspects}, it was observed that Lagrangian cobordisms pass through geometric composition of Lagrangian correspondences in the following sense: given a Lagrangian cobordism of correspondences $K:(L_1, \ldots, L_k)\rightsquigarrow L_0$ (so that $L_i\in X_1^-\times X_2$), and $L'\in X_1$, there exists a Lagrangian cobordism $K\circ L': (L_1\circ L', \ldots, L_k\circ L')\rightsquigarrow L_0\circ L'$.

Unfortunately, the resulting Lagrangian correspondence may not be exact. If we overlook this detail (which we expect can be overcome by appropriately including bounding cochains), we obtain the following symplectic geometric interpretation of \cref{prop:diagRestoRes}. 
First, observe that when $X_1$ and $X_2$ are totally stopped sectors, then the geometric composition of a product cocore $\cocore$ in $X_1^-\times X_2$ and admissible Lagrangian $L$ of $X_1$ is a complex of cocores.
    
Now suppose that we have a Lagrangian cobordism $K:(\cocore_1, \ldots, \cocore_k)\rightsquigarrow \Gamma_\Delta$, where $\Gamma_\Delta\subset X^-_1\times X_1$ is the graph of the diagonal, and the $\cocore_i$ are the union of product cocores. Then we obtain a (possibly non-exact) Lagrangian cobordism 
\[ (\cocore_1\circ L , \ldots, \cocore_k \circ L )\rightsquigarrow \Gamma_\Delta\circ L = L\]
Modulo the exactness hypothesis, we then apply \cref{thm:birancornea} to prove that the collection $\{\cocore_i\}$ generate $L$ in time $k$.
    
For our applications to Liouville domains, we do not need to employ geometric composition; it suffices to use the fact that $\Gamma_\Delta$ is the diagonal bimodule. 

\subsubsection{Lagrangian cobordisms in Liouville domains}
The main tool that we will employ to bound generation time of a Lagrangian is to construct a Lagrangian cobordism that resolves a Lagrangian submanifold into cocores.
The following proposition\footnote{A version of this proposition was given in \cite[Appendix B]{hanlon2022aspects}. However, the proof contains a minor error corrected here.}  generalizes \cite[Proposition 1.37]{ganatra2018sectorial}, which required a ``thinness" condition.
\begin{prop}
    \label{prop:lagrangianCobordism}
    Let $L\subset X$ be an exact Lagrangian submanifold with primitive $df=\lambda_X|_L$. Suppose that there is a subdomain $Y\subset X$ so that:
    \begin{enumerate}
        \item $X$ is the completion of $Y$,
        \item $L\cap Y= L_1\cup \cdots \cup L_k$, where the $L_i$ are not necessarily connected Lagrangians in $Y$ with Legendrian boundary. 
        \item There exist constants $c_1< \ldots < c_k$ so that $f(\partial L_i)= c_i$
    \end{enumerate}
    Then there is an exact Lagrangian cobordism 
    \[K: (L_1, \ldots, L_k)\rightsquigarrow L.\]
    As a consequence, there exists an iterated mapping cone decomposition in the pre-triangulated closure of $\mathcal W(X))$
    \[L\simeq [L_k\to \cdots\to L_1].\]
\end{prop}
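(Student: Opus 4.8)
The plan is to construct the cobordism $K$ by hand in $X \times \CC$ and then invoke \Cref{thm:birancornea} to extract the iterated mapping cone. The key idea is that the completion structure $X = \wh Y$ gives us a collar $\partial Y \times [1,\infty)$ with coordinate $r$ on which $\lambda_X = r\,\alpha$ for the contact form $\alpha = \lambda_X|_{\partial Y}$, and along this collar $L$ is a union of pieces of the form $\Lambda_i \times [1,\infty)$ where $\Lambda_i = \partial L_i$ is Legendrian with $f(\Lambda_i) = c_i$. The cobordism will be built so that, over the cylindrical end, it looks like a ``Lagrangian suspension'' or a union of translated slices $L_i \times (\gamma_i)$ for suitable curves $\gamma_i \subset \CC$ ending in the horizontal rays $i + \RR_{>0}$, while over a compact region the slices get merged into $L$. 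Concretely, I would first treat the model case: take the piece $L_i$ and its conical extension $\Lambda_i \times [1,\infty) \subset \partial Y \times [1,\infty)$, and describe a Lagrangian in $(\partial Y \times \RR_{>0}) \times \CC$ that interpolates between $L_i$ sitting over a curve near the origin and $\Lambda_i \times [1,\infty)$ sitting over the ray $i + \RR_{>0}$; the ordering constants $c_1 < \cdots < c_k$ are exactly what allows these $k$ sheets to be stacked without intersection by shifting in the $\CC$-direction according to the primitive values.

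The heart of the construction is to make this precise and exact simultaneously. I would use the standard trick for producing exact Lagrangian cobordisms from Legendrian isotopies/concatenations: a curve $\gamma_i(t) \subset \CC$ combined with a function on $[1,\infty)$ governing the $r$-coordinate, chosen so that the resulting surface is Lagrangian for $d\lambda_X \oplus \omega_{\CC}$, and so that the primitive of the restricted $1$-form is globally defined (this is where exactness, as opposed to mere monotonicity, must be checked, and it is the point where the earlier proof had its ``minor error''). The shifting in the imaginary direction of $\CC$ must be compatible with the three hypotheses: hypothesis (3), $f(\partial L_i) = c_i$, ensures the primitives match up across the gluing region near the compact set $V$, so that the $k$ ends assemble into the single Lagrangian $L$ without introducing a discontinuity in the primitive; hypothesis (2) guarantees each $L_i$ has Legendrian (hence conical-compatible) boundary so the cylindrical end is a genuine product outside $V$; hypothesis (1) is what lets us extend $Y$ to all of $X$ and place the cobordism in $X \times \CC$ rather than just $Y \times \CC$.

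After the geometric construction, the second sentence of the conclusion is essentially immediate: the cobordism $K\colon (L_1,\dots,L_k) \rightsquigarrow L$ is exact and embedded, and $X$ (being a completion of a Liouville domain, hence a Liouville manifold) is a setting in which \Cref{thm:birancornea} applies, so we directly obtain $L \simeq [L_k \to \cdots \to L_1]$ in the pre-triangulated closure of $\mathcal W(X)$. One should also note that each $L_i$, having Legendrian boundary in $Y$, can be completed to an admissible (exact, conical) object of $\mathcal W(X)$, so the statement is about honest objects of the category.

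I expect the main obstacle to be verifying \emph{exactness} of $K$ — keeping careful track of primitives through the region where the $k$ conical sheets of $L$ get separated out over distinct rays in $\CC$, and confirming that the interpolating curves $\gamma_i$ and the $r$-profile functions can be chosen so that no spurious periods are introduced and so that the sheets remain pairwise disjoint (this is precisely where the ordering $c_1 < \cdots < c_k$ is used, and presumably where the ``minor error'' in the earlier version occurred). A secondary technical point is ensuring the cobordism is conical near the boundary of $X \times \CC$ in the appropriate sense and that it is a genuine smooth embedded submanifold across the gluing locus; these are standard but must be done with the correct cutoff functions.
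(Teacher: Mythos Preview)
Your outline contains a genuine gap. You assert that ``along this collar $L$ is a union of pieces of the form $\Lambda_i \times [1,\infty)$'', i.e.\ that $L$ is already conical immediately outside $Y$ and hence globally equals $\bar L_1 \sqcup \cdots \sqcup \bar L_k$. This is not among the hypotheses, and in the main applications it is false: in \cref{cor:Weinsteinresolve}, for instance, $L$ is a \emph{connected} Lagrangian (a zero section, a diagonal) while the $L_i$ are small cocore disks near its isolated intersections with the skeleton. If $L$ really were the disjoint union of the $\bar L_i$, your sheet-by-sheet recipe $\bigcup_i \bar L_i \times \gamma_i$ would give a trivial product cobordism and hence $L \simeq \bigoplus_i \bar L_i$ with zero differential --- much stronger than what is claimed and not true in the cases of interest. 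The content of the proposition is precisely to pass from a connected $L$ to the disjoint pieces $\bar L_i$; your plan does not supply a mechanism for this beyond the phrase ``over a compact region the slices get merged into $L$.''

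The paper accomplishes the merging by a single global construction rather than piece by piece. After modifying the Liouville form so that $\lambda_X|_{L_i}=0$ (making each $L_i$ tangent to the Liouville field and hence flow-invariant), one applies the time-reparameterized Liouville flow $\phi_t$ to all of $L$: for $t\gg 1$ the image $\phi_t(L)$ restricted to any fixed compact region coincides with $\bigcup_i \bar L_i$, because the $L_i$ stay put while everything $L$ does outside $Y$ is pushed to infinity. The cobordism is then the Lagrangian suspension of this one isotopy, parameterized by $(q,t)\mapsto\bigl(\phi_{\rho(t)}(q),\, t+\sqrt{-1}\,\rho'(t)f(q)\bigr)$ for a suitable cutoff $\rho$; the primitive $F(q,t)=f(q)\bigl(\rho(t)-t\rho'(t)\bigr)$ is computed directly and shown to vanish where required. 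At the output ends the imaginary coordinate becomes $f(q)$, which on $\bar L_i$ equals $c_i$ --- this is how the constants enter as the heights of the ends and why their distinctness matters. Your ``curves $\gamma_i$ plus $r$-profile'' description captures what the output ends look like but does not produce the interpolating isotopy; that role is played by the Liouville flow, and it is the ingredient you are missing.
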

\begin{remark}
The fact that the $L_i$ are allowed to be disconnected, as long as $f(\partial L_i)$ is a fixed constant independent of the component, will be crucial for improving the upper bound on Rouquier dimension by the number of critical points to the number of critical values. 
We also observe that the disjoint union $L\coprod K$ is isomorphic to the direct sum of objects $L \oplus K$ in the wrapped Fukaya category. 
\end{remark}

\begin{proof}
    Let $U$ be a small neighborhood of $\partial Y$, and let $V$ be a small neighborhood of $\partial X$.
    By applying a small Hamiltonian isotopy, we can arrange for $f|_{U\cap L_i}=c_i$. 
    We first show that we may reduce to the case where $\lambda_X|_{L_i}=0$. Consider the function $f_i:= f|_{L_i}-c_i: L_i\to \RR$. This function satisfies the property that $f_i|_{(L_i\cap U)}=0$. In particular, we can find functions $\tilde f_i \colon  Y\to \RR$ supported near the $L_i$ which satisfy the property that \begin{align*}
        \tilde f_i|_{L_i}=f_i && \tilde f_i|_{L_j}=0 \text{ if $i\neq j$} && \tilde f_i |_{U}=0
    \end{align*}
    We then consider the modified Liouville form 
    \[\lambda_X' = \lambda_X -\sum_i d\tilde f_i.\]
    which agrees with $\lambda_X$ on $Y\setminus X$, and therefore is Liouville isotopic to $\lambda_X$.
    For this choice of Liouville structure, the Lagrangians $L_i$ are \emph{strictly exact} in the sense that 
    \[ \lambda_X' |_{L_i}=0.\] 
    Observe that $f':=f-\sum_i f_i : L \to \RR$ is a primitive for $c_i$, and that we still have $f'|_{\partial L_i}=c_i$.
    
    We now assume that $\lambda|_{L_i}=0$. For $t\geq 1$, let $\phi_t \colon  \bar X\to \bar X $ be the time-reparameterized Liouville flow satisfying
    \[\phi_t^* \lambda_X = t \cdot \lambda_X\]
    i.e., in the example of the cotangent bundle, this is simply scalar multiplication by $t$ on the fibers.
    Since the $L_i$ are parallel to the Liouville vector field, for all $t\in [1, \infty)$  $\phi_t(L_i)\cap Y = L\cap Y$.
    So there exists $t_1$ with the property that for $t>t_1$ we have 
    \[\phi_{t> t_1 }(L)\cap X = \bigcup_{i=1}^k \bar L_i\cap X.\]
    where $\bar L_i\subset \bar X$ is the cylindrical completion of the $L_i$ in the completion of $X$.
    Let $t_0>1$ be a constant chosen so that $\phi_{t}(L)\cap V= L\cap V$ for all $t\in [1, t_0]$. In particular, for all $(q, t)$ such that $t<t_0$ and $i_t(q)\in V$, we have $f(q)=0$.
    Now consider a smooth non-decreasing $\rho \colon  \RR\to [1, \infty) $ which satisfies:
    \begin{align*}
         \rho(t)=1 \text{ for all $t<0$} &&  \rho(t)=t\text{ for all $t>t_0$}
    \end{align*}
    Consider the Lagrangian suspension cobordism $K^{pre}$ parameterized by 
    \begin{align*}
        j: \bar L\times \RR\to& \bar X\times \CC\\
        (q, t) \mapsto& (i_{\rho(t)}(q), t+\sqrt{-1}\rho'(s)f(q))
    \end{align*}
    The Liouville form we choose for $\CC$ is $s dt$, where $z=t - \sqrt{-1} s$. Then 
    \[j^*\lambda_{X\times \CC}= i_{\rho}^* \lambda_X - t(\rho'df+\rho'' f dt)\]
    has primitive $F(q,t):=f(q)(\rho(t)-t\cdot \rho'(t))$. We now show that $F|_{V\times \CC}=0$. We break into two cases:
    \begin{description}
        \item[Case 1: $t<t_0$.] When $t<t_0$,  $i_{\rho(t)}(q)\in (V\times \CC)$ implies that $i_{\rho(t)}(q)\in V$ and in particular $f(q)=0$, so $F(q, t)=0$.
        \item[Case 2: $t\geq t_0$.] When $t>t_0$, observe that $\rho(t)=t$, so $f(q)(\rho(t)-t\cdot \rho'(t))=0$.
    \end{description}
    Thus, $K^{pre}$ has Legendrian boundary in $\partial(X\times \CC)$, and its restriction gives a conical exact Lagrangian submanifold of $X\times \CC$. We now prove that this Lagrangian is a Lagrangian cobordism. Observe that we have 
    \begin{align*}
        K|_{t<t_0}=& L\times \RR_{>t_1}\subset X\times \CC\\
        K|_{t>t_1}=&\bigcup_{j=1}^k (L_j\times (c_j+\RR_{>0}))
    \end{align*}
    so $K$ is a Lagrangian cobordism with the desired ends. 
    
\end{proof}

\subsubsection{Perturbing the Skeleton}
The number of distinct values of the primitive $f: L\to \RR$ on $\partial L_i$ control the length of the resolution for $L$ by $\{L_i\}$. By taking a limit over Liouville subdomains limiting to the skeleton, this becomes the value of  $f$ on the intersection.
In the setting where $L$ is the graph of $df$ in $X=T^*M$, $f(\LL_X\cap L)$ gives the critical values of $f$, motivating the following definition.
\begin{definition}\label{def: lagrangian primitive action}
    Let $X$ be a Weinstein manifold with Liouville form $\lambda$ and skeleton $\LL_X$. Given $L$ a Lagrangian submanifold which intersects that skeleton at its Lagrangian stratum,
    the action values are \[\actval(L,f):=\{f(x) \st x\in L\cap \LL_X\}\] where $f: L\to \RR$ is a primitive for the Liouville form restricted to $L$.
\end{definition}
Since $|\actval(L, f)|$ is independent of the choice of $f$ when $L$ is connected, we will usually write $|\actval(L)|$ for this quantity. 

We recall the definition of a generalized cocore from \cite{ganatra2018sectorial}. Given $x\in \LL_X$, a generalized cocore $\cocore(x)$ is an exact cylindrical Lagrangian intersecting $\LL_X$ exactly once transversely at $x$. Observe that $\cocore(x)$ need not be strictly exact (as opposed to a cocore). 
In a general Liouville manifold, there is no reason for a point $x \in \LL_X$ to admit a generalized cocore.
In our previous example of $T^*M$, every point $x\in\LL_X= M$ has a generalized cocore which is $T^*_xM$. 
When $(X, \omega, Z)$ is a Weinstein domain, there exists a generalized cocore passing through every point belonging to the Lagrangian strata of the skeleton. 
To see this, note that the cotangent fiber through $x$ in the handle $H_x$ containing $x$ is a generalized cocore passing through $x$ if no further handles are attached after $H_x$. 
If additional handles are attached after $H_x$, the cotangent fiber can be perturbed to avoid their attaching Legendrians by a general position argument.

\begin{prop}
    Let $X$ be a Weinstein manifold and let $L$ be a generalized cocore. Then there exists a subdomain $Y \subset X$ completing to $X$ and a conical exact Lagrangian $\cocore(y)$ in $X$ with the following properties:
    \begin{itemize}
        \item $\cocore(y)$ and $L$ are Hamiltonian isotopic via a compactly supported Hamiltonian $H$;
        \item $H|_{\LL_X}=0$; and
        \item $\cocore(y)|_Y$ is a (not generalized) cocore at a point $y \in \LL_X$.
    \end{itemize}
\end{prop}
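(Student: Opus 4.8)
The plan is to reduce to the model case of a cotangent bundle, where a generalized cocore is literally a cotangent fiber $T^*_x M$, and then to recognize that after a compactly supported Hamiltonian isotopy supported away from the skeleton one may assume the cocore is cylindrical near the skeleton, hence restricts to an honest (strictly exact) cocore of a Liouville subdomain. More precisely, since $L$ is a generalized cocore, it meets $\LL_X$ transversally at a single point $x$ lying in a Lagrangian stratum. First I would invoke the discussion preceding the statement: for the single point $x$ there is an isotopic choice of Liouville form on $X$ and a Liouville subdomain $B^*_\eps U \into X$ with the image of $U$ inside $\LL_X$ and $x \in U$, such that $T^*_x M$ (in these local coordinates) is a cotangent fiber invariant under the Liouville flow. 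This already produces a genuine cocore $\cocore(y)$ (with $y = x$) of the subdomain $Y = B^*_\eps U$ whose completion is $X$; the issue is that $L$ and $\cocore(y)$ need only agree near $x$ up to the chosen isotopy, not on the nose.

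The key step is therefore to produce the compactly supported Hamiltonian $H$ interpolating between $L$ and $\cocore(y)$ with $H|_{\LL_X} = 0$. Here I would argue as follows. Both $L$ and $\cocore(y)$ are conical exact Lagrangians through $x$, transverse to $\LL_X$ there, and tangent to the same Lagrangian subspace direction (the fiber direction) at $x$ by construction. In a Darboux/Weinstein neighborhood of $x$ modeled on $T^*(\RR^n)$ with the skeleton locally the zero section, both Lagrangians are graphs of closed $1$-forms over a neighborhood of $x$ in the fiber $T^*_x$ (after possibly shrinking), and since they are exact these $1$-forms are exact; one can linearly interpolate their primitives and extend by a cutoff to get a compactly supported Hamiltonian isotopy, carried out so that the isotopy fixes the skeleton pointwise — this is possible precisely because the deformation takes place in the fiber directions transverse to $\LL_X$ and can be chosen to vanish to appropriate order along the skeleton. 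Away from this neighborhood, $L$ and $\cocore(y)$ already coincide with their common cylindrical ends, so no further isotopy is needed there. Concatenating, one obtains a globally defined compactly supported Hamiltonian $H$ with $H|_{\LL_X} = 0$ and $\phi^1_H(L) = \cocore(y)$, and by construction $\cocore(y)|_Y$ is a cocore at $y \in \LL_X$.

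The main obstacle I expect is the requirement $H|_{\LL_X} = 0$ together with compact support: one must simultaneously ensure the Hamiltonian isotopy moves $L$ to a cylindrical cocore, vanishes on the (possibly singular, non-compact) skeleton, and is compactly supported. The delicate point is that the skeleton of a Weinstein manifold need not be smooth, so one cannot simply appeal to a Weinstein neighborhood theorem globally; instead one works only near the single smooth point $x$ (which lies in a Lagrangian stratum, hence has a genuine Lagrangian neighborhood), and uses that $L$ and $\cocore(y)$ already agree outside a small ball. I would also take care that after the isotopy $\cocore(y)$ remains exact and conical — this follows because the isotopy is compactly supported, hence does not alter the cylindrical end, and Hamiltonian isotopy preserves exactness of Lagrangians. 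A secondary technical point is verifying that the local primitives can be chosen to vanish along the skeleton, which amounts to normalizing the primitive of $\lambda_X|_{\cocore(y)}$ and of $\lambda_X|_L$ so that both vanish at $x$; this is harmless since primitives are only defined up to a constant.
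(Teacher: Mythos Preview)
There is a genuine gap in your approach: you have misidentified what $\cocore(y)$ should be. You take $\cocore(y)$ to be the cotangent fiber $T^*_x M$ in the local model and then try to isotope $L$ onto it, but a generalized cocore $L$ need not agree with the cotangent fiber either at first order at $x$ or at infinity. Transversality of $L$ with $\LL_X$ at $x$ only says $T_xL$ is \emph{some} Lagrangian complement to $T_x\LL_X$; there is no reason it equals the fiber direction, so your assertion that $L$ and $\cocore(y)$ are ``tangent to the same Lagrangian subspace direction (the fiber direction) at $x$ by construction'' is false in general. Likewise, the cylindrical end of $L$ can be arbitrary, so ``away from this neighborhood, $L$ and $\cocore(y)$ already coincide with their common cylindrical ends'' is also unjustified. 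For the same reason, a compactly supported Hamiltonian cannot carry $L$ to the cotangent fiber $T^*_xM$ unless their Legendrian ends already agree.

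The paper's construction goes in the opposite direction: rather than moving $L$ to a global cotangent fiber, one modifies $L$ only in a small polydisk around $x$ to produce $\cocore(y)$. In the local model $L$ is the graph of $df$ for a nondegenerate quadratic $f$ (hence a \emph{linear} Lagrangian transverse to the zero section, typically not the fiber), and one replaces each coordinate factor by an explicit curve $\gamma$ that coincides with the linear profile near the boundary of the polydisk but is vertical (a cotangent fiber) in a smaller neighborhood. The resulting $\cocore(y)$ agrees with $L$ outside the polydisk, is Hamiltonian isotopic to $L$ by a compactly supported isotopy with $H|_{\LL_X}=0$ (the curve $\gamma$ is chosen so that its primitive matches that of $L$ both at the boundary and on the skeleton), and restricts to an honest cotangent fiber inside a sufficiently small subdomain $Y$ around the skeleton. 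Note also that $Y$ must be a neighborhood of the \emph{entire} skeleton (so that it completes to $X$), not just the local ball $B^*_\eps U$ you propose.
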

\begin{proof}
After modifying the Liouville form in a neighborhood of the skeleton, we can take $Y\subset X$ a small subdomain around the skeleton satisfying the first three properties of \cref{prop:lagrangianCobordism}, and additionally so that 
\begin{itemize}
    \item locally around $x\in L\cap \LL_X$ it is modelled on a small Weinstein neighborhood $B^*_\eps U$ where $U\subset \LL_X$ and 
    \item  Furthermore, at each point $x\in L\cap \LL_X$ we can identify $U$ with a small ball in $\RR^n$ with $x$ at the origin, so that $L$ restricted to this neighborhood is parameterized by $i:x\mapsto (x, df).$
\end{itemize}
Because the intersection is transverse, we can choose (in appropriate coordinates) $f=x_1^2+\cdots x_k^2 - x_{k+1}^2- \cdots x_{n}^2$, so that $L$ is modeled on a linear Lagrangian subspace of $T^*(-\eps,\eps)^n$ with the standard symplectic form and primitive, as drawn in \cref{fig:cocoreProfile}
Let $\gamma$ be the blue curve drawn in \cref{fig:cocoreProfile}, which is chosen so that its primitive agrees with the red curve at the boundary and at the skeleton.
    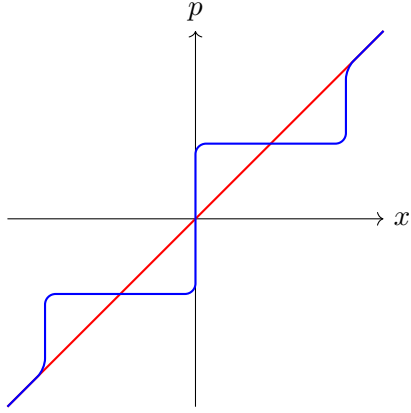
\begin{figure}
        \centering
        \begin{tikzpicture}

\draw[->] (-2.5,0) -- (2.5,0) node[right] {$x$};
\draw[->] (0,-2.5) -- (0,2.5)node[above] {$p$};
\draw[red, thick] (-2.5,-2.5) -- (2.5,2.5);
\draw[rounded corners,blue,thick] (-2.5,-2.5) -- (-2,-2) -- (-2,-1) -- (-1,-1) -- (0,-1) -- (0,1) -- (1,1) -- (2,1) -- (2,2) -- (2.5,2.5);
\end{tikzpicture}         \caption{Replacing a transverse intersection (between the red Lagrangian and the core represented by the $x$-axis) with one restricting to a cocore (blue Lagrangian) in the polydisk $D(r/2)$.}
        \label{fig:cocoreProfile}
    \end{figure}
    Let $\bar \gamma(t)=\gamma(-t)$. Then 
    \[\cocore(y)=\underbrace{\gamma\times \cdots \gamma}_{k} \times \underbrace{\bar \gamma\times \cdots \bar \gamma}_{n-k}\subset D(r,\cdots r).\]
    The Lagrangian $\cocore(y)$ satisfies the following properties:
    \begin{itemize}
        \item agrees with $L$ outside of $D(r/2, \ldots, r/2)$,
        \item is Hamiltonian isotopic to $L$,
        \item admits a primitive $g: \cocore(y)\to \RR$ for the Liouville form which agrees with the primitive $f: L\to \RR$ outside of $D(r/2, \ldots, r/2)$.
        \item Furthermore, in a smaller Weinstein neighborhood $U=B^*_{1/8}D(r, \cdots r)$, we have $g|_{\cocore(y)\cap U}=f(0)$ and $U\cap \cocore(y)$ agrees with a cotangent fiber.   
    \end{itemize}
\end{proof}
    
In particular, for any $L$ intersecting the skeleton transversely at the Lagrangian stratum, there exists $L'$ which is Hamiltonian isotopic to $L$ and $Y\subset X$ so that $L'\cap Y$  is a collection of cocores parallel to the Liouville form and $\actval(L)=\actval(L')$.

\begin{corollary} \label{cor:Weinsteinresolve}
    Let $X$ be a Weinstein domain. Let $L\subset (X, \LL)$ be a Lagrangian submanifold which  meets $\LL_X$ transversely at its Lagrangian strata. Let $\mathcal G$ be the full subcategory of Lagrangian cocores. Then $\Res_{\mathcal G}(L)\leq |\actval(L)|-1$. 
\end{corollary}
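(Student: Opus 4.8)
The plan is to combine the two preparatory results that immediately precede the statement: the preceding proposition, which replaces a generalized cocore $L$ by a Hamiltonian-isotopic Lagrangian $L'$ that becomes a genuine (strictly exact) cocore inside a small subdomain around the skeleton while preserving action values, and \cref{prop:lagrangianCobordism}, which turns a Liouville subdomain decomposition of a Lagrangian into an iterated mapping cone in $\mathcal W(X)$. So the first step is to apply (the construction behind) the preceding proposition simultaneously at every intersection point of $L$ with $\mathfrak c_X$: after modifying the Liouville form in a neighborhood of the skeleton and applying a compactly supported Hamiltonian isotopy supported near $L\cap\mathfrak c_X$, we obtain $L'$ Hamiltonian isotopic to $L$, a subdomain $Y\subset X$ completing to $X$, such that $L'\cap Y$ is a disjoint union of cotangent-fiber cocores parallel to the Liouville flow, one through each point of $L'\cap\mathfrak c_X = L\cap\mathfrak c_X$, and with $\actval(L')=\actval(L)$. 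Since Hamiltonian-isotopic Lagrangians are isomorphic in $\mathcal W(X)$, it suffices to bound $\Res_{\mathcal G}(L')$.

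The second step is to group the cocores inside $Y$ by their action value. Let the primitive $f\colon L'\to\RR$ (for the modified Liouville form restricted to $L'$) take the distinct values $c_1<c_2<\cdots<c_m$ on $L'\cap\mathfrak c_X$, where $m=|\actval(L')|=|\actval(L)|$; note $f$ is locally constant near the intersection points by the last bullet of the preceding proposition, and by construction it takes the constant value $c_i$ on the boundary Legendrians of all the cocores sitting over action value $c_i$. Set $L'_i$ to be the (possibly disconnected) union of the cocores of $L'\cap Y$ whose action value is $c_i$; then $L'\cap Y = L'_1\cup\cdots\cup L'_m$, the $L'_i$ are Lagrangians in $Y$ with Legendrian boundary, and $f(\partial L'_i)=c_i$ with $c_1<\cdots<c_m$. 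This is exactly the hypothesis list of \cref{prop:lagrangianCobordism} — here the disconnectedness of each $L'_i$ is essential, and is precisely the point flagged in the remark following that proposition.

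The third step is to invoke \cref{prop:lagrangianCobordism} directly: it produces an exact Lagrangian cobordism $K\colon(L'_1,\dots,L'_m)\rightsquigarrow L'$ and hence, via \cref{thm:birancornea}, an iterated mapping cone decomposition
\[
L' \simeq [\,L'_m \to \cdots \to L'_1\,]
\]
in the pre-triangulated closure of $\mathcal W(X)$. Each $L'_i$ is a finite disjoint union of genuine cocores, hence isomorphic in $\mathcal W(X)$ to the direct sum of those cocores (as recorded in the remark after \cref{prop:lagrangianCobordism}), so each $L'_i\in\mathcal G$ where $\mathcal G$ is the full subcategory of Lagrangian cocores — here I use that $\mathcal G$, being a full subcategory, is taken closed under the finite direct sums appearing in $\Perf\mathcal W(X)$, or equivalently one simply records that the twisted complex above is built from the individual cocores. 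Thus $L'$ is resolved by $\mathcal G$ with $m-1$ cones, i.e. $\Res_{\mathcal G}(L')\le m-1 = |\actval(L)|-1$, and since $L\cong L'$ in $\mathcal W(X)$ the same bound holds for $L$.

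The main obstacle is the reconciliation of the modified Liouville form with the statement: \cref{prop:lagrangianCobordism} is applied on $(X,\lambda')$ where $\lambda'$ is the perturbed Liouville form from the preceding proposition, and one must know that $\mathcal W(X,\lambda')\simeq\mathcal W(X,\lambda)$ (true since the modification is by $d$ of a compactly supported function, invoking the invariance cited in the background section) and that "Lagrangian cocores of $\mathfrak c_X$'' is an intrinsically defined full subcategory unaffected by this deformation — i.e. that the cotangent fibers $T^*_{x_i}U$ produced for the deformed structure represent the same objects of $\mathcal W(X)$ as the cocores of the original Weinstein structure. This is the content of the discussion of generalized cocores preceding the corollary; the only real work beyond citing it is bookkeeping to make sure the action-value grouping survives the isotopy, which the "last bullet'' of the preceding proposition ($g|_{\cocore(y)\cap U}=f(0)$, agreeing with the original primitive outside a polydisk) is designed to guarantee.
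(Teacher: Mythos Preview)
Your proposal is correct and follows exactly the approach the paper intends: the corollary is stated immediately after the sentence ``In particular, for any $L$ intersecting the skeleton transversely at the Lagrangian stratum, there exists $L'$ which is Hamiltonian isotopic to $L$ and $Y\subset X$ so that $L'\cap Y$ is a collection of cocores parallel to the Liouville form and $\actval(L)=\actval(L')$,'' and is meant to be read as a direct consequence of that observation together with \cref{prop:lagrangianCobordism}. You have simply spelled out the grouping-by-action-value and the bookkeeping around the Liouville deformation that the paper leaves implicit.
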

In \cref{subsubsec:degenerate}, we discuss generalizations of this result to certain degenerate intersections between $L$ and $\LL_X$.

\subsection{Sectorial LS-covering}
\label{subsec:sectorialLS}
Next, we discuss a geometric version of the LS category for arbitrary sectors. 
We will apply this to cotangent bundles in \cref{subsec:lsUpperBound} and to Lefschetz fibrations in \cref{subsec:sectorialLSLefschetz}.
\begin{definition}
We say that a sectorial covering $\Sigma$ of a Weinstein manifold $X$ consisting of inclusions of sectors $\phi_0, \cdots, \phi_k: X_0, \cdots, X_k \hookrightarrow X$ is a sectorial Lusternik-Schnirelmann covering of $X$ if for any choice of coefficient ring $R$ and $\alpha\in \{0, \ldots, k\}$ we have  $\Rdim^R(\phi_\alpha) = 0$; we let $D(\Sigma)$ denote the depth of the cover. We call it an $R$-sectorial LS-covering if $\Rdim^R(\phi_\alpha) = 0$ for fixed $R$.
We define the \textit{sectorial LS category} of $X$ to be
\[
\LS_{sect}(X) :=  \min_{\text{sectorial-LS covering } \Sigma }
D(\Sigma).
\]
Then $R$-sectorial LS category is defined analogously.
\end{definition}
The depth of such a cover is at most $k$.  
Note that there are no conditions about the intersection of elements in the open cover. In particular, if $X$ admits an arboreal cover (of Dynkin type) using $k+1$ pieces, then $\Rdim(W(X)) \le k$; again, there is no assumption that there intersection of the elements in the cover are themselves arboreal.

While this is the definition we use for this paper, we pose the following open-ended question:
\begin{question}
    Is there a geometric condition on a sectorial inclusion $\phi_i: X_i\hookrightarrow X$ which identifies that $\Rdim^R(\phi_i)=0$ for all rings $R$? One easy condition is that $\phi_i$ factors through the inclusion of $\tilde X_i$ with $\mathcal W(\tilde X_i)=\Perf(R)$ for any ring $R$; for example, if the $\tilde X_i= T^*D^n$. However, this does not handle all of the cases that we would like. 
\end{question}

\begin{lemma} \label{eq:lssect}
   Let $X$ be a Weinstein manifold. For any commutative ring $R$, 
   \[ \Rdim^R(\mathcal W(X)) \le \LS_{sect}^R(X)\le \LS_{sect}(X). \]
\end{lemma}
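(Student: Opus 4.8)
The statement is essentially a geometric special case of \cref{prop:TLSCatToRdim} together with the sectorial descent theorem, so the plan is to package those two inputs. First I would unwind the definition of a sectorial LS-covering $\Sigma$ of $X$: it is a sectorial covering $\phi_0,\dots,\phi_k\colon X_0,\dots,X_k\hookrightarrow X$ of depth $D(\Sigma)$ in which each induced functor $\phi_\alpha\colon \mathcal W(X_\alpha)\to \mathcal W(X)$ has $\Rdim^R(\phi_\alpha)=0$. By \cite[Theorem 1.35]{ganatra2018sectorial} (recalled around \cref{eq:hocolim}), the induced diagram $\mathcal F\colon \Sigma\to\Cat$ sending $I\mapsto \mathcal W(X_I)$ (with $X_I=\bigcap_{i\in I}X_i$) comes with a functor $j\colon \Grot(\mathcal F\colon \Sigma\to\Cat)\to\mathcal W(X)$ whose image resolves every object of $\mathcal W(X)$; in particular $j$ is essentially surjective after passing to $\Perf$, so $\mathcal F$ presents $\mathcal W(X)$ as a homotopy colimit in the sense of \cref{subsec:triangLS}.

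Second, I would observe that this presentation is an $R$-Lusternik--Schnirelmann covering of $\mathcal W(X)$ in the sense of \cref{def:triangulatedLS}: for each $\sigma\in\Sigma$ the functor $j_\sigma\colon \mathcal F(\sigma)=\mathcal W(X_\sigma)\to\mathcal W(X)$ factors through the inclusion functor $\phi_{i_0}$ for any $i_0\in\sigma$ (since $X_\sigma\subset X_{i_0}$ as a sectorial inclusion and the induced functors compose), and hence its essential image is contained in that of $\phi_{i_0}$, which has $\Rdim^R=0$; therefore $\Rdim^R(j_\sigma)=0$. (Here I would note that $\langle \essIm(j_\sigma)\rangle^R\subset\langle\essIm(\phi_{i_0})\rangle^R_1$ immediately gives $\gentime^R_{G}(\essIm(j_\sigma))=0$ for the generator $G$ realizing $\Rdim^R(\phi_{i_0})$.) Now \cref{prop:TLSCatToRdim} applies and yields $\Rdim^R(\mathcal W(X))\le D(\Sigma)$. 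Minimizing over all sectorial-LS coverings $\Sigma$ gives $\Rdim^R(\mathcal W(X))\le\LS_{sect}^R(X)$, and the second inequality $\LS_{sect}^R(X)\le\LS_{sect}(X)$ is immediate since any sectorial-LS covering (where $\Rdim^R(\phi_\alpha)=0$ for \emph{all} $R$) is in particular an $R$-sectorial-LS covering for the fixed $R$.

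\textbf{Main obstacle.} The routine parts are the definition-chasing; the one point that needs genuine care is the factorization claim $j_\sigma = \phi_{i_0}\circ(\text{functor }\mathcal W(X_\sigma)\to\mathcal W(X_{i_0}))$ up to the coherences built into the Grothendieck construction, and the fact that sectorial inclusions compose to sectorial inclusions inducing composable functors on wrapped categories (this is implicit in \cite{ganatra2017covariantly, ganatra2018sectorial}, but worth citing precisely). A secondary subtlety is that \cref{prop:TLSCatToRdim} is stated for a presentation via $\Grot(\mathcal F\colon\Sigma\to\Cat)$ with $j$ a quasi-equivalence on $\Perf$, whereas descent only guarantees that $j$'s image resolves every object; I would handle this by noting that the fully faithful functor of \cref{eq:hocolim} becomes a quasi-equivalence after taking $\Perf$ (idempotent pre-triangulated closure), which is exactly the hypothesis needed, and that the proof of \cref{prop:TLSCatToRdim} only uses $\Rdim^R$ of the essential images of the $j_\sigma$ inside $\mathcal W(X)$, so it goes through verbatim. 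No new estimates are required beyond what \cref{prop:TLSCatToRdim} already provides.
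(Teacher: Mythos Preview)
Your proposal is correct and follows essentially the same route as the paper: use sectorial descent to present $\mathcal W(X)$ as a homotopy colimit over the intersection poset, observe that each $j_I\colon \mathcal W(X_I)\to\mathcal W(X)$ factors through some $\phi_i$ with $i\in I$ so $\Rdim^R(j_I)=0$, and then invoke \cref{prop:TLSCatToRdim}. The paper's proof is terser and does not explicitly flag the two subtleties you raise (composability of the induced functors and the passage from ``image resolves every object'' to a quasi-equivalence on $\Perf$), but these are exactly the right points to check and your handling of them is sound.
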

\begin{proof}
    By \cite{ganatra2018sectorial}, we have $\mathcal W(X)=\hocolim_\Sigma \mathcal W(X_I)$, where $X_I=\bigcap_{i\in I} X_i$. 
    Observe that $\phi_I: X_I\to X$ factors through $\phi_i$ for $i \in I$, and thus $\Rdim^R(\phi_I)=0$ for all $I$. Therefore, the localization $\Grot(\mathcal W(X_I))\to \mathcal W(X)$ is an example of $R$-Lusternik-Schnirelmann covering of $\mathcal W(X)$, from which the bound follows by \cref{prop:TLSCatToRdim}.
\end{proof} 
\section{Applications: Bounds on categorical dimensions from geometry and topology}
\label{sec:applications}
In this section, we apply the tools from \cref{sec:symplecticTools} to provide upper bounds for the diagonal dimension, Rouquier dimension, and triangulated LS category of $\mathcal W(X)$, handling the special case of $X=T^*M$ separately. We summarize the bounds with a comparison to previous approaches here. 
\begin{itemize}
    \item \cite{bai2021rouquier} shows that for any Weinstein domain $X$ with skeleton $\LL_X$ and Hamiltonian diffeomorphism $\phi$, the number of non-degenerate intersection points $|\LL_X \cap \phi(\LL_X)|$ gives an upper bound on generation time of the diagonal bimodule by product Lagrangians. 
    Here, we use a Lagrangian cobordism argument to improve this result to a bound on the diagonal dimension by the number of distinct \textit{action values} of these intersection points, as well as certain degenerate critical points. We first focus on the case of cotangent bundles in \cref{subsec:ddimLeqCritM}, where this can be re-expressed in terms of the minimal number of critical values of a Morse function $f: M\to \RR$. In \cref{subsec:ddimLeqActM}, we handle the general case. Note that our bounds hold for any choice of coefficient ring $R$ and choice of grading/orientation data. 
    \item \cite{bai2021rouquier} also shows that given a sectorial decomposition of $X=\bigcup_i X_i$ with arboreal intersections,
    (more generally, a covering into pieces with $\Rdim(\mathcal W(X_I))=0$) the Rouquier dimension is upper bounded by the depth of the cover. We show that one can weaken this condition and consider covers whose essential images have Rouquier dimension zero to get an improved bound by the triangulated LS category. In \cref{subsec:lsUpperBound}, we apply this to $T^*M$ and show that the bound is at most $\LS(M)$. Additionally, we apply this method to bound the Rouquier dimension of a Lefschetz fibration by the number of critical values in \cref{subsec:sectorialLSLefschetz}. A bound by the number of critical points was obtained in \cite{bai2021rouquier}.
\end{itemize}
Finally, we provide an argument showing that in the case of $T^*M$ with its canonical grading/orientation data, the cuplength of $M$ provides a lower bound for the generation time by a cotangent fiber, which was shown independently in \cite{favero2023rouquier}.
\begin{rem}
    Note that \cite[Example 1.1]{nadler2016wrapped} shows that an alternative symplectic model for $\mathcal{C}(M)$ is the category of wrapped microlocal sheaves. The main result of \cite{ganatra2018microlocal} more directly relates wrapped microlocal sheaves and $\mathcal{W}(T^*M)$. In the category of wrapped microlocal sheaves, a cotangent fiber corresponds to a co-representative of the stalk functor. In fact, \cite{nadler2009microlocal} gives a resolution of the diagonal in the (unwrapped) microlocal setting from which it seems plausible to deduce \eqref{eq:bccotangent}. 
\end{rem}

\subsection{\texorpdfstring{$\Ddim(T^*M)$}{Diagonal dimension of cotangent bundles}: bounds by critical values}
\label{subsec:ddimLeqCritM}
\begin{corollary}\label{cor:non_degenerate_critical_values}
Let $M$ be a smooth manifold, and $N\subset M$ be an embedded submanifold. Let $f: N \rightarrow \mathbb{R}$ be a Morse function, i.e., a smooth function with non-degenerate critical points. Then for any commutative ring $R$, 
\begin{equation}
\gentime_{T^*_xX}N^*N \le |\critval(f)|-1
\end{equation}
\end{corollary}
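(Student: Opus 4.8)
The plan is to realize $N^*N \subset T^*M$ via the preceding geometric machinery and then invoke \cref{cor:Weinsteinresolve}, or rather the more basic \cref{prop:lagrangianCobordism}, directly. First I would set up the local picture: a Morse function $f\colon N \to \RR$ on the embedded submanifold $N \subset M$ defines a Lagrangian $L := \Gamma_{df} = N^*N + df$ inside $T^*M$ which is a small Hamiltonian perturbation of the conormal $N^*N$ (extend $f$ to a tubular neighborhood of $N$ in $M$ as a function that is fiberwise quadratic-nondegenerate in the normal directions, or simply use the graph of $df$ over $N$ sitting inside $T^*N \hookrightarrow T^*M$ via the conormal). Crucially, $L$ is Hamiltonian isotopic to $N^*N$ in $\mathcal{W}(T^*M)$, so $\gentime_{T^*_x M}(N^*N) = \gentime_{T^*_x M}(L)$, and $L$ intersects the skeleton $M \subset T^*M$ precisely at the critical points of $f$, with $L \cap M$ transverse since $f$ is Morse.

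Next I would apply \cref{prop:lagrangianCobordism} to $L$. Take the subdomain $Y \subset T^*M$ to be a disk cotangent bundle (completing to $T^*M$). The primitive of $\lambda|_L$ is, up to the constant from extending over $M$, the function $f$ itself (pulled back along the projection $L \to N$). Grouping the critical points of $f$ by their critical value partitions $L \cap Y$ into pieces $L_1, \ldots, L_k$ — where $k = |\critval(f)|$ and each $L_i$ is the (possibly disconnected) union of the local stable-manifold pieces of $L$ over the critical points sharing value $c_i$ — and by construction $f(\partial L_i) = c_i$ with $c_1 < \cdots < c_k$. This is exactly the hypothesis of \cref{prop:lagrangianCobordism}: here the remark following that proposition is what we need, namely that the $L_i$ are allowed to be disconnected as long as the primitive is constant on each. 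The proposition then yields an exact Lagrangian cobordism $K\colon (L_1,\ldots,L_k)\rightsquigarrow L$ and hence an iterated mapping cone $L \simeq [L_k \to \cdots \to L_1]$ in $\Perf \mathcal{W}(T^*M)$.

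Finally I would identify each $L_i$ with an object built from the cotangent fiber $T^*_x M$. Each $L_i$, restricted to a small Weinstein neighborhood of the relevant critical points, is (after a further Hamiltonian perturbation as in the proposition preceding \cref{cor:Weinsteinresolve}, and the accompanying figure) a disjoint union of cocores $T^*_{x_j} M$ over the critical points $x_j$ with $f(x_j) = c_i$; a disjoint union of Lagrangians is the direct sum in the wrapped category, and all cotangent fibers of $T^*M$ are quasi-isomorphic in $\mathcal{W}(T^*M)$ (they are all generators with the same endomorphism algebra $C_{-*}(\Omega M)$, being Hamiltonian isotopic through the zero section). Hence each $L_i \in \langle T^*_x M \rangle$, and the length-$k$ iterated cone gives $N^*N = L \in \langle T^*_x M\rangle_{k}$, i.e. $\gentime_{T^*_x M}(N^*N) \le k - 1 = |\critval(f)| - 1$. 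Since everything — \cref{prop:lagrangianCobordism}, \cref{thm:birancornea}, and the cocore replacement — is stated over an arbitrary commutative ring $R$, and the cobordism argument does not see the grading/orientation data, the bound holds for any $R$.

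The main obstacle is the transition from the abstract cobordism conclusion to a statement about the cotangent fiber specifically: one must check that grouping critical points by value produces pieces $L_i$ each of which, after the local perturbation, genuinely becomes a disjoint union of honest cocores with the \emph{same} constant primitive value, and that the resulting cocores are all isomorphic to $T^*_x M$ in $\mathcal{W}(T^*M)$ independent of the point and of the Morse index — this last point is standard (fibers are Hamiltonian isotopic) but should be invoked carefully, as should the claim that the small Hamiltonian isotopy from $N^*N$ to $\Gamma_{df}$ does not change the generation time. A secondary subtlety is the bookkeeping of primitives: ensuring $f(\partial L_i)$ is truly independent of the component requires choosing the extension of $f$ off $N$ and the neighborhood $Y$ compatibly, exactly as arranged in the proof of \cref{prop:lagrangianCobordism}.
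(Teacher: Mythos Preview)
Your proposal is correct and follows essentially the same route as the paper: extend $f$ off $N$ quadratically in the normal directions, push $N^*N$ by the resulting Hamiltonian so that it meets the zero section $M=\LL_{T^*M}$ transversely exactly at $\Crit(f)$ with action equal to $f$, and then invoke \cref{cor:Weinsteinresolve}. The paper's proof simply cites \cref{cor:Weinsteinresolve} directly, whereas you unpack that corollary into its constituents (\cref{prop:lagrangianCobordism} plus the cocore-replacement proposition), but the content is the same.
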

\begin{proof}
    Let $\tilde f: M\to \RR$ be a smooth function such that $\tilde f|_N=f$ and $\tilde f$ grows quadratically in the normal direction to $N$. Then, let $L$ be the image of $N^*N$ under the time-1 Hamiltonian flow on $T^*M$ given by $\tilde f$. Observe that $L$ intersects $M$ transversely at the critical points of $f$ and that the action of each $q\in L\cap M$ is given by $f(q)$.
    By \cref{cor:Weinsteinresolve}, $L$ is equivalent to a twisted complex of cocores indexed by the critical values of $f$. 
\end{proof}
\begin{corollary} \label{cor:Morsebound}
    Let $f: M\to \RR$ be a Morse function. Consider the diagonal Lagrangian  submanifold $\Delta\subset (T^*M)^-\times T^*M$. Let $\mathcal G$ be the full subcategory generated on the single object $T^*_qM\times T^*_qM$. Then 
    \[\Res_\mathcal G  \Delta  \leq |\critval(f)|-1\]
As a result, we have the following bound on $\Rdim$ of $\mathcal W(T^*M; R)$ valid over a commutative ring $R$
\[
\Rdim(\mathcal W(T^*M; R)) \le
\min_{f:M\to \RR \text{ Morse}} |\critval(f)|-1+ \gentime_{ R}(\Perf R).
\]
    \label{cor:genTimeCocore}    
\end{corollary}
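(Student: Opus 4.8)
The plan is to deduce Corollary~\ref{cor:Morsebound} from Corollary~\ref{cor:non_degenerate_critical_values} applied to the diagonal submanifold $\Delta_M \subset M \times M$, together with the identification of the diagonal bimodule from Section~\ref{eq:cotangentCorrespondences} and the generation-from-diagonal bound of Proposition~\ref{prop:weakestDdimbound}. First I would set $N = \Delta_M \subset M \times M$, which is an embedded submanifold, and choose the Morse function $g : \Delta_M \to \RR$ to be $f$ transported along the diffeomorphism $\Delta_M \cong M$; then $\critval(g) = \critval(f)$ and $|\critval(g)| = |\critval(f)|$. Applying Corollary~\ref{cor:non_degenerate_critical_values} with the ambient manifold $M \times M$ gives
\[
\gentime_{T^*_{(x,x)}(M\times M)} N^*\Delta_M \le |\critval(f)| - 1
\]
for any commutative ring $R$, where $T^*_{(x,x)}(M \times M) = T^*_x M \times T^*_x M$.

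Next I would translate this to the diagonal bimodule. Under the symplectic involution ${}^-(-) : (T^*M)^- \times T^*M \to T^*(M\times M)$ recalled in Section~\ref{eq:cotangentCorrespondences}, the diagonal Lagrangian $\Delta$ is carried to $N^*\Delta_M$, and the product cocore $T^*_q M \times T^*_q M$ is carried to a product of cotangent fibers $T^*_{(q,q)}(M\times M)$ (the involution only flips a fiber coordinate, which does not change the Lagrangian up to the relevant grading data, and grading/orientation data plays no role here as noted in the excerpt). Hence the generation-time bound above becomes exactly $\gentime_{\mathcal G} \Delta \le |\critval(f)| - 1$ where $\mathcal G$ is generated by $T^*_qM \times T^*_qM$; in fact Corollary~\ref{cor:Weinsteinresolve}, which underlies Corollary~\ref{cor:non_degenerate_critical_values}, produces an honest resolution, so $\Res_{\mathcal G}\Delta \le |\critval(f)| - 1$, which is the first displayed claim.

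For the second displayed inequality, I would combine this with Proposition~\ref{prop:weakestDdimbound}. By definition $\Ddim(\mathcal W(T^*M)) \le \gentime_{\mathcal G}\Delta \le |\critval(f)| - 1$ for the chosen Morse function, so minimizing over Morse functions $f$ gives $\Ddim(\mathcal W(T^*M; R)) \le \min_f |\critval(f)| - 1$. Then Proposition~\ref{prop:weakestDdimbound} yields
\[
\Rdim(\mathcal W(T^*M; R)) \le \Ddim(\mathcal W(T^*M; R)) + \gentime_R(\Perf R) \le \min_{f : M \to \RR \text{ Morse}} |\critval(f)| - 1 + \gentime_R(\Perf R),
\]
as desired. (When $R$ is a field the error term vanishes and one recovers $\Rdim \le \min_f |\critval(f)| - 1$, and one can alternatively invoke Proposition~\ref{prop:diagGenerationToGeneration} directly.)

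The only genuine subtlety — and the step I would be most careful about — is the passage from Corollary~\ref{cor:non_degenerate_critical_values} for a general embedded submanifold $N \subset M\times M$ to the specific case $N = \Delta_M$, checking that the Hamiltonian perturbation of $N^*\Delta_M$ built from a quadratically-growing extension $\tilde g$ of $g$ meets the zero section transversally exactly at $\Crit(g)$ with action value $g(q)$, so that Corollary~\ref{cor:Weinsteinresolve} applies with action values precisely $\critval(f)$; everything else is bookkeeping across the cotangent correspondence and a direct appeal to Proposition~\ref{prop:weakestDdimbound}. No new geometric input beyond what is already established in Section~\ref{sec:symplecticTools} is required.
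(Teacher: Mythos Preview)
Your proposal is correct and follows essentially the same route as the paper: apply Corollary~\ref{cor:non_degenerate_critical_values} to $N=\Delta_M\subset M\times M$, transport the conclusion across the symplectic involution of Section~\ref{eq:cotangentCorrespondences}, and then deduce the Rouquier bound from the resolution of the diagonal. The only difference is that the paper cites Proposition~\ref{prop:diagGenerationToGeneration} for the Rouquier statement, whereas you invoke Proposition~\ref{prop:weakestDdimbound}; your choice is the appropriate one for an arbitrary commutative ring $R$, since Proposition~\ref{prop:diagGenerationToGeneration} is stated only over a field and does not by itself produce the $\gentime_R(\Perf R)$ error term.
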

\begin{proof}
    By \cref{eq:cotangentCorrespondences}, the conormal to the diagonal in  $T^*M\times T^*M= T^*(M\times M)$ is identified with $\Delta \subset (T^*M)^- \times T^*M$ by a symplectic involution. The statement on Rouquier dimension follows from \cref{prop:diagGenerationToGeneration}.
\end{proof}

 \cref{subsubsec:degenerate} contains a generalization of \cref{cor:Morsebound} to functions with a certain class of degenerate critical points.

\begin{remark}
    In \cite{abouzaid2012wrapped}, Abouzaid gives a prescription for presenting a compact exact Lagrangian submanifold $L\subset T^*M$ as a twisted complex of cotangent fibers at points in $M\cap L$, which agrees with ours when $N=M$. Our result extends this to non-compact exact Lagrangian submanifolds and observes that the differential is filtered by the action.
\end{remark}

\begin{remark}
The generation time of the zero-section and the diagonal are closely related, at least in the case when $M$ has trivial tangent bundle. 
In this case, $\Delta: M \rightarrow M \times M$ has a thickening to a codimension zero inclusion $\Delta': M \times D^n \rightarrow M \times M$
and an induced proper inclusion 
\[
T^*M \times T^*D^n \hookrightarrow T^*(M\times M) \cong T^*M \times T^*M.
\]
Furthermore, the image of the stabilized zero-section 
$M \times T^*_0 D^n \subset T^*M\times T^*D^n$ under this proper inclusion is the Lagrangian conormal $N_{\Delta_{M}} \subset T^*(M\times M)$, which corresponds to the diagonal $\Delta_{T^*M} \subset (T^*M)^{-} \times T^*M$ under the symplectomorphism between $T^*M$ and $T^*M^{-}$. Furthermore, this proper inclusion maps cotangent fibers $T^*_q M \subset T^*M$ map to product cotangent fibers $T^*_q M \times T^*_q M$. Hence if $M \subset T^*M$ is generated in time $k$ via cotangent fibers, then so is the diagonal $\Delta_{T^*M} \subset (T^*M)^{-}\times T^*M$. Conversely, if we work with field coefficients and the diagonal is generated in time $k$ by cotangent fibers, then by applying convolution, so is the zero-section. 
\end{remark}

\begin{remark}
A useful set of examples are partially wrapped Fukaya categories whose stop is determined by a stratification of the base.
Let $\Sigma$ denote the data of a stratification $\{M_\alpha\subset M\}_{\alpha\in \mathcal S}$, along with the data of a conical sub-bundles $\{\sigma_\alpha\subset N^*M_\alpha\}_{\alpha\in \mathcal S}$. This defines $\LL_{\Sigma}$, a stratified isotropic subset of $T^*L$.
Suppose that we have  $L\subset X$ a strictly exact Lagrangian submanifold with a choice of Weinstein neighborhood so that $\LL_X|B^*_\epsilon L = \LL_\Sigma$ for some $\Sigma$.
Consider now a Morse function with the property that the graph of  $df$ meets $\LL_\Sigma$ transversely. The action values are defined to be the values of $f$ where $df\in \LL_\Sigma$.  
Then the generation time of $L$ by cocores is at most the number of action values. This was the strategy employed in \cite{hanlon2023resolutions}
\end{remark}

\subsection{\texorpdfstring{$\Ddim(\mathcal W(X))$}{Diagonal dimension of wrapped Fukaya categories}: bounds by action values and Reeb chords}

\label{subsec:ddimLeqActM}

Next, we generalize the previous results to general Weinstein domains that are not cotangent bundles. 
We first note that in the case of cotangent bundles, a smooth function $f: M\rightarrow \mathbb{R}$ gives a Hamiltonian function $H_f := f\circ \pi: T^*M\rightarrow M \rightarrow \mathbb{R}$ and the time-1 flow $\phi: T^*M\rightarrow T^*M$ has the property that $M \cap \phi(M) = M \cap \Gamma(df)$ coincide with the critical points of $f$. More generally, suppose $\lambda_X$ is a Liouville form for a Weinstein domain $X$ 
and consider a Hamiltonian isotopy $\phi_t: X \rightarrow X$ (with $\phi:= \phi_1$) induced by a possibly time-dependent Hamiltonian $H_t: X \rightarrow \RR$; let $v_t$ be the Hamiltonian vector field associated to $H_t$. 
Then the \textit{symplectic action} of the Hamiltonian trajectory $\phi_t(x), 0\le t \le 1,$ is defined by integrating $\lambda(v_t) - H_t$ over  $\phi_t(x)$. We can think of the symplectic action as a function $A_{H_t, \lambda}: X \rightarrow \RR$ defined by 
\begin{equation} \label{eq:sympact}
A_{H_t, \lambda}(x) = \int_0^1 (\lambda(v_t) - H_t)\circ \phi_t(x) dt .
\end{equation}
We first have the following result for arbitrary Weinstein domains, which improves \cite[Proposition 1.14]{bai2021rouquier} by replacing intersection points with distinct action values of intersection points with the skeleton. Here, we again make no assumption on grading/orientation data.
\begin{prop} \label{prop:ddimgeneral}
Let  $X$ be a Weinstein domain with Liouville form $\lambda$ and skeleton $\LL_X$. Let $\phi: X\to X$ be the time-$1$ flow of a Hamiltonian $H_t: X \to \RR$ such that $\LL_X\cap \phi(\LL_X)$ is a transverse intersection of the Lagrangian strata of the skeleton. Then, over any commutative ring $R$,
\begin{equation*}
\Res_{\mathcal G}\Delta_{X}  \le 
|A_{H_t, \lambda}(\LL_X\cap \phi(\LL_X) )| -1
\end{equation*}
where $\mathcal G$ is the category of products of cocores.
\end{prop}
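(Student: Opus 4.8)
The plan is to reduce the statement to the cotangent-bundle case treated in \cref{cor:Morsebound} by using the K\"unneth identification of the diagonal with a geometric Lagrangian together with the resolution result \cref{cor:Weinsteinresolve}. First, recall from \cref{eq:cotangentCorrespondences} that under the K\"unneth embedding $\mathcal{W}(X^-)\tensor \mathcal{W}(X)\to \mathcal{W}(X^-\times X)$, the diagonal bimodule $\Delta_X$ is represented by the diagonal Lagrangian $\Gamma_\Delta\subset X^-\times X$, which is a Lagrangian submanifold. The key geometric observation is that $X^-\times X$ is again a Weinstein domain, and its skeleton is $\LL_{X^-\times X}=\LL_{X^-}\times \LL_X$; moreover, under the natural identifications, $\Gamma_\Delta$ meets this skeleton precisely along the ``anti-diagonal'' copy of $\LL_X$, with intersection points parametrized by $\LL_X\cap\phi(\LL_X)$ once we push off $\Gamma_\Delta$ by the Hamiltonian $\tilde H := H\circ\mathrm{pr}_2$ (or a symmetrized variant) on the product.

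Next I would check that this perturbed diagonal $L' := \psi_{\tilde H}(\Gamma_\Delta)$ meets the skeleton $\LL_{X^-}\times\LL_X$ transversely along its Lagrangian strata: this follows from the hypothesis that $\LL_X\cap\phi(\LL_X)$ is a transverse intersection contained in the smooth (Lagrangian) strata, since transversality and the smooth-stratum condition are preserved under taking the product with the (Lagrangian, hence smooth-strata) second factor and the graph construction. One then computes the action values of $L'$ relative to a primitive: the primitive of the Liouville form restricted to the (Hamiltonian-perturbed) diagonal, evaluated at an intersection point corresponding to $x\in\LL_X\cap\phi(\LL_X)$, is (up to an overall constant independent of the point) exactly $H(x)$. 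Hence $|\actval(L')| = |H(\LL_X\cap\phi(\LL_X))|$. Applying \cref{cor:Weinsteinresolve} to the Weinstein domain $X^-\times X$ and the Lagrangian $L'$ with the category $\mathcal{G}$ of cocores of $X^-\times X$ — which by \cref{eq:kunneth} and \cite[Corollary 1.18]{ganatra2018sectorial} may be taken to be products of cocores of the factors — yields $\Res_{\mathcal G}\Delta_X = \Res_{\mathcal G}L'\le |\actval(L')|-1 = |H(\LL_X\cap\phi(\LL_X))|-1$, as claimed. The statement is insensitive to the choice of coefficient ring $R$ and grading/orientation data since \cref{prop:lagrangianCobordism} and \cref{cor:Weinsteinresolve} are, so the bound holds over any commutative $R$.

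The main obstacle I anticipate is the bookkeeping around the primitive and the action values on the product: one must verify that the Hamiltonian perturbation $\psi_{\tilde H}$ of the diagonal in $X^-\times X$ can be arranged so that (a) the resulting Lagrangian is still conical and exact with a well-defined primitive, (b) the intersection with the skeleton lands in the Lagrangian strata and is transverse, and (c) the value of the primitive at each intersection point recovers $H(x)$ and not some more complicated combination. Point (c) requires care about the sign conventions on $X^-$ versus $X$ and about the relationship between the Hamiltonian isotopy of $\Gamma_\Delta$ inside the product and the Hamiltonian isotopy $\phi$ of $X$ — this is exactly the Lagrangian suspension/action computation underlying \cref{prop:lagrangianCobordism}, and getting the identification $f(\LL_{X^-\times X}\cap L') = H(\LL_X\cap\phi(\LL_X))$ precise is the technical heart. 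A secondary, more routine point is confirming that \cref{cor:Weinsteinresolve} applies with $\mathcal{G}$ taken to be \emph{product} cocores rather than all cocores of $X^-\times X$; this follows because every generalized cocore of a product Weinstein domain can be taken to be a product of generalized cocores of the factors, so the cobordism produced by \cref{prop:lagrangianCobordism} resolves $L'$ by product cocores.
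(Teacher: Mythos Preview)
Your approach is essentially the paper's: your perturbed diagonal $L' = \psi_{\tilde H}(\Gamma_\Delta)$ is precisely the graph $\Gamma_\phi \subset X^{-} \times X$, and the paper proceeds exactly as you outline by observing that $\Gamma_\phi$ meets the product skeleton $\LL_X^{-} \times \LL_X$ transversely in its Lagrangian strata at points indexed by $\LL_X \cap \phi(\LL_X)$, applying \cref{cor:Weinsteinresolve}, and noting that the relevant cocores of the product are products of cocores. The one simplification over your write-up is that the paper dispatches your anticipated obstacle~(c) in a single line by remarking that $H|_{\Gamma_\phi}$ is itself a primitive for the restricted Liouville form, so the action values are exactly $H(\LL_X \cap \phi(\LL_X))$.
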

\begin{rem}
In the case when $X$ is $T^*M$ equipped with the canonical Liouville form $\lambda_{can}$ and $H_t$ is time-independent $H$ that depends only on the position coordinate, i.e., $H: T^*M \rightarrow M \rightarrow \RR$, then $\phi_t$ flows in the momentum direction, $\lambda_{can}(v_t) = 0$, and $A_{H, \lambda_{can}}(x) = -H(x)$. 
\end{rem}
\begin{proof}
We will use Corollary \ref{cor:Weinsteinresolve} to produce a resolution of the diagonal Lagrangian $\Delta_X \subset X^- \times X$. First, we note that the Hamiltonian isotopy $\phi_t$ induces a Hamiltonian isotopy $Id\times \phi_t: X^-\times X \rightarrow X^-\times X$ that takes $\Delta_X$ to the graph $\Gamma_\phi$ of $\phi= \phi_1$; in particular, $\Delta_X$ and $\Gamma_\phi$ are equivalent in the wrapped Fukaya category of $X^-\times X$ and it suffices to produce a resolution of $\Gamma_\phi$. 
The non-degeneracy condition implies that $\Gamma_\phi \subset X^-\times X$ intersects $\LL_X^- \times \LL_X$ in isolated points $(p, \phi(p))$, where $p \in \LL_X$ and $\phi(p) \in \LL_X$. Note that $\LL_X^-\times \LL_X$ is the skeleton of $X^-\times X$ associated to the Liouville form  $-\pi_1^*\lambda + \pi^*_2 \lambda$, and the points $(p, \phi(p))$ belong to the Lagrangian stratum of the skeleton.  Furthermore, the cocores to $(p, \phi(p))$ in $X^- \times X$ correspond to the product of the co-core (in $X$) associated to $p$ and the co-core associated to $\phi(p)$. Hence $\Gamma_\phi$ has a resolution into product cocores. 
To bound the generation time of $\Gamma_\phi$, we note that by \cite[Proposition 9.19]{mcduff1997introduciton}, the restriction of the Liouville form $-\pi_1^*\lambda + \pi^*_2 \lambda$ to the graph Lagrangian $\Gamma_\phi$ has primitive given by the symplectic action function, i.e., if we consider the parametrization $Id \times \phi: X \rightarrow X^-\times X$ of $\Gamma_\phi$, then 
$$
(Id \times \phi)^*(-\pi_1^*\lambda + \pi^*_2 \lambda) = d(A_{H_t, \lambda})
$$
where $A_{H_t, \lambda}: X \rightarrow \RR$ is given by \eqref{eq:sympact}. Hence, the action, in the sense of \cref{def: lagrangian primitive action} and \cref{cor:Weinsteinresolve}, of the intersection point $(x, \phi(x))$ of $\Gamma_\phi$ and the skeleton $\LL_X^- \times \LL_X$  is precisely $A_{H_t, \lambda}(x)$. Using \cref{cor:Weinsteinresolve}, we obtain the claimed result. 
\end{proof}

Unlike in the case of cotangent bundles, in general if $f: X \rightarrow \mathbb{R}$ is a Weinstein Morse function, then the number of critical points or critical values of $f$ \textit{does not} give an upper bound on $\Rdim(\mathcal W(X))$, as observed by \citeauthor{bai2021rouquier} \cite{bai2021rouquier}. For example, by work of the third author \cite{lazarev2020simplifying}, any high-dimensional Weinstein domain $X^{2n}, n \ge 3$, has a Weinstein Morse function with the number of critical points at most the number of critical points of a smooth Morse function, possibly plus one more. On the other hand, there are exotic cotangent bundle of spheres containing $T^*T^n_{std}$; hence their Rouquier dimension is at least $n$ while they all admit Weinstein Morse functions with two critical points. The issue is that a Weinstein Morse function does not give a perturbation of the skeleton with self-intersection corresponding to the critical points of the Morse function. As we show below, there are more intersection points corresponding to the Reeb chords of the attaching spheres. However, if one counts critical points and also accounts for the Reeb chords of the attaching Legendrian spheres, then there is an upper bound on the Rouquier dimension, as we now explain.

Recall that any Weinstein domain $X^{2n}$ has a self-indexing Weinstein Morse function. In particular, $X$ has a Weinstein presentation as $X_{sub} \cup_{\Lambda} H^n$, where $X_{sub}$ is the \textit{subcritical} portion of $X$ consisting of all Weinstein handles of index \textit{less than} $n$ and $\cup_\Lambda H^n$ denotes the \textit{critical} Weinstein handles of index equal to $n$ attached along $\Lambda$, a link of Legendrian spheres contained in the contact boundary $\partial_\infty X_{sub}$ of $X_{sub}$. 
By \cite{Cieliebak_split}, $X_{sub}^{2n}$ is Weinstein homotopic to $X_{0}^{2n-2} \times D^2$ for some Weinstein domain $X_0^{2n-2}$ of real dimension two less than that of $X$; here, $D^2$ is equipped with the standard Weinstein structure 
corresponding to an index $0$ Weinstein handle and has Liouville form $\frac{1}{2}(xdy- y dx)$, or $\frac{1}{2}r^2 d\theta$ in polar coordinates. We note that there is significant freedom in picking $X_0$. For example, if $X_0, X_0'$ are almost symplectomorphic Weinstein domains, then $X_0 \times D^2, X_0' \times D^2$ are Weinstein homotopic; furthermore, if we vary the number of critical Weinstein handles in the presentation of $X$ (say by doing handle creation or cancellation), the homotopy type of $X_0$ may  change drastically  as well. 

In general, the Legendrian link $\Lambda \subset \partial_\infty(X_0 \times D^2)$ may have infinitely many Reeb chords, even after generic Legendrian perturbation. Indeed, the contact form on $X_0 \times S^1$ given by the restriction of the Liouville form $\lambda_{X_0} + \frac{1}{2}r^2 d\theta$ is $\lambda_{X_0} + \frac{1}{2}d\theta$. Hence, the Reeb vector field is $2\partial_\theta$ and so the Reeb flow on $X_0 \times S^1 \subset \partial(X_0\times D^2)$ is rotation in the $S^1$-direction, which is periodic.  However, if we restrict the Reeb dynamics to the region $X_0\times (S^1\backslash \{-1\})$ of $\partial_\infty(X_0\times D^2)$, then we break this periodicity and $\Lambda$ will generically have finitely many Reeb chords, all of which are non-degenerate.
To achieve this, we consider the Weinstein hypersurface $X_0 \times \{-1\} \subset X_0 \times S^1 \subset \partial_\infty(X_0 \times D^2)$, whose complement is 
$X_0 \times (S^1\backslash \{-1\})$. 
We observe that $\Lambda \subset \partial_\infty(X_0 \times D^2)$  can be Legendrian isotoped in  $\partial_\infty(X_0 \times D^2)$ to a Legendrian $\Lambda' \subset \partial_\infty(X_0 \times D^2)$ that is disjoint from the Weinstein hypersurface $X_0 \times \{-1\} \subset \partial_\infty(X_0 \times D^2)$ and hence $\Lambda' \subset X_0 \times (S^1\backslash \{-1\})$. Indeed, the Weinstein hypersurface $X_0$ is a thickening of a singular Legendrian (namely, the skeleton $\LL_{X_0}$ of $X_0$) and hence $\Lambda$ can be perturbed to be disjoint from $X_0$ for dimension reasons; again, we note that $\Lambda' \subset  X_0 \times (S^1\backslash \{-1\})$ is not unique and depends on the choice of perturbation (which affects how $\Lambda'$ becomes linked with $X_0\times \{-1\}$). 
Finally, since the Reeb vector field on $X_0 \times (S^1\backslash \{-1\})$ is rotation in the $S^1$-direction,  after a generic Legendrian isotopy, we can assume that $\Lambda$ has finitely many non-degenerate Reeb chords.  Indeed, the Reeb chords correspond to  critical points of certain functions, namely the difference of height functions defining different branches of $\Lambda$, and by perturbing one of these height functions, the critical points of the difference function are non-degenerate. 
In conclusion, any Weinstein domain $X$ has a (highly non-unique) presentation as $X_0 \times D^2 \cup_{\Lambda} H^n$, where $\Lambda \subset X_0 \times (S^1\backslash \{-1\})$ has finitely many non-degenerate Reeb chords. In the following result, we show that there is a Hamiltonian isotopy of $X$ whose self-intersection points correspond either to the Reeb chords of $\Lambda$ or to the critical Weinstein handles $\cup_\Lambda H^n_\Lambda$ (and the latter all have the same action). 

For the statement of our result, we need to recall that the action $A(\gamma)$ of a Reeb chord $\gamma$ in a contact manifold $(Y, \xi)$ with contact form $\alpha$ is given by 
$$
A(\gamma):= \int_\gamma \alpha.
$$
Hence in the contact manifold $X_0\times (S^1\backslash \{-1\})$ with contact form $\lambda_{X_0} + \frac{1}{2}d\theta$ and Reeb vector field $2\partial_\theta$, 
the action $A(\gamma)$ of a Reeb chord $\gamma$ (which is constant in the $X_0$-direction) is given by half the difference of angles between the endpoint of $\gamma$ and the starting point of $\gamma$. 
We note that the Reeb vector field $\frac{1}{2}\partial_\theta$ on $S^1\backslash \{-1\}$ is not complete but this is not an issue for us since $S^1\backslash \{-1\}$ is an auxiliary contact manifold living inside $S^1$ where this vector field is complete; alternatively, we can replace  $S^1\backslash \{-1\}$ with the contactomorphic contact manifold 
$(\RR, dx)$ with complete Reeb vector field $\partial_x$. 

\begin{prop}\label{prop: attaching_sphere_reeb_chords}
Suppose that $X$ is a Weinstein manifold with a presentation of the form 
\begin{equation} \label{eq:Weinsteinpres}
    X_{0} \times D^2\cup_{\Lambda} H^n
\end{equation}
where $\Lambda  = \coprod_{j \in J} \Lambda_j\subset X_0 \times (S^1 \setminus \{-1 \}) \subset \partial_\infty (X_0 \times D^2)$ is the link of Legendrian attaching spheres of the critical Weinstein handles with  finitely many non-degenerate Reeb chords $\{ \gamma_i \}_{i \in I}$ in  $X_0 \times (S^1 \backslash \{-1\})$ (by the preceding discussion, any Weinstein domain admits such a presentation). 

Then, there exists a Hamiltonian diffeomorphism $\phi: X \rightarrow X$ generated by  a time-independent $H: X \to \RR$ so that there is a decomposition of intersection points $\LL_X \cap \phi(\LL_X))=\{x_{\gamma_i}\}_{i \in I} \coprod \{x_{\Lambda_j}\}_{j \in J}$
and there is an increasing function $f$ so that  $A_{H, \lambda}(x_{\gamma_i}) = f(A(\gamma_i))$ and $A_{H, \lambda}(x_{\Lambda_j})$ is an arbitrarily small constant that is independent of $j$ and less than $A_{H, \lambda}(x_{\gamma_i})$ for all $i \in I$. 
In particular, there is an equality 
$$
|A_{H, \lambda}(\LL_X \cap \phi(\LL_X))| = \left|\{A(\gamma_i)\}_{i \in I}\right|+1
$$
and hence
\[
\Ddim(\mathcal W(X; R)) \le \left|\{A(\gamma_i)\}_{i \in I}\right|.
\]
\end{prop}

\begin{proof}
We note that the final inequality on diagonal dimension immediately follows from \cref{prop:ddimgeneral} given that we can construct $H$ with the claimed properties. Therefore, we are left to provide the geometric construction of $H$.

We begin by describing the skeleton of $X_0\times D^2 \cup_\Lambda H^n$ in this  Weinstein presentation. The Liouville form on $X_0\times D^2$ is given by $\lambda_{X_0} + \frac{1}{2}r^2 d\theta$ with Liouville vector field $v_{X_0} + \frac{1}{2}r\partial_r$, where $v_{X_0}$ is the Liouville vector field on $X_0$ and $r$ is the radial coordinate on $D^2$. Hence, the skeleton of $X_0 \times D^2$ is given by $\LL_{X_0} \times \{0\}$, where $\LL_{X_0}$ is the skeleton of $X_0$ with respect to $v_{X_0}$ and $\{0\}$ is the origin of $D^2$, i.e., the skeleton of the radial Liouville vector field on $D^2$. 
Then, the relative skeleton $\LL_{(X_0 \times D^2, \Lambda)}$ of the stopped Liouville manifold $(X_0 \times D^2, \Lambda)$ is obtained from $\LL_{X_0} \times \{0\}$ by adding a cone over $\Lambda$ using the negative Liouville flow, i.e.,
$\LL_{(X_0 \times D^2, \Lambda)} = \LL_{X_0} \times \{0\} \cup_{t \le 0} \Phi_{t}(\Lambda)$ where $\Phi_{t}$ is the time $t$ Liouville flow.
In particular, the projection of this relative skeleton to $D^2$ consists of a union of radial lines that start at the origin and end on $\partial D^2$. In addition, note that the relative skeleton $\LL_{(X_0 \times D^2, \Lambda)}$ has boundary given by $\Lambda$ and the skeleton of the domain $X_0 \times D^2 \cup_\Lambda H^n$ is obtained by adding $n$-cells  to $\LL_{(X_0 \times D^2, \Lambda)}$ along $\Lambda$, i.e., 
$\LL_X = \LL_{(X_0 \times D^2, \Lambda)} \cup_{\Lambda} D^n$. 

Recall that by assumption, $\Lambda$ is disjoint from $X_0 \times \{-1\}$ and hence the projection of the relative skeleton $\LL_{(X_0 \times D^2, \Lambda)}$ to $D^2$ is disjoint from $\{-1\} \in \partial D^2$; that is, the projection consists of a set of rays between the origin and the boundary $\partial D^2$ and these rays never make angle $\pi$ with the positive $x$-axis. In fact, we can assume that these rays make arbitrarily small angle with the $x$-axis. Indeed, there is a family of contact embeddings 
\[ f_t: \left(X_0 \times (S^1\backslash \{-1\}), \lambda_{X_0} + \frac{1}{2}d\theta \right) \hookrightarrow \left(X_0 \times (S^1\backslash \{-1\}),  \lambda_{X_0} + \frac{1}{2}
d\theta \right)  \]
given by  $f_t(x, \theta) = (\Phi_{t,0}(x), e^t\theta)$, where $\Phi_{t,0}$ is the Liouville flow on $X_0$. Applying these embeddings to $\Lambda \subset X\times (S^1\backslash \{-1\})$ for large negative time $t \ll 0$, we get that $f_t(\Lambda) \subset X_0 \times (S^1\backslash \{-1\})$ has small angle. 
Hence, we can assume that the Legendrian attaching spheres $\Lambda\subset \partial(X_0\times D^2)$ project to points in $\partial S^1$ with small angle and hence the same holds for the relative skeleton $\LL_{(X_0 \times D^2, \Lambda)}$; to be concrete, we suppose that this angle is less than $\pi/4$ from the positive $x$-axis. Furthermore, since $\Lambda$ has only finitely many Reeb chords, we can assume that there are no Reeb chords with action less than some small positive constant $2\epsilon$. In particular, by the Weinstein neighborhood theorem, there is a tubular neighborhood of $\Lambda$ strictly contactomorphic to $(T^*_{\le \epsilon} \Lambda \times [-\epsilon,\epsilon]_s, \lambda_{std} + ds)$ (after possibly decreasing $\epsilon$) to which the critical Weinstein handles are attached. Here $T^*_{\le \epsilon} \Lambda$ is the cotangent bundle over $\Lambda$ consisting of covectors of norm less than or equal to $\epsilon$ with respect to the standard metric on $\Lambda$, which is identified with a disjoint union of \textit{standard} spheres when we attach Weinstein handles. 

Next, we will construct a Hamiltonian isotopy of $X$ and describe its effect on the skeleton $\LL_{X}$ and the relative skeleton $\LL_{(X_0 \times D^2, \Lambda)}$. 
First, let $f: [0,1] \rightarrow \RR$ be a smooth function with $f(0) = 0$ with positive first derivative $f'(x)$ satisfying $f'(x) = \pi/4$ for $x \in [0,1/3]$, $f'(x) = \epsilon$ for $x \in [2/3, 1]$, and $f'$ is strictly decreasing in $(1/3, 2/3)$. 
Now, using polar coordinates $(r, \theta)$ on $D^2$, we define a Hamiltonian $H: D^2 \rightarrow \mathbb{R}$ as 
$H(r, \theta) = f(r^2)$. 
As the symplectic form on $D^2$ in polar coordinates is $r dr \wedge d\theta$, the associated Hamiltonian vector field $v_H$ is $2f'(r^2) \partial_\theta$ and the Hamiltonian flow is counter-clockwise rotation with the amount of rotation decreasing as the radius increases; the time-one flow is rotation by $\pi/2$ for radius less than $1/3$ and rotation by $2 \epsilon$ for radius greater than $2/3$. 
By pre-composing with the projection to $D^2$, we obtain a Hamiltonian (which we also call $H$ by abuse of notation) $H: X_0 \times D^2 \rightarrow \mathbb{R}$ defined as $H(x, r, \theta) = f(r^2)$. This Hamiltonian flow is constant in the $X_0$-component and the same counter-clockwise rotation in the $D^2$-component. 

Next, we extend this Hamiltonian to the critical Weinstein handles $\cup_\Lambda H^n$. We can take each Weinstein handle $H^n$ to be $T^*_{\le \epsilon}D^n$ with $\partial D^n$ corresponding to a component of the link $\Lambda$. We can also choose the Liouville vector field on $T^*D^n$ to be induced by the vector field $-\frac{1}{2}R\partial_R$ on the zero section $D^n$, where $R$ is the radial coordinate on $D^n$. We note that if we extend $X_0 \times D^2$ slightly into the handle, then $R$ and $r$ are related by $R = 2-r$ (both coordinates can be defined using the Liouville flow in the same way since  the projections of the Liouville vector fields to $D^2$ or $D^n$ is $\frac{1}{2}r \partial_r$ or $-\frac{1}{2}R \partial_R$ and $R$ increases from $1$ when extending past $\partial D^n$ into $X_0\times D^2$). 
Note that $H$ restricted to $X_0 \times S^1$ is constant (equal to $f(1)$) and hence $H$ is constant on each component of $\Lambda$, which corresponds to $\partial D^n$; likewise, $H$ is constant on the attaching region $T^*_{\leq \epsilon}\Lambda \times [-\epsilon, \epsilon]$, which corresponds to $\partial D^n$ and $T^*_{\le \epsilon} \partial D^n \times [-\epsilon, \epsilon]$. 
Furthermore, $H$ is quadratic in the $r$-variable on $D^2$. 
We extend the Hamiltonian to $T^*D^n$ as follows. First, we pick a function $h:[0,1] \rightarrow \mathbb{R}$ that is quadratic up to a constant with the property that $h$ has a maximum at the origin, $h(1) = f(1)$, and $h'(1) = -f'(1) =  - \epsilon$. Then, we define a function (that we also call) $h: D^n \rightarrow \mathbb{R}$ given by $(R, \Theta) \mapsto h(R)$. 
We define the Hamiltonian $H: T^*D^n \overset{\pi}{\rightarrow} D^n \overset{h}{\rightarrow} \mathbb{R}$; that is, $H$ depends only the radial $R$ component of the position coordinate and equals $h(R)$. 
Since both the Hamiltonian on $X_0 \times D^2$ and in $T^*D^n$ are quadratic functions of their respective radius and $r = 2-R$, we obtain a smooth Hamiltonian on $X$. 
Finally, we observe that the Hamiltonian flow on $T^*D^n$ induced by $H: T^*D^n \rightarrow \mathbb{R}$ is given by fiberwise addition of $dh$ since $H$ factors through the projection to the zero-section.

Next, we analyze the effect of the time one flow $\phi$ of $H$ on $\LL_X$.
In what follows, it may be useful to consult \Cref{fig:weinstein_skeleton_isotopya} for depictions of $\LL_X$ and $\phi(\LL_X)$ and the corresponding intersection points. 
First, note that this Hamiltonian preserves the subdomain $X_0 \times D^2$.
In the region $X_0 \times D^2_{\le 1/3}$ of radius at most $1/3$, the Hamiltonian diffeomorphism is rotation by $\pi/2$; since $\LL_{(X_0 \times D^2, \Lambda)}$ is contained in the region with angles between $-\pi/4$ and $\pi/4$, we have that $\phi(\LL_{(X_0 \times D^2, \Lambda)}) \cap \LL_{(X_0 \times D^2, \Lambda)} \cap X_0 \times D^2_{\le 1/3}$ consists of only $\LL_{X_0} \times \{0\}$. 
In the region with radius at least $2/3$, the Hamiltonian diffeomorphism is rotation by $2\epsilon$. The skeleton $\LL_{(X_0 \times D^2, \Lambda)}$ in this region is the cylinder on $\Lambda$. Recall that the Reeb vector field on $X_0 \times S^1$ is $2\partial_\theta$, and so the time-1 Hamiltonian flow on $X_0 \times S^1$ corresponds to time $\epsilon$-Reeb flow. Since $\Lambda$ has no Reeb chords with action less than $\epsilon$, $\phi(\LL_{(X_0 \times D^2, \Lambda)}) \cap \LL_{(X_0 \times D^2, \Lambda)} \cap X_0 \times D^2_{\ge 2/3}$ is the empty set.

Next, we analyze the region consisting of radii between $1/3$ and $2/3$; our analysis of actions is similar to Section 3c) of \cite{Seidel_biased_view}. 
At radius $r$, the Hamiltonian vector field is $v_H = 2f'(r^2) \partial_\theta$. Since the Reeb vector field is $2\partial_\theta$,  if there exists a Reeb chord $\gamma$ of $\Lambda$ of action $T$ and an $r$ with $f'(r^2) = T$, then $t \mapsto (r, \gamma(Tt))$ is a Hamiltonian trajectory $x_\gamma$ that starts and ends on $\LL_{(X_0\times D^2, \Lambda)}$, i.e., corresponds to an intersection point of $\phi(\LL_{(X_0\times D^2, \Lambda)}) \cap \LL_{(X_0\times D^2, \Lambda)}$. 
All Reeb chords have action less than $\pi/4$ (since $\Lambda$ projects to points with angle between $-\pi/4$ and $\pi/4$ and the Reeb vector field is $2\partial_\theta$). Since $f''< 0$, we have that for each $T$, there is exactly one $r$ with $f'(r^2) = T$ so that Reeb chords correspond to Hamiltonian trajectories. Since the Liouville form $\lambda$ in $X_0 \times D^2$ is $\lambda_{X_0}  + \frac{1}{2}r^2 d\theta$, the action of this Hamiltonian trajectory is given by 
$$
A_{H, \lambda}(x_\gamma) = r^2f'(r^2) - f(r^2)
$$
Then, using that $f'' < 0$, we have
$$
\partial_r(r^2 f'(r^2) - f(r^2)) = 2 r^3f''(r^2) < 0 .
$$
Hence as $r$ increases, the action of the corresponding Hamiltonian chord decreases. Furthermore, as $T$ decreases, $r$ decreases since $f'(r^2) = T$ and $f'' < 0$. In conclusion, as $A(\gamma) = T$ increases, so does $A_{H, \lambda}(x_{\gamma})$ as claimed.

Finally, inside each critical Weinstein handle $T^*D^n$, the skeleton is the zero-section $D^n$ and the Hamiltonian flow is given by adding $dh$ in the cotangent fiber direction. 
Thus, $D^n \cap \phi(D^n)$ consists only of constant Hamiltonian trajectories that correspond to critical points of $h$. 
Since $h$ has only one critical point at the origin $0$ of $D^n$, there is one intersection point for each critical Weinstein handle. Furthermore, at constant Hamiltonian trajectories, the action is given by the value of the Hamiltonian, which is $h(0)$ for all critical Weinstein handles. 

Therefore, the intersections $\phi(\LL_X) \cap \LL_X$ correspond to the Reeb trajectories of $\Lambda$ (in an action-increasing way), the critical Weinstein handles (which all correspond to  constant Hamiltonian trajectories with the same action), and the subcritical part of the skeleton $\LL_{X_0} \times \{0\} \subset X_0\times D^2$. To remove this last intersection $\LL_{X_0} \times \{0\}$, we apply a further Hamiltonian isotopy $\psi$ that is supported in $X_0 \times D^2_{\le 1/3}$, i.e. is identity near the boundary. This isotopy is induced by a Hamiltonian isotopy on $D_{\le 1/3}$ that is identity on the boundary and shifts the origin downward and keeps the sector  with angles $3\pi/4$ to $5\pi/4$ within the left hemisphere, i.e., angles $\pi/2$ to $3\pi/2$; see \Cref{fig:weinstein_skeleton_isotopyb}. 
In particular, applying $\psi$ displaces $\LL_{X_0} \times \{0\}$ without creating new intersection points and preserves the old intersection points, as well as their actions. In conclusion, $\psi\circ \phi(\LL_X) \cap \LL_X$ corresponds just to the Reeb trajectories of $\Lambda$ (in an action-increasing way) and the critical Weinstein handles (which all give constant Hamiltonian trajectories with the same action). 
\end{proof}

\begin{figure}
\centering
\begin{subfigure}[t]{.4\linewidth}
\centering
\includegraphics[width=5cm, trim={0 3,5cm 40cm 0},clip]{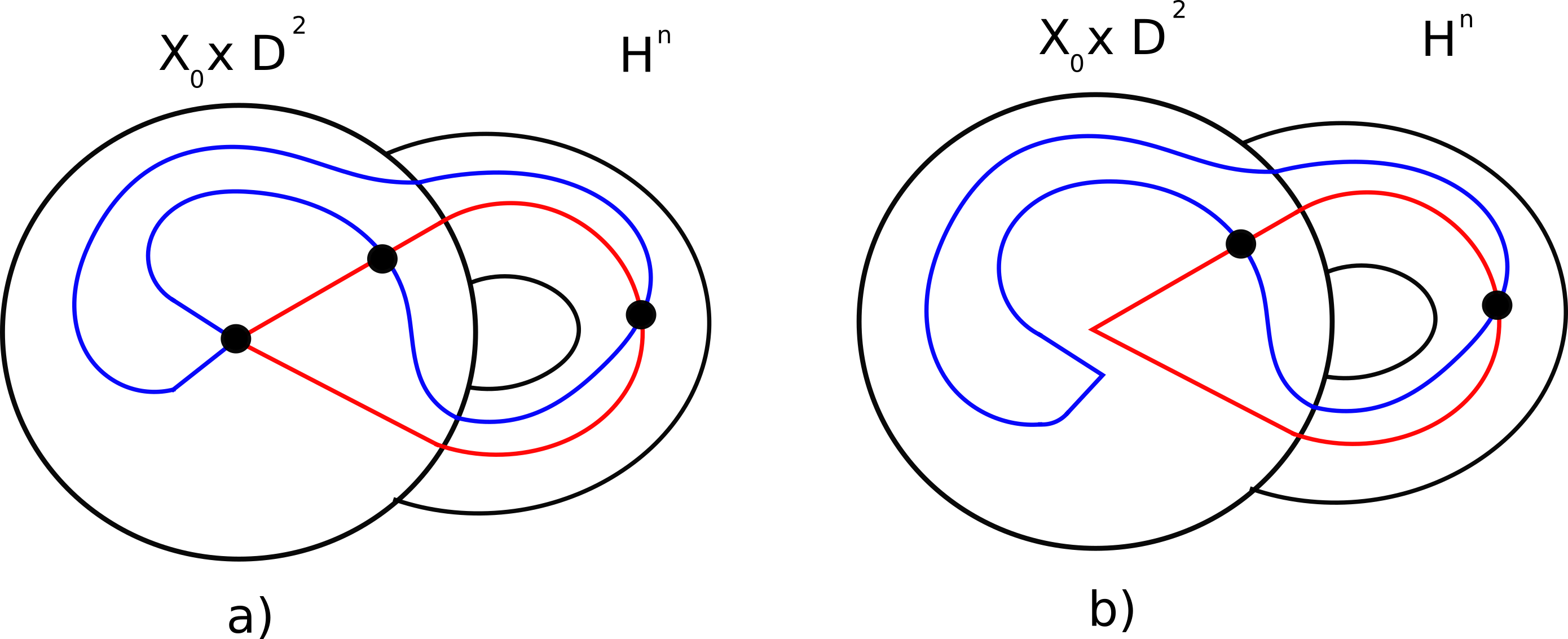}
\caption{The Weinstein skeleton $\LL_X$ in red and its perturbation $\phi(\LL_X)$ in blue; $\LL_X \cap \phi(\LL_X)$ has three components (depicted as black dots) corresponding to (from left to right) 
$\LL_{X_0} \times \{0\}$, the Reeb chords of the attaching Legendrians, and the critical handles.}\label{fig:weinstein_skeleton_isotopya}
\end{subfigure}\;\;\;\;\;
\begin{subfigure}[t]{.4\linewidth}
\centering
\includegraphics[width=5cm, trim={40cm 3.5cm 0 0},clip]{weinstein_skeleton_isotopy.png}
\caption{The Weinstein skeleton $\LL_X$ in red and its perturbation $\psi \circ \phi(\LL_X)$ in blue; now $\LL_X \cap \psi \circ \phi(\LL_X)$ has two components corresponding to the Reeb chords of the attaching Legendrians and the critical handles.}\label{fig:weinstein_skeleton_isotopyb}
\end{subfigure}

\caption{} 
\end{figure}

\begin{remark}
The attaching isotropic spheres of the subcritical handles generically have no Reeb chords between them. Hence we can consider handles of all indices, weighted by the number of Reeb chords (with weight zero for the subcritical handles); this weighted sum will give an upper bound on $\Ddim(\mathcal W(X))$.
\end{remark}

\begin{example}
For example, $T^*S^n$, for $n \ge 2,$ has a Weinstein presentation of the form $D^2 \times D^{2n-2} \cup H^n_{\Lambda_{unknot}}$, where $\Lambda_{unknot} \subset \mathbb{R}^{2n-1} \subset 
\partial(D^2 \times D^2)$ is the Legendrian unknotted sphere.  This Legendrian has a single Reeb chord in $\mathbb{R}^{2n-1}$. This is compatible with the fact that $T^*S^n$ has a Hamiltonian diffeomorphism $\phi$ so that $|S^n \cap \phi(S^n)| = 2$ and  hence
$$
\Rdim(\mathcal W(T^*S^n)) \le  1.
$$
\end{example}
 \subsection{Sectorial LS covers of \texorpdfstring{$T^*M$}{cotangent bundles} from LS-covers}
\label{subsec:lsUpperBound}
In this section, we show that $\LS(M)$ bounds $\LS_{cat}^R(\mathcal W(T^*M; R))$ which also gives a bound on Rouquier dimension by \cref{prop:TLSCatToRdim}. 

We need a preparatory lemma.
\begin{lemma}
    Let $\{U_\alpha\}_{\alpha=1}^k$ be an LS covering of compact manifold $M$. Then there exists $\{i_\alpha:V_\alpha\to M\}_{\alpha=1}^k$ and an LS covering of $M$ where each $V_I=\bigcap_{\alpha\in I} V_\alpha$ is a closed manifold with corners.
    \label{lem:LSManifold}
\end{lemma}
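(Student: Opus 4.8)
The plan is to realize the $V_\alpha$ as generic codimension-zero submanifolds with corners cut out from superlevel sets of a partition of unity subordinate to $\{U_\alpha\}$. Fix a smooth partition of unity $\{\rho_\alpha\colon M\to[0,1]\}_{\alpha=1}^k$ with $\Supp(\rho_\alpha)\subset U_\alpha$ and $\sum_\alpha\rho_\alpha\equiv 1$, and for a tuple $\mathbf c=(c_1,\dots,c_k)$ with each $c_\alpha\in(0,1/k)$ set $V_\alpha:=\rho_\alpha^{-1}([c_\alpha,1])$ with $i_\alpha\colon V_\alpha\hookrightarrow M$ the inclusion. Since $c_\alpha>0$ we have $V_\alpha\subset\Supp(\rho_\alpha)\subset U_\alpha$, so $i_\alpha$ factors through $U_\alpha\hookrightarrow M$ and is null-homotopic; and since $\sum_\alpha\rho_\alpha(x)=1$ forces some $\rho_\alpha(x)\ge 1/k>c_\alpha$, the sets $V_\alpha$ cover $M$. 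Thus $\{i_\alpha\}$ is an LS covering for every such $\mathbf c$, and compactness of $M$ makes each intersection $V_I$ compact. It remains to choose $\mathbf c$ so that all of the $V_I$ are manifolds with corners.

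For this I would use a transversality argument. For a nonempty subset $J\subset\{1,\dots,k\}$ write $F_J=(\rho_\alpha)_{\alpha\in J}\colon M\to\RR^J$. By Sard's theorem the critical values of $F_J$ form a measure-zero subset of $\RR^J$, so the set of $\mathbf c\in(0,1/k)^k$ for which $\mathbf c_J:=(c_\alpha)_{\alpha\in J}$ is a critical value of $F_J$ is measure zero; as there are only finitely many such $J$, a generic $\mathbf c$ has $\mathbf c_J$ a regular value of $F_J$ for \emph{all} $J$ simultaneously. Fix such a $\mathbf c$.

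Finally I would verify that this genericity makes each $V_I=\bigcap_{\alpha\in I}\rho_\alpha^{-1}([c_\alpha,1])$ a manifold with corners. Given $x\in V_I$, let $J=\{\alpha\in I:\rho_\alpha(x)=c_\alpha\}$ record the active constraints; near $x$ the inactive ones impose nothing. Because $\mathbf c_J$ is a regular value of $F_J$, the covectors $\{d\rho_\alpha(x)\}_{\alpha\in J}$ are linearly independent, so in suitable local coordinates $\rho_\alpha=c_\alpha+x_\alpha$ for $\alpha\in J$, and $V_I$ is locally $\{x_\alpha\ge 0:\alpha\in J\}$, the standard corner chart; hence $V_I$ is a compact manifold with corners (the empty manifold if $V_I=\varnothing$). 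The only real content is the simultaneous-regular-value step --- producing one choice of thresholds that works for all $2^k$ intersections at once --- together with the routine local computation above; there is no homotopical subtlety, since shrinking each set inside $U_\alpha$ automatically preserves the null-homotopy condition.
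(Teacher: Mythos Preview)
Your proof is correct and takes a genuinely different route from the paper's. The paper works with distance-to-boundary functions $d_\alpha$ on each $U_\alpha$, shows by a Lebesgue-number--type compactness argument that the slightly shrunk superlevel sets still cover, smooths the $d_\alpha$ to $f_\alpha$, picks a regular value of each $f_\alpha$ to obtain preliminary domains $V_\alpha^{pre}$ with smooth boundary hypersurfaces $H_\alpha$, and finally perturbs the $H_\alpha$ to $H_\alpha'$ so that they intersect transversely. Thus the paper separates the construction into two stages: first make each boundary smooth, then make the boundaries mutually transverse.

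Your partition-of-unity approach compresses both stages into one. Smoothness is free since the $\rho_\alpha$ are already smooth; coverage is automatic from $\sum_\alpha \rho_\alpha = 1$ and the pigeonhole bound $\max_\alpha \rho_\alpha(x)\ge 1/k$, with no $\varepsilon$-argument needed; and the simultaneous Sard step on the maps $F_J=(\rho_\alpha)_{\alpha\in J}$ delivers both regularity of each $\partial V_\alpha$ and transversality of all intersections at once. The paper's approach is more hands-on and perhaps makes the geometric picture (shrinking each $U_\alpha$ inward) more explicit, but your argument is tighter and avoids the somewhat informal ``perturb the hypersurfaces to intersect regularly'' step. Both yield the same conclusion with no essential difference in strength.
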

\begin{proof}
    For each $U_\alpha$, consider the function $d_\alpha:U_\alpha\to \RR$ which measures the distance from the boundary. For $t\in \RR$, let $U_\alpha^t:= d_\alpha^{-1}(t, \infty)$. 

    For each $x\in \partial U_\alpha$, there exists an $\eps>0$ so that $B_\eps(x)$ is contained in at least one other $U_j$. Since $M$ is compact, we can choose $\eps$ sufficiently small so that for all $\alpha$, and $x\in \partial U_\alpha$, there exists $\beta\neq \alpha$ so that $B_\eps(x)\subset U_\beta$. Therefore, the open subsets $\{U_\alpha^t\}$ form a cover for $M$ whenever  $t\in [0, \eps/2)$. They also form an $LS$ cover as $U_\alpha^t\subset U_\alpha$.

    While the $d_\alpha$ are not smooth, they can be approximated by smooth $f_\alpha:M\to \RR$. We choose the approximation in such a way that the level sets of $d_\alpha, f_\alpha$ are close i.e. for all $t\in \RR,$
    \[d(d_\alpha^{-1}(t), f_\alpha^{-1}(t))< \eps/16.\]
    Simply put, the level sets, and hence superlevel sets, are close to each other.
    Choose a regular value of $t\in(3\eps/16, \eps/4)$. Then $V_\alpha^{pre}:=f_\alpha^{-1}(t, \infty)$ is an open manifold with boundary and satisfies $U_\alpha^{\eps/2} \subset V_\alpha^{pre}\subset U_\alpha$.

    Finally, we need to make the boundaries transverse. Let $H_\alpha=\partial V_\alpha^{pre}$. Take $H_\alpha'$ a small perturbation of $H_\alpha$ so that 
    \begin{itemize}
        \item The $H_\alpha'$ intersect regularly
        \item  $d(H_\alpha', H_\alpha)<\eps/16$,
        \item $H_\alpha$ are smoothly isotopic to $H_\alpha'$
    \end{itemize}
    Then let $V_\alpha$ be the connected component of $U_\alpha$ which is bounded by the hypersurface $H_\alpha'$. It satisfies the property that $U_\alpha^{\eps/2}\subset V_\alpha \subset U_\alpha$, and therefore forms an LS-cover.
\end{proof}

\begin{prop}
    \label{prop:LSboundLS}
Let $M$ be a smooth compact manifold, possibly with boundary. Then for any commutative ring $R$ and the canonical grading/orientation data on $T^*M$,
 $$
 \LS_{cat}^R(\mathcal{W}(T^*M; R)) \le \LS(M) 
 $$
\end{prop}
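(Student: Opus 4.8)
The plan is to turn a Lusternik--Schnirelmann covering of $M$ into a sectorial LS covering of $T^*M$ and then invoke \cref{prop:TLSCatToRdim} together with \cref{prop:lagrangianCobordism}-style cobordism arguments (more precisely \cref{cor:Weinsteinresolve}) to control the Rouquier dimension of the essential images. First I would apply \cref{lem:LSManifold} to replace an arbitrary LS cover $\{U_\alpha\}_{\alpha=1}^k$ of $M$ (with $k = \LS(M)$) by an LS cover $\{V_\alpha\}$ in which every intersection $V_I = \bigcap_{\alpha \in I} V_\alpha$ is a compact manifold with corners and the inclusions $V_\alpha \hookrightarrow M$ remain null-homotopic. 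Taking cotangent bundles, $T^*V_\alpha \hookrightarrow T^*M$ is a proper inclusion of Weinstein sectors (as in the Example following the definition of Weinstein sector), and $\{T^*V_\alpha\}$ is a Weinstein sectorial covering of $T^*M$ with $T^*V_I = \bigcap_{\alpha \in I} T^*V_\alpha$; its nerve $\Sigma$ has depth $D(\Sigma) \le k = \LS(M)$. By sectorial descent \cite{ganatra2018sectorial}, $\mathcal{W}(T^*M)$ is presented as the homotopy colimit of the diagram $I \mapsto \mathcal{W}(T^*V_I)$.

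The crux is to show that this presentation is an $R$-Lusternik--Schnirelmann covering in the sense of \cref{def:triangulatedLS}, i.e. that the functor $j_I \colon \mathcal{W}(T^*V_I) \to \mathcal{W}(T^*M)$ has $\Rdim^R(j_I) = 0$ for every $I$. Since $\phi_I$ factors through each $\phi_\alpha$ with $\alpha \in I$, it suffices to treat $j_\alpha \colon \mathcal{W}(T^*V_\alpha) \to \mathcal{W}(T^*M)$. Here I would use that $V_\alpha$ is a compact manifold with corners whose inclusion into $M$ is null-homotopic. The null-homotopy means that $T^*V_\alpha$, once included into $T^*M$, can be displaced or contracted so that its admissible Lagrangians push off the zero section; concretely, the image of any object of $\mathcal{W}(T^*V_\alpha)$ is, after a Hamiltonian isotopy induced by the null-homotopy, supported on a Lagrangian that meets the skeleton $M \subset T^*M$ in a controlled way. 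The cleanest route: pick a Morse function adapted to the null-homotopy so that the relevant Lagrangians, restricted to a subdomain limiting to the skeleton, become collections of cocores sharing a single action value, and then apply \cref{cor:Weinsteinresolve} (or directly \cref{prop:lagrangianCobordism}) to conclude that the essential image of $j_\alpha$ lies in $\langle T^*_x M \rangle_1^R$, i.e. $\Rdim^R(j_\alpha) = 0$. One must also check this uniformly over the coefficient ring $R$ and the canonical grading data, but the cobordism arguments of \cref{sec:symplecticTools} are stated over arbitrary $R$, so no new input is needed.

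Granting this, the diagram $I \mapsto \mathcal{W}(T^*V_I)$ is an $R$-LS covering of $\mathcal{W}(T^*M; R)$ of depth at most $\LS(M)$, so by the definition of $\LS_{cat}^R$ we get $\LS_{cat}^R(\mathcal{W}(T^*M; R)) \le \LS(M)$, and \cref{prop:TLSCatToRdim} then yields the Rouquier dimension bound as a corollary. The main obstacle I anticipate is the vanishing $\Rdim^R(j_\alpha) = 0$: one has to ensure that null-homotopy of $V_\alpha \hookrightarrow M$ really does force the image subcategory to be generated in time zero by a single cocore, uniformly in $R$. The subtlety is that objects of $\mathcal{W}(T^*V_\alpha)$ may be noncompact, and the null-homotopy gives a deformation of the base rather than directly of the Lagrangians; translating it into a Hamiltonian isotopy in $T^*M$ that places the relevant Lagrangians into cocore position at a single action value — so that \cref{cor:Weinsteinresolve} gives resolution time $0$ rather than merely a finite bound — is where the careful geometry lives. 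A secondary, more technical point is verifying that the $V_I$ produced by \cref{lem:LSManifold} are genuinely Weinstein sectors and that their cotangent bundles form a sectorial cover in the precise sense of \cite[Definition 1.32]{ganatra2018sectorial}, which should follow from the corners structure but deserves a remark.
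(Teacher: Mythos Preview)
Your setup is correct and matches the paper exactly: apply \cref{lem:LSManifold}, take cotangent bundles to get a sectorial cover, and reduce to showing $\Rdim^R(j_\alpha)=0$ for each $\alpha$, after which \cref{prop:TLSCatToRdim} and sectorial descent finish the argument.

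The gap is in your proposed mechanism for $\Rdim^R(j_\alpha)=0$. A null-homotopy of the inclusion $V_\alpha\hookrightarrow M$ is \emph{not} a diffeotopy, so it does not lift to a Hamiltonian isotopy of $T^*M$; there is no reason the image Lagrangians can be Hamiltonian-isotoped to cocores at a single action value, and \cref{cor:Weinsteinresolve} gives you nothing here. If instead you try to realise the null-homotopy as a family of Lagrangian correspondences $N^*\Gamma_{\phi_t}\subset T^*V_\alpha\times (T^*M)^-$ and push Lagrangians through by geometric composition, the output is generically only \emph{immersed}, hence not an object of $\mathcal W(T^*M)$ as set up in the paper. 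The paper itself flags exactly this obstruction in the paragraph immediately following the proof (``It would be desirable to give a purely symplectic-geometric proof\ldots Unfortunately, the resulting Lagrangian submanifold $L'$ may be immersed'').

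The paper's actual argument avoids symplectic geometry at this step entirely. It invokes the Abouzaid equivalence $\Perf\mathcal W(T^*N)\simeq\Perf C_{-*}(\Omega N)$, naturally in $N$, so that $j_\alpha$ is identified with the functor $\psi_\alpha\colon\Perf C_{-*}(\Omega V_\alpha)\to\Perf C_{-*}(\Omega M)$ induced by the inclusion. Because that inclusion is null-homotopic, $\psi_\alpha$ is homotopic to the functor induced by the constant map $V_\alpha\to\{x\}\hookrightarrow M$, whose essential image is $\langle C_{-*}(\Omega M)\rangle^R$ --- i.e.\ the image of a single cotangent fiber --- giving $\Rdim^R(j_\alpha)=0$ immediately and over any $R$. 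This algebraic passage through based loop spaces is the missing idea in your proposal.
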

\begin{remark}
In particular, in the case when $R$ is a field, we obtain $\Rdim(\mathcal W(T^*M; R))\le \LS(M)$.
\end{remark}
\begin{proof}
Suppose that there is a covering $\{i_\alpha \colon  U_\alpha \hookrightarrow M\}_{\alpha=0}^k$ of $M$ such that $i_\alpha$ is nullhomotopic for all $\alpha$. By \cref{lem:LSManifold}, we may assume that the $U_\alpha$ are manifolds with regularly intersecting boundaries. Then the $T^*U_\alpha$ form a sectorial cover of $T^*M$. For $I\subset \{0, \ldots, k\}$, let $U_I=M \cap \bigcap_{\alpha\in I} U_i$, and for $J\subset I$ let  $i_{IJ}: U_I\to U_J$. We define $\phi_{IJ}:\mathcal W (T^*U_I)\to \mathcal W(T^*U_J)$ be the induced map on cotangent bundles; similarly let $\psi_{IJ}: \Perf(C_{-*}(\Omega U_I))\to \Perf(C_{-*}(\Omega U_J))$ be the induced map on perfect modules over chains on the based loop space. 

We now show that $\Rdim^R(\phi_{I\emptyset}: T^*U_I\to T^*M)=0$. First, we observe that there exist choices of quasi-equivalences $\mathcal F_I : \Perf \mathcal W(U_I)\to \Perf(C_{-*}(\Omega U_I))$ so that the following diagram commutes for all $J\subset I$ (where  $U_\emptyset = M)$
\[
\begin{tikzcd}
\Perf \mathcal W(T^*U_I) \arrow{r}{F_I} \arrow{d}{\phi_{IJ}} & \Perf(C_{-*}(\Omega U_I))\arrow{d}{\psi_{IJ}}\\
\Perf \mathcal W(T^*U_J) \arrow{r}{F_J}  & \Perf(C_{-*}(\Omega U_J))\\
\end{tikzcd}
\]
It is therefore enough to prove that $\Rdim^R(\psi_{I\emptyset}: \Perf(C_{-*}(\Omega U_I))\to  \Perf(C_{-*}(\Omega M)))=0$. Observe that the map $i_{I\emptyset}$ is also null-homotopic, so there is a homotopy equivalence between $\psi_{I\emptyset}$ and the map  $\psi_{Ix}(\Perf(C_{-*}(\Omega U_I))\to  \Perf(C_{-*}(\Omega M))$ which sends $U_I\mapsto x$, where $x\in M$ is some fixed point. Further, $\Rdim^R(\psi_{Ix}:\Perf(C_{-*}(\Omega U_I))\to \Perf( C_{-*}(\Omega M))=0$. 

Since we now have $\Rdim^R(\phi_{I\emptyset}: \mathcal W(T^*U_I)\to \mathcal W(T^*M))=0$, and $\mathcal W(T^*M)$ can be presented as a homotopy colimit over such maps, we obtain the desired bound by \cref{prop:TLSCatToRdim}.
\end{proof}

\begin{remark}
    \label{rem:diagonalLS}
    In fact, a similar argument can be used to bound $\Ddim(\mathcal W(T^*M;R))$ by $\LS(M)(1+\gentime_R(\Perf(R)))$. Given $U_i$ a cover of $M$, one can obtain a cover of $\Delta\subset M\times M$ by sets $V_\alpha=B_\eps(U_\alpha)$, small thickenings of the $U_\alpha$. The $V_\alpha$ are nullhomotopic in $M\times M$. Let $V=B_\eps(\Delta)$ be a small neighborhood of the diagonal. By the same argument as above, the essential image of $\mathcal W(T^*V;R)$ in $\mathcal W(T^*(M\times M);R)$ has Rouquier dimension bounded by the product of 
    \begin{itemize}
        \item the depth of the covering of $V$ by the $V_\alpha$, which is $\LS(M)$
        \item one plus the maximum of the Rouquier dimensions of $j_\alpha: \mathcal W(T^*V_\alpha; R)\to \mathcal W(T^*(M\times M);R)$, which is $\gentime_R(\Perf(R))$
    \end{itemize}
    giving us a bound on the generation time of the diagonal.
\end{remark}

It would be desirable to give a purely symplectic-geometric proof of \cref{prop:LSboundLS}. A first step towards this goal is the following observation
\begin{prop}
Let $f_t: U\to M$ be a nullhomotopy, where $f_0:U\to M$ is an embedding of an open submanifold. Then there exists $U'\subset M$ an open ball so that for any Lagrangian submanifold $L\subset T^*U$ there exists a $L'\subset T^*U'$ which is Lagrangian cobordant to $L$.
\end{prop}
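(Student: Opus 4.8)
The plan is to package the homotopy $f_t$ into a Lagrangian cobordism of Lagrangian correspondences from $T^*U$ to $T^*M$, and then geometrically compose that cobordism with $L$, using the principle that geometric composition of Lagrangian correspondences passes through Lagrangian cobordisms \cite[Proposition C.2]{hanlon2022aspects} recalled in \cref{subsec:cobordism}. For a smooth map $h\colon U\to M$, let $\Lambda_h\subset T^*U^-\times T^*M$ denote the Lagrangian correspondence associated to $h$, namely (up to the sign flip making it a correspondence) the conormal bundle of the graph $\Gamma_h\subset U\times M$; geometric composition with $\Lambda_h$ sends Lagrangians in $T^*U$ to (a priori only immersed) Lagrangians in $T^*M$. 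Each $\Lambda_h$ is conic and strictly exact, its primitive vanishing as for any conormal bundle, so any smooth family $t\mapsto\Lambda_{f_t}$ is an exact Lagrangian isotopy of correspondences.

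After smoothing and reparametrizing, we may assume the homotopy is a smooth map $F\colon U\times\RR\to M$ with $F(\cdot,t)=f_0=\iota_U$ for $t\le 0$ and $F(\cdot,t)=g$ for $t\ge 1$, where $g$ is a chosen smooth map with image in a small open ball $U'\subset M$ around the basepoint of the nullhomotopy; here one extends the given homotopy slightly past $t=1$ to deform the terminal constant map into such a $g$, which stays within maps to $U'$ so that the combined homotopy still begins at $\iota_U$. Then the conormal $N^*\Gamma_F$ of the graph $\Gamma_F\subset U\times\RR\times M$, viewed inside $T^*(U\times\RR\times M)\cong(T^*U^-\times T^*M)\times\CC$ with $\CC=T^*\RR$ playing the role of the cobordism direction, is — after trimming near the sectorial boundary of $T^*U$ and using that $F$ is $t$-independent near $t=0,1$ — an exact conical Lagrangian cobordism
\[
\mathcal K\colon \Lambda_{\iota_U}\rightsquigarrow\Lambda_g
\]
in $(T^*U^-\times T^*M)\times\CC$; concretely, $\mathcal K$ is the Lagrangian suspension of the exact isotopy $\Lambda_{f_t}$, built exactly as in the proof of \cref{prop:lagrangianCobordism}.

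Geometrically composing $\mathcal K$ with $L\subset T^*U$ then produces, by \cite[Proposition C.2]{hanlon2022aspects}, a Lagrangian cobordism
\[
\mathcal K\circ L\colon(\Lambda_{\iota_U}\circ L)\rightsquigarrow(\Lambda_g\circ L)
\]
in $T^*M\times\CC$. Since $\iota_U$ is an open embedding, $\Lambda_{\iota_U}$ is the graph of the open symplectic embedding $T^*U\hookrightarrow T^*M$, so $\Lambda_{\iota_U}\circ L=L$; and since $g(U)\subset U'$, the composition $L':=\Lambda_g\circ L$ is supported in $T^*M|_{U'}=T^*U'$. Thus $\mathcal K\circ L$ exhibits $L$ as Lagrangian cobordant to the Lagrangian $L'\subset T^*U'$, as claimed.

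The main obstacle is that the last paragraph is not literally valid as written: $\Lambda_g\circ L$ is an embedded (or even immersed) smooth Lagrangian only when the underlying composition is transverse or clean, and even then the excerpt already warns (see \cref{subsec:cobordism}) that the composed correspondence need not be exact. So the substance of the argument is to perturb $g$ within maps into $U'$ homotopic to $f_0$, and to perturb $L$ within its Hamiltonian isotopy class, so that $\Lambda_g\circ L$ is a transverse composition yielding an embedded exact conical Lagrangian in $T^*U'$. Since $\dim U=\dim M$ one cannot take $g$ to be an immersion, so one instead arranges $L$ to be transverse to the zero section over the locus where $g$ degenerates and allows $L'$ to be a disjoint union of sheets, which represents a direct sum in $\mathcal W(T^*U')$ as in the remark after \cref{prop:lagrangianCobordism}. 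A secondary technical point suppressed above is carrying out the suspension with $T^*U$ treated as a Liouville sector when $U$ is a manifold with corners, as in \cref{lem:LSManifold}.
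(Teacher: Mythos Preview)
Your approach is essentially the same as the paper's: build the conormal to the graph of the homotopy $U\times I\to M$ as a Lagrangian cobordism of correspondences in $(T^*U^-\times T^*M)\times T^*I$, then geometrically compose with $L$ via \cite[Proposition~C.2]{hanlon2022aspects} to obtain a cobordism from $L=\Lambda_{f_0}\circ L$ to $L'=\Lambda_{f_1}\circ L\subset T^*U'$. The paper's proof stops there; the caveats you raise in your final paragraph (immersedness of $L'$, failure of exactness, transversality of the composition) are not addressed inside the paper's proof either---the paper explicitly acknowledges right after the proposition that $L'$ may be immersed and that further work (bounding cochains, etc.) would be needed to promote this to a Fukaya-categorical statement, so your attempts to patch these issues go beyond what the paper claims or proves.
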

\begin{proof}
First, observe that given two manifolds $N, M$ and a smooth map $\phi: N\to M$, we can define a Lagrangian correspondence $L_\phi\subset T^*N\times T^*M^-$ given by the conormal to the graph $\Gamma_\phi\subset N\times M$. Now observe that the graph of  $\phi_t: U\times I \to M$ defines a Lagrangian submanifold $K_{01}\subset (T^*U\times T^*M^-)\times T^*I$; a cobordism of correspondences $L_{\phi_0}\rightsquigarrow L_{\phi_1}$.  
By \cite[Proposition C.2]{hanlon2022aspects}, we observe that $L=L_{\phi_0}\circ L$ and $L'=L_{\phi_1}\circ L$ are Lagrangian cobordant.
\end{proof}
Unfortunately, the Lagrangian submanifold $L'$ may be immersed. It is an expected feature of Lagrangian correspondences that the geometric composition of unobstructed Lagrangian submanifolds is unobstructed (in the sense of admitting a bounding cochain). Besides the technical challenges of defining the Fukaya category in this setting ($T^*M$, with unobstructed Lagrangian submanifolds) \citeauthor{biran2013lagrangian}'s result on Lagrangian cobordism yielding equivalences should generalize to show that $L$, $L'$ are isomorphic objects of the Fukaya category. Since $L'\subset T^*U'$, the cotangent bundle of a ball, it is isomorphic to some $P\tensor T^*_x M$, where $P\in \Perf R$. This strategy would provide a purely symplectic geometric proof of \cref{prop:LSboundLS}.

Another approach to prove \cref{prop:LSboundLS} is to place stronger conditions on $M$. If $M$ is simply connected and spin, and we work with $\ZZ$ coefficients, then \cite[Theorem 1.8]{lazarev2023prime} proves 
that if $L$ is nullhomotopic in $T^*M$ then it is isomorphic to $P\tensor T^*_xM$. This again provides a proof of \cref{prop:LSboundLS}.

Yet a different approach would be to simply cover $M$ by balls. \cite[Theorem 6.1]{singhof1979minimal} shows that whenever $\LS(M)>\frac{\dim(M)}{2}+2$ that there exists a cover of $M$ by exactly $\LS(M)$ balls. A slight weakening of this is to cover $M$ by sets that are contractible (and not simply null-homotopic). This is closely related to the strong LS category, which is bounded by $\LS(M)+1$ \cite{cornea95lscategory}.

\subsection{Sectorial LS coverings of Lefschetz fibrations}
\label{subsec:sectorialLSLefschetz}
A generalized abstract Weinstein Lefschetz fibration \cite[Definition 1.9]{giroux2017existence} is a tuple $(F, \{V_i\}_{i=1}^k)$ where $F$ is a Weinstein domain (the fiber) and $V_i\subset F$ are a collection of exact parameterized Lagrangian spheres (the vanishing cycles). 
We say `generalized' as we slightly extend their definition by allowing each $V_i=\bigsqcup_{j\in I_i} V_{i}^j$ to be a collection of disjoint Lagrangian spheres $V_{i}^j$ in $F$. Observe that the $V_i^j$ with different $i$ may intersect. 
From this data, one can construct a Weinstein domain 
$$
X := (F \times D^2) \cup \left(\coprod_{i,j} H^n_{V_i^j \times \theta_i}\right)
$$
where we attach Weinstein handles $H^n_{V_i^j\times \theta_i}$ to the Legendrian sphere 
$V_i^j \times \theta_i \subset F\times S^1 = \partial(F \times D^2)$. 
$X$ is called the total space of the Lefschetz fibration.
The corresponding abstract Weinstein Lefschetz fibration resembles a Lefschetz fibration with $k$ critical values and $\sum_{i=1}^k |I_{i}|$ critical points.  
By work of Giroux-Pardon \cite[Theorem 1.10]{giroux2017existence}, any Weinstein domain 
admits an abstract Lefschetz fibration. 
In addition, \cite[Section 6.2]{giroux2017existence} discusses the relationship between abstract Weinstein Lefschetz fibrations and other defintions of Lefschetz fibrations.

\citeauthor{bai2021rouquier} \cite{bai2021rouquier} showed that if $X$ is the total space of an abstract Lefschetz fibration $\pi: X \rightarrow \mathbb{C}$, then
\[ \Rdim(\mathcal W(X)) \le \sum_{i=1}^k|I_i|-1 .\] 
In this section, we improve this result by showing that 
the diagonal dimension as well as the sectorial covering number of an abstract Lefschetz fibration (both of which bound $\Rdim(\mathcal W(X))$), are bounded by $k-1$. 
\begin{prop} \label{prop:Lefschetzddim}
Suppose $(F, \{V_i\}_{i=1}^k)$ is a generalized abstract Weinstein Lefschetz fibration. Then, there is a Hamiltonian $H$ on the total space $X$ whose time one flow  $\phi: X \rightarrow X$ satisfies
$$
|A_{H,\lambda}( \LL_X \cap \phi(\LL_X))| \leq k.
$$
In particular, 
$$
\Ddim( \mathcal W(X)) \le k -1 .
$$
\end{prop}
\begin{proof}
Observe that the Reeb flow on $F\times \mathbb{R}$ is $\partial_z$ in the $\mathbb{R}$-direction and Reeb chords correspond to intersection points of the Lagrangian projection to $F$ of the various $V_i^j\times \theta_i$. Hence, since $V_i^j$ are each embedded in $F$, there are no Reeb chords between the $V_i^j$ for fixed $i$ and varying $j$. There may be Reeb chords between $V_i^j \times \theta_i$  and 
$V_{i'}^{j'} \times \theta_{i'}$
for different $i, i'$ (if the Lagrangian spheres $V_i^j, V_{i'}^{j'}$ intersect). However, all of these Reeb chords have an action which is given by the difference $2\pi i'/k-2\pi i/k$ for some $i' > i$ and hence there are at most $k-1$ different action values. 
Hence, by \cref{prop: attaching_sphere_reeb_chords}, there exists a Hamiltonian diffeomorphism $\phi$ induced by a Hamiltonian $H$ so that $|A_{H, \lambda}(\LL_X \cap \phi(\LL_X))|$ is at most $(k-1) + 1  = k$, and therefore, we obtain an upper bound on $\Ddim(\mathcal W(X))$ by $k-1$ as desired. 
\end{proof}
\begin{example}
    Let $F$ be the $n+2$ punctured cylinder, and let $V_1^1, \ldots, V_1^n$ be a collection of $n$-exact Lagrangian spheres so that $F\setminus V_1$ is a disjoint union of pair of pants. Then the abstract Lefschetz fibration $(F, \{V_i\})$ has Rouquier dimension zero, even though the associated Lefschetz fibration has multiple critical points. 
    
    This is related to the following example (which doesn't fit into the framework of Weinstein domains). 
    Let $X_n$ be the resolution of the $A_n$-singularity by iterated blowup. 
    This is a toric Calabi-Yau. An equivariant section of the toric canonical bundle vanishes on the complement of the algebraic torus; because the toric canonical is trivial this provides a map $\pi: X_n\to \CC$ which is a Lefschetz fibration with a single critical value. It follows that the Rouquier dimension of a (hypothetical) Fukaya-Seidel category $\mathcal {FS}(X_n,\pi)$ is zero even though the Lefschetz fibration has multiple critical points.
\end{example}

\begin{remark} 
The Weinstein handle attachment in Lefschetz fibrations is quite special in that the attaching Legendrian sphere in $F \times S^1 = \partial(F\times D^2)$ has embedded Lagrangian projection in $F$. 
On the other hand, if we take an arbitrary Legendrian sphere $\Lambda \subset F\times S^1$ and attach a handle to obtain the domain $F\times D^2 \cup H^n_\Lambda$, the Rouquier dimension $\Rdim(F \times D^2 \cup H^n_\Lambda)$ may be arbitrarily large (depending on the dimension of $F$) and bounded in terms of the number of Lagrangian immersion points in $F$, as discussed in \cref{prop: attaching_sphere_reeb_chords}.
\end{remark}

We can also bound the sectorial covering number of $X$ in terms of the critical values of a Lefschetz fibration.

\begin{prop}\label{prop: Lefschetz_covering_bound}
Suppose $(F, \{V_i\}_{i=1}^k)$ is a generalized abstract Weinstein Lefschetz fibration.  Then, there is a covering of the total space $X$ by sectors $U_1, \cdots, U_k$ so that $\Rdim(\psi_i) = 0$ where $\psi_i: U_i \to X$ are the inclusions. In particular,  
$$
\LS_{sect}(X) \le k-1.
$$
\end{prop}
\begin{proof}
We start with the Weinstein presentation for $X$ 
$$
(F \times D^2) \cup \left(\coprod_{i,j} H^n_{V_i^j \times \theta_i}\right).
$$
We observe that $X$ can be covered by subsectors
$$
X_i := (F \times D^2) \cup \left(\coprod_j H^n_{V_i^j\times \theta_i}\right), F \times (S^1 \backslash Op(\theta_i)))
$$
That is, this sector has ambient domain $F \times D^2 \cup \left(\coprod_j H^n_{V_i^j\times \theta_i}\right)$ and has sectorial boundary
$F \times (S^1 \backslash Op(\theta_i))$, where $Op(\theta_i)$ is a small neighborhood of $\theta_i \in S^1$.
We note that there is a proper 
sectorial inclusion $F_i: X_i \hookrightarrow X$ whose union covers $X$; this is because $X$ is obtained from $X_i$ by attaching more Weinstein handles to the sectorial boundary $F \times (S^1 \backslash Op(\theta_i))$. 
Finally, we observe that $\Perf \mathcal W(X_i) \cong \Perf \mathcal W(\coprod_j T^*D^n_{i,j})$. This is because $\coprod_j T^*D^n_{i,j}$ is a Weinstein subdomain of $X_i$ and $X_i$ is obtained from $\coprod_j T^*D^n_{i,j}$ by attaching subcritical handles. 
Alternatively, we observe that the Weinstein skeleton of $X_i$ is $c_F \cup (\coprod_{j} [0,1] \times V_i^j \cup D_{i,j}^n)$, where $D_{i,j}^n$ is the core disk of the Weinstein handle $H^n_{V_i^j}$; since $c_F$ is subcritical, the result $\Perf \mathcal W(X_i) \cong \Perf \mathcal W(\coprod_j T^*D^n_{i,j})$ also follows. 
Since $\Perf \mathcal W(\coprod_{j} T^*D^n_{i,j}) \cong  \oplus_j \Perf \mathcal W(T^*D^n)$ and the Rouquier dimension of a direct sum of categories is the maximum Rouquier dimension, we have $$
\Rdim\left(\mathcal W\left(\coprod_j T^*D^n_{i,j}\right) \right) = \max_{j}  \Rdim(\mathcal W(T^*D^n_{i,j})) = 0
$$
and therefore, $\Rdim(\mathcal W(X_i)) = 0$. 
Hence, we have a sectorial cover of $X$ by subsectors $X_i$ with $\Rdim(\mathcal W(X_i)) = 0$, proving $LS_{sect}(X) \le k -1$ as desired. 
\end{proof}

\subsection{Lower bound on split-generation time of \texorpdfstring{$T^*M$}{the cotangent bundle} by a fiber}

For a topological space $M$ and commutative ring $R$, $\cuplength(M; R)$ is the largest integer $k$  so that there exist positive degree classes $\alpha_1, \cdots, \alpha_k \in H^*(M; R)$ of positive degree whose cup-product does not vanish: 
$$
\alpha_1 \cup \cdots \cup \alpha_k \ne 0.
$$
For any smooth function $f: M\rightarrow \mathbb{R}$ with isolated critical points and commutative ring $R$, Lusternik-Schnirelmann theory \cite{cornea2003LS} provides the following inequalities
\[
\cuplength(M; R)\le \LS(M) \le |\Crit(f)|.
\] 
In this section, we similarly use the cuplength of $M$ to give a lower bound on generation time of $T^*M$ by cotangent fibers. 
\begin{prop}\label{prop:lower_bound_cuplength}
For any closed smooth manifold $M$ and $\mathcal W(T^*M; R)$ with the canonical grading/orientation data and any commutative $R$  we have 
\begin{eqnarray}
\cuplength H^*(M; R) &\le& 
\gentime_{T^*_q M} M 
\end{eqnarray}
where $T^*_qM, M$ are objects of $\mathcal W(T^*M;R)$.
\end{prop}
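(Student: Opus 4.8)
The plan is to translate the cup-length lower bound into a statement about how many cones are needed to build the zero section $M$ from a cotangent fiber $T^*_qM$, by exploiting the ring structure on $HF^\bullet(M,M) \cong H^\bullet(M;R)$ (using the canonical grading) and the module structure of $HF^\bullet(T^*_qM, M) \cong H^\bullet(\text{pt})$-type data over it. Concretely, suppose $M \in \langle T^*_qM \rangle_{k+1}$, i.e.\ $M$ is a summand of an iterated cone $[G_k \to \cdots \to G_0]$ with each $G_i$ a shifted sum of copies of $T^*_qM$ (after possibly passing to $\langle\cdot\rangle^R$, which only helps since tensoring with $\Perf R$ does not change the relevant vanishing statements — this is where I would be careful about the $R$ vs.\ field distinction). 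The key point is that applying the functor $\hom_{\mathcal W(T^*M)}(T^*_qM, -)$ sends $T^*_qM$ to something quasi-isomorphic to $R$ concentrated in a single degree (the wrapped Floer cohomology of a cotangent fiber with itself computes $C_{-*}(\Omega_q M)$, but the relevant small piece here is just the unit), so $\hom(T^*_qM, M)$ — which computes $C_{-*}(M)$ or dually $H^\bullet(M;R)$ up to the identification in \cite{abouzaid2012wrapped} — is obtained as a $(k+1)$-step iterated extension of free $R$-modules.

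The mechanism for converting "iterated extension length" into "cup-length" is the standard Lusternik–Schnirelmann/Ginzburg-style argument: if $H^\bullet(M;R)$ (as a module, or better as an $A_\infty$-module, over the endomorphism algebra $HF^\bullet(M,M)$) admits a filtration of length $k+1$ with subquotients on which the augmentation ideal acts by zero, then any product of $k+1$ positive-degree classes in $HF^\bullet(M,M)$ must annihilate $H^\bullet(M;R)$; since the unit lies in $H^\bullet(M;R)$, such a product of $k+1$ positive-degree cohomology classes is zero, which is exactly the statement $\cuplength H^\bullet(M;R) \le k$. So the first step is to set up this filtration: an iterated cone $[G_k \to \cdots \to G_0]$ gives a filtration of $M$ (as an object, hence of $\hom(T^*_qM, M)$ as a module over $A := \hom_{\mathcal W}(M,M)$) whose associated graded pieces are the images of the $G_i$, on which $A$ acts only through its cohomology unit (since $\hom(T^*_qM, G_i)$ is a free module on the unit). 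The second step is the purely algebraic lemma that positive-degree elements of $A$ act nilpotently of order $\le k+1$ on any module admitting such a filtration — this is a telescoping/diagram-chase through the long exact sequences of the filtration, exactly as in the classical proof that cup-length bounds LS-category.

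The main obstacle I expect is bookkeeping the $A_\infty$ and grading subtleties rather than any conceptual difficulty: one must ensure that the identification $HF^\bullet(M,M) \cong H^\bullet(M;R)$ is as \emph{rings} (this is where the canonical grading/orientation data on $T^*M$ is essential — with other gradings the ring structure, and hence "cup-length," is not the topological one), and that the module $HF^\bullet(T^*_qM, M)$ over it is the "stalk" module on which the positive-degree part acts as it does on $R$ inside $H^\bullet(M;R)$. I would cite \cite{abouzaid2012wrapped} for the identification of $M$ as a twisted complex of fibers and for the compatibility of the relevant algebraic structures, and \cite{abouzaid2011cotangent} for the ring isomorphism $HF^\bullet(M,M)\cong H^\bullet(M;R)$. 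The passage to $\langle\cdot\rangle^R$ and arbitrary $R$ is handled by noting that tensoring the generator with an object of $\Perf R$ only enlarges the free modules appearing as subquotients, leaving the nilpotency-order argument intact; and splitting off a summand $M$ from the iterated cone only improves the bound. Modulo these checks, the proof is the classical cup-length $\le$ filtration-length argument transported across the equivalence $\mathcal W(T^*M) \simeq \Perf C_{-*}(\Omega M)$.
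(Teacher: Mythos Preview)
Your approach is correct and is essentially the same argument as the paper's, though the paper packages it more cleanly by invoking Rouquier's \emph{ghost lemma} \cite[Lemma~4.11]{rouquier2008dimensions} rather than reproving it in situ. Concretely, the paper observes that each positive-degree class $a_i \in H^*(M;R) \cong WH^*(M,M)$ is a \emph{ghost map} for $T^*_qM$, meaning $WH^*(T^*_qM, a_i) = 0$; this holds simply because $WH^*(T^*_qM, M)$ is supported in a single degree. The ghost lemma then says directly that if $M \in \langle T^*_qM \rangle_{k+1}$, any composite of $k+1$ ghost maps vanishes, so $a_1 \cdots a_{k+1} = 0$ in $WH^*(M,M)$. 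Your ``filtration with subquotients on which the augmentation ideal acts by zero, hence nilpotency of order $\le k+1$'' is exactly the inductive proof of the ghost lemma unwound in this special case; the ``telescoping/diagram-chase through the long exact sequences'' you describe is that induction.

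Two small cleanups would tighten your write-up. First, your discussion of $\hom(T^*_qM, T^*_qM) \simeq C_{-*}(\Omega_qM)$ is a red herring: the only fact you need is that $WH^*(T^*_qM, M)$ (one transverse intersection point) is concentrated in a single degree, which forces the positive-degree $a_i$ to act by zero under $\hom(T^*_qM,-)$. Second, the detour through $\langle\cdot\rangle^R$ is unnecessary for the statement as written, since $\gentime_{T^*_qM}M$ refers to ordinary generation; the ghost lemma already handles summands and shifts over any commutative ring.
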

\begin{proof}
We will explain how the ghost lemma \cite[ Lemma 4.11]{rouquier2008dimensions} implies that if $M$ is a twisted complex of cotangent fibers, this complex must have at least $\cuplength(H^*(M))$ - 1 many cones. 
Namely, let $a_1, \cdots, a_k \in H^*(M)$ be positive degree cohomology classes so that 
$a_1 \cup \cdots \cup a_k \ne 0$. We can view $a_i$ as endomorphisms of $M\in \mathcal W(T^*M)$ since there is an isomorphism of algebras $H^*(M) \cong WH^*(M, M)$. 
Then the map
$$
 WH^*(T^*_q M, a_i) =0
 $$
since $WH(T^*_q M, M)$ is supported in a single degree while $a_i$ have positive degree. On the other hand, 
\[
WH^*(M, a_1 \cdots a_k) \ne 0 .
\]
So $M, T^*_qM$ and the sequence of morphisms $\{a_i: M\to M\}$ satisfy the conditions of the ghost lemma. Therefore any presentation of $M$ as a twisted complex of cotangent fibers must have at least $k-1$ cones, from which the bound follows.
\end{proof}

\begin{remark}
The choice of grading/orientation data in \cref{prop:lower_bound_cuplength} is necessary to realize the zero section as an object of the wrapped Fukaya category. 
For a general choice of grading/orientation data, the zero section may fail to be an object.
For example, when the grading/orientation data on $T^*\CP^2$ comes from the trivial spin structure, the zero-section is not relatively spin as $w_2(\CP^2) \ne 0$ and therefore not an object of the wrapped Fukaya category. 
Furthermore, there is an example due to Seidel (found in \cite[Section 4.6.1]{abouzaid2014symplectic}), stating that the wrapped Fukaya category of $T^*\CP^2$ with this grading/orientation data is 2-torsion, and therefore is trivial if one uses coefficients where $2$ is invertible, such as $\mathbb{Q}$. On the other hand, the cuplength of $\mathbb{C}P^2$ over $\mathbb{Q}$ is still 2. In particular, the Rouquier dimension can depend on the grading/orientation data. 
\end{remark}

\begin{remark}
\cref{prop:lower_bound_cuplength} was proven independently in \cite[Theorem 2.14]{favero2023rouquier}.
\end{remark}

\section{Rouquier dimension via embeddings} \label{sec:embedding}

In \cite{bai2021rouquier}, arborealization is used to show that if $X$ is a polarizable Weinstein manifold, then its Rouquier dimension is at most $\dim(X)-3$. In this section, we improve that upper bound in certain cases using an alternative approach that avoids arborealization. 
A key step is to use the doubling trick \cite[Example 10.7]{ganatra2018sectorial} to reduce computations of the wrapped Fukaya category to that of stopped cotangent bundles. 
The reason that this strategy is effective is the following extension of \eqref{eq:bccotangent}.  Let $M$ be an analytic manifold with a subanalytic stop $\stp$. 
Then we can bound the Rouquier dimension of $\mathcal W(T^*M, \stp)$ (with its canonical grading/orientation data) by essentially the same argument as in \cite{bai2021rouquier}. Namely, \cite[Section 2.1]{ganatra2018microlocal} shows that every subanalytic stop on $T^*M$ can be refined to $\stp'$, a conormal stop associated with a triangulation of $M$. 
Small positive pushoffs of the cotangent fibers generate the partially wrapped Fukaya category stopped at $\stp'$, and the subcategory consisting of these objects is posetal of depth $\dim(M)$ \cite[Section 5.8]{ganatra2018microlocal}. As $\mathcal W(T^*M, \stp)$ is a localization of $\mathcal W(T^*M, \stp')$, we obtain 
\[\Rdim(\mathcal W(T^*M, \stp))\leq \Rdim(\mathcal W( T^*M, \stp'))\leq  \dim(M).\]
We thank Laurent C\^{o}t\'{e} for explaining this argument to us.

\begin{prop}\label{prop: embedding_upper_bound}
Suppose that $X$ is a Weinstein domain with a strict analytic embedding into $T^*M$ for a real analytic manifold $M$ with non-empty boundary such that the symplectic normal bundle admits a Lagrangian distribution $E$ that is a  trivial vector bundle.  Then 
$$
\Rdim(\mathcal W(X; E)) \le \dim(M)+1
$$    
where $\mathcal W(X, E)$ is the wrapped Fukaya category of $X$ with grading/orientation data induced by the Lagrangian distribution $E$.

\end{prop}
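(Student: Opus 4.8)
The plan is to reduce to the bound $\Rdim(\mathcal{W}(T^*N,\stp))\le\dim N$ for subanalytically stopped cotangent bundles (established in the discussion preceding the statement) by feeding the embedding $X\hookrightarrow T^*M$ into the doubling trick \cite[Example 10.7]{ganatra2018sectorial}. Recall that the doubling trick identifies the wrapped Fukaya category of a Weinstein domain $W$ with a partially wrapped category $\mathcal{W}(Z,\stp_W)$, where $Z$ is obtained by ``opening up'' $W$ along its contact boundary — so that $Z$ retracts onto a copy of $W\times[0,1]$ — and $\stp_W$ is a Weinstein stop modeled on $\partial_\infty W$. I would apply this with $W$ the image of $X$. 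The point of the hypotheses is that a \emph{strict analytic} embedding $X\hookrightarrow T^*M$, together with the trivialization $E$ of the Lagrangian distribution in the symplectic normal bundle, lets one realize $Z$ — after a harmless stabilization by the cotangent bundle of a disk, which does not change Rouquier dimension and uses that $E$ is trivial — as a codimension-zero Liouville subdomain of $T^*(M\times[0,1])$, with $\stp_W$ extending to a stop $\stp$ on $T^*(M\times[0,1])$ that is subanalytic because it is modeled on the analytic boundary data of $X$. One also uses $E$ to match grading/orientation data: the canonical data on $T^*(M\times[0,1])$ restricts to the data on $X$ determined by $E$.

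Granting this, the Viterbo restriction functor gives an essentially surjective functor $\mathcal{W}\bigl(T^*(M\times[0,1]),\stp\bigr)\to\mathcal{W}(X;E)$ (up to tensoring the target by $\Perf R$, which is immaterial over a field and harmless in general). Since Rouquier dimension does not increase under essentially surjective functors — the corollary following the first lemma of \S\ref{subsec:rdimbackground}, which holds for $\Rdim^{R}$ as well — and since the cotangent-bundle estimate is valid over any coefficient ring (being proved via the posetal generating subcategory of pushed-off cotangent fibers), we obtain
\[
\Rdim(\mathcal{W}(X;E))\ \le\ \Rdim\bigl(\mathcal{W}(T^*(M\times[0,1]),\stp)\bigr)\ \le\ \dim(M\times[0,1])\ =\ \dim M+1 .
\]

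The crux is the first paragraph: one must genuinely check that the doubling construction of \cite[Example 10.7]{ganatra2018sectorial} can be carried out inside a cotangent bundle of an \emph{analytic} manifold and with a \emph{subanalytic} stop — subanalyticity cannot be dropped, since the cotangent-bundle estimate fails for arbitrary stops — and one must track grading/orientation data through the construction, which is exactly where the triviality of $E$ enters. The extra interval factor introduced by the doubling is responsible for the additive $+1$, and the argument gives no way to remove it; this is consistent with the fact that, applied through a codimension-zero embedding $X^{2n}\hookrightarrow T^*N^{n}$ of a Weinstein domain with trivial Lagrangian distribution on its tangent bundle, it recovers the bound $\dim X/2+1$ of \cref{mainthm:embed}.
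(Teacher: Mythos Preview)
Your proposal is essentially correct and follows the same strategy as the paper: stabilize $X$ using the triviality of $E$ to a codimension-zero embedding $X\times T^*[0,1]^{k-n}\hookrightarrow T^*M$, apply the doubling trick to land in a subanalytically stopped cotangent bundle $\mathcal W(T^*(M\times[0,1]),\stp)$, and then invoke the bound $\Rdim(\mathcal W(T^*N,\stp))\le\dim N$.

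One small discrepancy worth noting: the paper does not use a Viterbo restriction functor in the last step. Instead, it cites the \cite{ganatra2018microlocal} result that the doubling construction gives a \emph{fully faithful} functor $\mathcal W(X;E)\hookrightarrow \mathcal W(T^*(M\times[0,1]),D_{X\times T^*[0,1]^{k-n}})$, and then observes that replacing $M\times[0,1]$ by the open manifold $M\times(0,1)$ makes the ambient cotangent bundle subcritical, so this functor becomes a quasi-equivalence. Your invocation of an ``essentially surjective Viterbo restriction'' in the opposite direction is not quite the mechanism at play, and would require separate justification; the quasi-equivalence route is cleaner and is what the paper actually uses. Otherwise your outline matches the paper's proof.
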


\begin{remark}
We observe that the embedding of $X$ may be of high codimension, and we do not require it to be a proper embedding. 
\end{remark}
\begin{proof}
Given such an embedding of $X$ with $\dim(X) = 2n$ and $\dim(M) = k$, there is a codimension zero strict analytic embedding
$$
X \times T^*[0,1]^{k-n} \hookrightarrow T^*M
$$
and hence an analytic hypersurface embedding into the contact manifold $T^*M \times \mathbb{R} \subset S^*(M \times [0,1])$ (as in \cite[Corollary 7.12]{ganatra2018microlocal}). Then the double $D_{X\times T^*[0,1]^{k-n}} = \partial(X \times T^*[0,1]^{k-n})$ is  a Weinstein \textit{domain} that admits a analytic Weinstein hypersurface embedding 
$$
D_{X \times T^*[0,1]^{k-n}} \hookrightarrow S^*(M\times [0,1])
$$
Furthermore, as observed by Ganatra, Pardon, Shende \cite{ganatra2018microlocal}, there is a full and faithful embedding into the partially wrapped Fukaya category
$$
\mathcal W(X, E) \hookrightarrow \mathcal W(T^*(M\times [0,1]), D_{X \times T^*[0,1]^{k-n}} )
$$
 Furthermore, by considering $M
\times (0,1)$ as an open manifold and considering the \textit{subcritical} domain $T^*(M\times (0,1))$, the functor 
$$
\mathcal W(X, E) \hookrightarrow \mathcal W(T^*(M\times (0,1)), D_{X \times T^*[0,1]^{k-n}} )
$$
becomes a quasi-equivalence.  
On the other hand, as discussed at the beginning of this section, the Rouquier dimension of a cotangent bundle of $M$ equipped with a subanalytic stop is at most the dimension of $M$
 and hence
 $$
\Rdim (\mathcal W(T^*(M\times (0,1)), D_{X \times T^*[0,1]^{k-n}})) \le \dim(M\times (0,1)) = k+1
$$
which proves the claim.
\end{proof}

\begin{remark} \label{rem:twistedstab}
The condition in \cref{prop: embedding_upper_bound} that the Lagrangian distribution is a trivial vector bundle arises due to the fact that Ganatra, Pardon, Shende \cite{ganatra2018sectorial} only prove that the Fukaya category is invariant under untwisted stabilization; see \cite[Remark 7.8]{ganatra2018microlocal}. Hence if the Fukaya category were known to be invariant under twisted stabilization, then the triviality condition in \cref{prop: embedding_upper_bound} could be dropped. Alternatively, as noted in \cite[Remark 7.8]{ganatra2018microlocal}, the sheaf-theoretic analogs of the wrapped Fukaya category are known to be invariant under twisted stabilization and hence \cref{prop: embedding_upper_bound} would hold in those settings without any triviality condition. Therefore, by either assuming the twisted stabilization invariance or working with microlocal sheaves the following argument provides an upper bound of $2n+1$ of the Rouquier dimension for any Weinstein domain with a polarization.

Observe that there is a strict inclusion $i: (X, \lambda_X) \hookrightarrow (T^*X, \lambda_{std})$ given by $i(x) = (x, \lambda_X(x))$, i.e., taking the graph of the Liouville form.
The symplectic normal bundle of this embedding has a Lagrangian distribution given by the Lagrangian distribution on $X$ (which may not be a trivial vector bundle). Then applying \cref{prop: embedding_upper_bound}, we obtain an upper bound of $\dim(X) +1 = 2n+1$, under these assumptions. 

Finally, we remark that if $X$ arises from SYZ mirror symmetry without discriminant locus, it carries a Lagrangian distribution, so this argument provides a bound slightly weaker than $\Rdim(D^b\Coh(\check X))\leq 2\dim(\check X)$ proven by Rouquier \cite[Proposition 7.9]{rouquier2008dimensions}.
\end{remark}

\begin{remark}
Eliashberg and Gromov \cite{eliashberg1992embeddings} proved that any Stein (and hence Weinstein) domain $X$ of real dimension $2n$ admits a proper holomorphic embedding into $\mathbb{C}^{3/2 n +1} \cong T^*\mathbb{R}^{3/2 n +1}$, and hence admits a proper strict embedding, after Weinstein homotopy of $X$. However, their result does not make any conclusions about the symplectic normal bundle.
\end{remark}

\begin{cor}
    If $X$ is a Weinstein domain that is formally symplectomorphic to $T^*M$, then $\Rdim(\mathcal W(X; \sigma)) \le \dim(M)+1$, where $\sigma$ is the grading/orientation data on $X$ obtained as the pullback of the canonical grading/orientation data on $T^*M$ by the formal symplectomorphism.
\end{cor}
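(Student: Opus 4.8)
The plan is to reduce the statement to \cref{prop: embedding_upper_bound} by promoting the formal symplectomorphism $\Phi\colon X \to T^*M$ to an honest embedding. First I would unwind the hypothesis: such a $\Phi$ consists of a diffeomorphism together with a homotopy, through fibrewise injective bundle maps $TX \to \Phi^* T(T^*M)$ covering it, from $d\Phi$ to an isomorphism of symplectic vector bundles, and by definition the grading/orientation data $\sigma$ is the pullback along this data of the canonical vertical polarization of $T^*M$. The key point is that if one can realize $\Phi$ by a genuine \emph{strict analytic} Liouville embedding $\iota\colon X \hookrightarrow T^*M$, then, since $\dim X = \dim T^*M$, this embedding has codimension zero, so its symplectic normal bundle has rank zero and trivially admits a trivial Lagrangian distribution $E$; moreover the grading/orientation data induced by $E$ is just the restriction of the canonical polarization of $T^*M$ to $X$, which, by choosing $\iota$ inside the formal homotopy class of $\Phi$, equals $\sigma$. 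Then \cref{prop: embedding_upper_bound} applied to $\iota$ gives $\Rdim(\mathcal W(X;\sigma)) \le \dim(M)+1$ immediately. If $M$ is closed one first replaces it by a manifold with boundary, e.g.\ $M$ with an open ball removed, or uses the evident analogue of \cref{prop: embedding_upper_bound} for cotangent bundles of closed manifolds; this bookkeeping I expect to be routine.

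The crux, and the step I expect to be the main obstacle, is the existence of the strict analytic Liouville embedding $\iota$ in the prescribed formal class. Codimension-zero Liouville embeddings are rigid, so this cannot be produced by a naive $h$-principle for $X \hookrightarrow T^*M$; instead it should follow from the flexibility theory of Weinstein structures and Weinstein embeddings (in the spirit of Cieliebak--Eliashberg and Eliashberg--Murphy), possibly after a preliminary stabilization of the problem and a real-analytic approximation at the end. Concretely, I would try to show that $X$ is Liouville isomorphic to a subdomain of $T^*M$ whose complementary Weinstein cobordism is flexible; equivalently, one could aim to prove that the stabilization $X \times T^*[0,1]$ is Weinstein homotopic to $T^*(M \times [0,1])$ compatibly with the formal data, in which case $\mathcal W(X;\sigma) \simeq \mathcal W(X \times T^*[0,1];\, \sigma \boxtimes \sigma_{0}) \simeq \mathcal W(T^*(M \times [0,1]))$ and the bound $\dim(M)+1$ follows directly from \eqref{eq:bccotangent} without even invoking \cref{prop: embedding_upper_bound}. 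Either route places all the difficulty in a flexibility statement for Weinstein structures; the categorical side of the argument is then purely formal.

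Finally, I would double-check the compatibility of grading/orientation data throughout: that the polarization induced by the trivial normal distribution of a codimension-zero strict embedding agrees with the restriction of the ambient canonical polarization, and that choosing $\iota$ (or the Weinstein homotopy) in the formal class of $\Phi$ indeed returns $\sigma$ rather than some other twist. This is the point where the hypothesis that $\sigma$ is the pullback of the canonical grading/orientation data by the formal symplectomorphism is used in an essential way, and it is what rules out the kind of $2$-torsion phenomenon for other choices of data noted in the remark following \cref{prop:lower_bound_cuplength}; I expect it to be a straightforward unwinding of definitions once the embedding exists.
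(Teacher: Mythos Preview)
Your approach has a genuine gap at precisely the step you flag as the crux. There is no h-principle for codimension-zero Liouville embeddings, and the flexibility results you invoke apply only to \emph{flexible} Weinstein structures, whereas the interesting content of the corollary is that it covers \emph{rigid} exotic Weinstein structures formally symplectomorphic to $T^*M$ --- for instance the exotic $T^*S^n$'s discussed in \cref{subsec:ddimLeqActM}, or the exotic Weinstein balls mentioned immediately after the corollary. Your alternative route fails just as decisively: if $X \times T^*[0,1]$ were Weinstein homotopic to $T^*(M\times[0,1])$ then by stabilization invariance one would have $\mathcal W(X;\sigma) \simeq \mathcal W(T^*M)$, which is false for such exotic $X$. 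So neither route can work in general; you are attempting to establish a symplectic rigidity statement that is simply not true.

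The paper instead goes up one dimension and uses a \emph{contact} h-principle, where one does exist. The formal symplectomorphism $\phi\colon X \to T^*M$ supplies a smooth map $f = \pi\circ\phi\colon X \to M$ and an isomorphism of complex bundles $TX \cong \phi^*T(T^*M) \cong f^*TM \otimes_{\RR} \CC$; this is exactly the formal data for the h-principle of \cite[Lemma 7.6]{ganatra2018microlocal} (recalled in the paragraph following the corollary), yielding an analytic Weinstein \emph{hypersurface} embedding $X \hookrightarrow T^*M \times \RR \subset S^*(M\times[0,1])$. One then runs the doubling argument from the proof of \cref{prop: embedding_upper_bound} to obtain $\mathcal W(X;\sigma) \simeq \mathcal W\bigl(T^*(M\times(0,1)),\,D_X\bigr)$ and bound the right-hand side by $\dim(M\times(0,1)) = \dim(M)+1$. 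The Lagrangian distribution $B = f^*TM$ appearing in the h-principle data is by construction the pullback of the canonical vertical polarization, so the induced grading/orientation data is indeed $\sigma$. The moral: hypersurface embeddings into contact manifolds obey an h-principle even though codimension-zero symplectic embeddings do not, and the $+1$ in the bound is the price of that extra dimension.
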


We conjecture that this bound can be decreased to $n$ by using the fact that $M \times (0,1)$ is an open manifold and hence has no maxima. As an application,  we note that any exotic Weinstein ball diffeomorphic to $B^{2n}$ will have a wrapped Fukaya category with Rouquier dimension at most $n+1$. 

Conversely,  \cite[Lemma 7.6]{ganatra2018microlocal} gives an h-principle that states the necessary condition for admitting a Weinstein hypersurface embedding is also sufficient. More precisely, they proved the following data is equivalent to a Weinstein hypersurface embedding $X\hookrightarrow T^*M$ up to homotopy:
\begin{itemize}
    \item a smooth map $f: X \rightarrow M$
    \item a splitting $f^*TM = B \oplus \mathbb{R}$ for a vector bundle $B$ over $X$
    \item An isomorphism $TX \cong B \otimes_\mathbb{R} \mathbb{C}$ of complex vector bundles
\end{itemize}

In particular, if $X^{2n}$ is a real analytic Weinstein domain that has a Lagrangian distribution $E$ on its tangent bundle that is a trivial vector bundle, we can apply the above criteria (and \cite[Corollary 7.5]{ganatra2018microlocal}) to the constant map $f$ and conclude that  $X^{2n}$ has an analytic Weinstein hypersurface embedding into $T^*\mathbb{R}^n \times \mathbb{R}$. 
As a consequence, we have the following result further supporting the observation in the introduction of \cite{bai2021rouquier} that there are no known examples of Weinstein $X$ with $\Rdim(\mathcal W(X)) > \dim(X)/2$.

\begin{corollary} \label{cor:trivialembed}
If $X$ is a Weinstein manifold with a Lagrangian distribution $E$ that is a trivial vector bundle, then 
$$
\Rdim(\mathcal W(X; E)) \le \frac{\dim(X)}{2}+1
$$
\end{corollary}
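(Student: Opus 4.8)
The plan is to combine the Ganatra--Pardon--Shende h-principle for Weinstein hypersurface embeddings with the argument used to prove \cref{prop: embedding_upper_bound}. The improvement over a direct appeal to \cref{prop: embedding_upper_bound} is that the triviality of $E$ lets us realize $X$ \emph{itself} as a Weinstein hypersurface at infinity of a cotangent bundle of an $(n+1)$-dimensional manifold, where $2n=\dim(X)$, with no need for the intermediate ``thicken and double'' step; this saves exactly one in the resulting bound.

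First I would observe that a trivial Lagrangian distribution on $TX$ forces the complex vector bundle $(TX,J)$ to be trivial for any compatible $J$: since $E\subset TX$ is Lagrangian, $TX=E\oplus JE$ and $J$ identifies this with the complexification, so $(TX,J)\cong E\otimes_\RR\CC$, which is trivial of complex rank $n$. This verifies the formal data required by \cite[Lemma 7.6]{ganatra2018microlocal}: take $f\colon X\to\RR^{n+1}$ the constant map, the splitting $f^*T\RR^{n+1}=\underline{\RR^n}\oplus\underline{\RR}$ (so $B=\underline{\RR^n}$), and the isomorphism $TX\cong\underline{\RR^n}\otimes_\RR\CC=B\otimes_\RR\CC$. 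Invoking \cite[Lemma 7.6]{ganatra2018microlocal} together with \cite[Corollary 7.5]{ganatra2018microlocal}, and replacing $X$ by a Weinstein-homotopic real-analytic model (which changes neither $\mathcal W$ nor the grading/orientation data determined by $E$), I obtain an analytic Weinstein hypersurface embedding $X\hookrightarrow T^*\RR^n\times\RR\subset S^*(\RR^n\times[0,1])$. This is precisely the setting in which $\mathcal W$ is known to be invariant under untwisted stabilization, cf.\ \cref{rem:twistedstab}; the triviality of $E$, as opposed to mere stable triviality, is what keeps us there.

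Next I would transfer the bound along this embedding exactly as in the proof of \cref{prop: embedding_upper_bound}. The full and faithful functor of \cite{ganatra2018microlocal} gives $\mathcal W(X;E)\hookrightarrow\mathcal W(T^*(\RR^n\times[0,1]),X)$, and regarding $\RR^n\times(0,1)$ as an open manifold (so that $T^*(\RR^n\times(0,1))$ is subcritical) this becomes a quasi-equivalence $\mathcal W(X;E)\simeq\mathcal W(T^*(\RR^n\times(0,1)),X)$. By the bound for cotangent bundles equipped with a subanalytic stop recalled at the beginning of \cref{sec:embedding} (the argument of \cite{bai2021rouquier} via \cite{ganatra2018microlocal}), $\Rdim(\mathcal W(T^*(\RR^n\times(0,1)),X))\le\dim(\RR^n\times(0,1))=n+1$. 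Hence $\Rdim(\mathcal W(X;E))\le n+1=\tfrac{\dim(X)}{2}+1$.

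The step I expect to be the main obstacle is the bookkeeping of grading and orientation data: one must check that the data $E$ prescribes on $X$ corresponds, under the identifications in \cite[Lemma 7.6]{ganatra2018microlocal} and the full and faithful embedding, to the canonical grading/orientation data on $T^*(\RR^n\times(0,1))$ for which the Rouquier dimension bound of \cref{sec:embedding} is stated. A secondary issue is that \cite[Lemma 7.6]{ganatra2018microlocal} produces the hypersurface embedding only up to Weinstein homotopy and only smoothly a priori, so one must carefully use \cite[Corollary 7.5]{ganatra2018microlocal} to upgrade it to an analytic embedding and verify that the remaining steps are insensitive to the homotopy.
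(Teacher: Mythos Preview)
Your proposal is correct and follows essentially the same route as the paper. The paper's argument is contained in the paragraph immediately preceding the corollary: apply the h-principle of \cite[Lemma 7.6]{ganatra2018microlocal} with the constant map $f\colon X\to\RR^{n+1}$, $B=\underline{\RR^n}$, and the isomorphism $TX\cong B\otimes_\RR\CC$ coming from the trivial Lagrangian distribution, upgrade to analytic via \cite[Corollary 7.5]{ganatra2018microlocal}, and then run the second half of the proof of \cref{prop: embedding_upper_bound} with the hypersurface $X\hookrightarrow S^*(\RR^n\times[0,1])$ playing the role of the stop; you have simply written out these steps in more detail than the paper does. Your remark that one ``saves exactly one'' by avoiding the thicken-and-double step is a slight mischaracterization (the gain over the naive graph embedding of \cref{rem:twistedstab} is $n$, not $1$), but this does not affect the argument.
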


A simple example that is not a cotangent bundle where the conditions of \cref{cor:trivialembed} hold is a punctured torus. 
For a larger class, note that if $X$ is a smooth hypersurface in $\CC^n$, then $X$ has stably trivial tangent bundle which is then trivial by a theorem of \citeauthor{suslin1977stably} \cite{suslin1977stably}.
Hence, we can take a half-dimensional totally real and hence Lagrangian trivial distribution in $TX$ and apply \cref{cor:trivialembed}.

\section{Conjectures and questions...} \label{subsec:conjectures}
\subsection{... about degenerate intersections and \texorpdfstring{$\mathcal W(T^*M)$}{wrapped Fukaya category of the cotangent bundle}}
\label{subsubsec:degenerate}
There are generalizations of \cref{cor:Weinsteinresolve} to the case when $L$ does not intersect the skeleton transversely, at the cost of adding $\gentime_{R}(\Perf R)$ to our bound for generation time. 

First, we consider the setting of \cref{prop:lagrangianCobordism}. Suppose that $L \subset X$ has 
restrictions $L_1, \cdots, L_k$ to $Y \subset X$ with the property that there exist disks $D_i$ in the smooth strata of $\LL_X$ so that $L_i \subset T^*D_i$. Then $L$ is generated by $L_i$ by \cref{prop:lagrangianCobordism}
and furthermore since $\Perf \mathcal W(T^*D_i; R) \cong \Perf (R)$, each $L_i$ is generated by $T^*_0 D_i$ in time at most 
$\gentime_{R}(\Perf R)$, since $T_0^* D_i$ corresponds to $R$ under the equivalence $\Perf \mathcal W(T^*D_i; R) \cong \Perf (R)$. In summary, such $L$ satisfy the inequality
$$
\gentime_{\mathcal G}(L) \le k -1 + \gentime_R(\Perf R)
$$
giving a variant of the inequality from \cref{cor:Weinsteinresolve}.

As a concrete example, let $f: M \rightarrow \mathbb{R}$ be a smooth function with isolated critical points that are \textit{collared} in the sense that there are disks $D_i \subset M$ containing just the $i$th critical point so that, near the boundary of $\partial D_i$, $f$ takes collared form $r^2 g(\theta)$ for some function $g: \partial D_i^n\rightarrow \mathbb{R}$. 
Let $|\critval(f)|$ be the number of critical values of $f$. Then 
\begin{equation}
\gentime_{T^*_q M}(M) =\gentime_{T^*_q M}(\Gamma_{df}) \le |\critval(f)| -1 + \gentime_R(\Perf R) \label{eq:degeneratebound}
\end{equation}
The collared condition above is needed so that we can conclude that $\Gamma_{df}|_{T^*D_i}$ is a Lagrangian with Legendrian boundary in $T^*D_i$.

\begin{conjecture}
 The inequality given by \Cref{eq:degeneratebound}   holds for any function $f: M \rightarrow \mathbb{R}$ with  isolated critical points, even without the collared condition.
\end{conjecture}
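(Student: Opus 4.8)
The plan is to reduce, exactly as in the collared case discussed above, to a local analysis near each isolated critical point of $f$; the only new input needed is an \emph{a priori} argument showing that the germ of $\Gamma_{df}$ near such a point behaves like a perfect complex of cotangent fibers concentrated at a single action value, without appealing to a normal form. Fix an isolated critical point $p$ with $f(p)=c$ and choose a small disk $D_p\subset M$ centered at $p$ so that $df$ is nowhere vanishing on $\overline{D_p}\setminus\{p\}$; in particular $\Gamma_{df}$ is disjoint from the zero section over a neighborhood of $\partial D_p$. The first step is to show that, after a Hamiltonian isotopy of $T^*M$ supported in a neighborhood of $T^*\partial D_p$ and disjoint from the zero section, the restriction $\Gamma_{df}|_{T^*D_p}$ becomes a conical exact Lagrangian with Legendrian boundary on the sectorial boundary of $T^*D_p$, i.e.\ an honest object of $\mathcal W(T^*D_p;R)$. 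Since $\Gamma_{df}$ is already away from the zero section near $\partial D_p$, there is room to push its boundary locus off to infinity in the fibers and straighten it; the content is only to check this can be done keeping exactness and without creating new intersections with the skeleton. Using $\Perf\mathcal W(T^*D_p;R)\simeq\Perf R$, under which the cotangent fiber corresponds to $R$, the resulting object is isomorphic to $P_p\otimes T^*_pM$ for some $P_p\in\Perf R$, the ``local multiplicity'' of $p$.

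Next I would arrange that the primitive of $\lambda_X|_{\Gamma_{df}}$ is locally constant equal to $c$ near $p$. In the non-degenerate case this is precisely the ``blue curve'' construction from the Perturbing the Skeleton proposition; in general one first replaces $\Gamma_{df}|_{T^*D_p}$ by a flat representative of the same object of $\mathcal W(T^*D_p;R)$ — a cotangent fiber over $p$ with multiplicity $P_p$, on which the Liouville form restricts to zero — and then runs the same argument. Carrying out these modifications near every critical point, with the perturbations near distinct points having disjoint support, produces a Lagrangian $\Gamma'$ Hamiltonian isotopic (or merely isomorphic, if one must pass through the flat representatives) to $\Gamma_{df}$, together with a subdomain $Y\subset T^*M$ completing to $T^*M$, such that $\Gamma'\cap Y=\coprod_p\ell_p$ with $\ell_p\subset T^*D_p$, $\ell_p\cong P_p\otimes T^*_pM$, and the primitive of $\Gamma'$ equal to $f(p)$ on $\partial\ell_p$. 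Grouping the $\ell_p$ by critical value $c_1<\cdots<c_k$ and applying \cref{prop:lagrangianCobordism} yields an iterated mapping cone $\Gamma_{df}\simeq[L_{c_k}\to\cdots\to L_{c_1}]$ in $\Perf\mathcal W(T^*M;R)$ with $L_{c_i}\cong\bigoplus_{f(p)=c_i}P_p\otimes T^*_pM$.

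It then remains to assemble the bound. Each $L_{c_i}$, being a finite direct sum of objects $P_p\otimes T^*_pM$ with $P_p\in\Perf R$, lies in $\langle T^*_qM\rangle_{\gentime_R(\Perf R)+1}$; fix resolutions exhibiting $L_{c_i}$ as a summand of an iterated cone $[B_{i,\gentime_R(\Perf R)}\to\cdots\to B_{i,0}]$ with each $B_{i,j}\in\langle T^*_qM\rangle$. Totalizing the outer cone with these inner resolutions presents $\Gamma_{df}$ as a summand of a twisted complex on the $B_{i,j}$ whose differential is strictly lower triangular for the lexicographic order on $(i,j)$, hence also for the total degree $i+j$. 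Filtering by $i+j$, which takes $k+\gentime_R(\Perf R)$ values with associated graded pieces direct sums of the $B_{i,j}\in\langle T^*_qM\rangle$, gives $\Gamma_{df}\in\langle T^*_qM\rangle_{k+\gentime_R(\Perf R)}$, i.e.\ $\gentime_{T^*_qM}M=\gentime_{T^*_qM}\Gamma_{df}\le|\critval(f)|-1+\gentime_R(\Perf R)$.

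The main obstacle is the first step: producing the admissible local object $P_p\otimes T^*_pM$ at a genuinely degenerate (for instance, infinitely flat) isolated critical point, where no finite-order model for $f$ is available. One must argue abstractly that the straightening of $\Gamma_{df}$ near $\partial(T^*D_p)$ can be performed keeping exactness and admissibility, and, more seriously, that one is free to replace $\Gamma_{df}|_{T^*D_p}$ by a flat representative of the same object when flattening the primitive — the ``isomorphic Lagrangians may be interchanged in the cobordism argument'' issue flagged elsewhere in the paper (see the discussion after \cref{prop:LSboundLS}); granting that input the rest goes through as above. A cleaner route, if it worked, would be to deform $f$ through functions with isolated critical points and no additional critical values into one that is collared near each critical point and then quote the collared case verbatim, but such a deformation need not exist — already $f=e^{-1/x^2}$ cannot be made collared while leaving $f$ unchanged outside a neighborhood of $0$ — so the local Floer-theoretic argument seems unavoidable.
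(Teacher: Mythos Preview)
The statement you are attempting to prove is labeled a \emph{Conjecture} in the paper; the authors do not give a proof and explicitly leave it open. So there is no ``paper's own proof'' to compare against.

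As for the proposal itself, you have correctly located the crux and then, in your final paragraph, conceded exactly the gap that makes this a conjecture rather than a theorem. The issue is your ``first step'': given an isolated but possibly wildly degenerate critical point $p$, you need to deform $\Gamma_{df}|_{T^*D_p}$ rel a neighborhood of the zero section into a conical Lagrangian with Legendrian boundary in the sector $T^*D_p$, without changing its action at $p$ and without creating new intersections with the skeleton. The collared hypothesis is precisely what guarantees this straightening is possible; without it, $df$ on $\partial D_p$ may spiral or oscillate (your own example $e^{-1/x^2}$ already has $df/|df|$ non-constant near the boundary in higher-dimensional analogs), and it is not clear that the Legendrian lift of $\Gamma_{df}|_{\partial D_p}$ can be made genuinely Legendrian by a compactly supported isotopy away from $M$. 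Your fallback --- replacing $\Gamma_{df}|_{T^*D_p}$ by a ``flat representative'' $P_p\otimes T^*_pM$ and then running the cobordism argument --- requires exactly the kind of ``isomorphic Lagrangians may be interchanged in a cobordism'' input that the paper flags as unresolved (the unobstructed/bounding-cochain issue after \cref{prop:LSboundLS}). So the proposal is a coherent strategy with the expected obstacle honestly identified, but it does not close the gap; the conjecture remains open.
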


Similarly, there is a generalization of \cref{cor:genTimeCocore}  in the case when $f: M \rightarrow \mathbb{R}$ has collared isolated critical points that are possibly degenerate,  at the cost of adding $\gentime_{R}(\Perf R)$ to the inequality above.
Namely,   for such functions, we have the inequality
\[\gentime_\mathcal G  \Delta  \leq |\critval(f)|-1 + \gentime_{R}(\Perf R)\]
To see this, we proceed as in the proof of \cref{cor:genTimeCocore} and note that this  function gives a perturbation $\Gamma_{d(f + Q)}$ 
of the diagonal $\Delta_{T^*M} \subset (T^*M)^{op} \times T^*M$, where $Q$ is a quadratic form in the normal direction of the diagonal. The intersections of 
$\Gamma_{d(f + Q)}$ and the skeleton $M \times M$ correspond to critical points of $f$. Near these critical points, $\Gamma_{d(f + Q)}$ is a Lagrangian disk in $(T^*D^n)^{op} \times T^*D^n$, with Legendrian boundary. Although this Lagrangian disk is not a product of Lagrangian disks in  $(T^*D^n)^{op}$ and $T^*D^n$, it is still generated by 
product cotangent fibers $T^*_q D^n \times T^*_q D^n$
in time 
$\gentime_{R\times R}((\Perf R )^{op} \otimes \Perf R) \le \gentime_R(\Perf R)$, which gives us the extra term $\gentime_R(\Perf R)$ in the above inequality. 

It is also natural to ask whether the same can be done for Lefschetz fibrations, whose critical points are non-degenerate. 
\begin{question}
Is there a class of degenerate singularities so that the number of critical values of a holomorphic function $\pi: X\to \mathbb{C}$ with singularities in this class provides an upper bound for $\LS_{cat}(\mathcal W(X;R))$ or $\Ddim(\mathcal W(X;R))$? 
\end{question}

\subsection{... On choices of coefficients for \texorpdfstring{$\mathcal W(T^*M;R)$}{wrapped Fukaya categories of cotangent bundles}}

We note that the $\LS(M)$ bound for the Rouquier dimension is unfortunately very sensitive to the choice of coefficients:
\[\Rdim(\mathcal W(T^*M; R))\leq \LS(M)\cdot (\gentime_R(\Perf(R))+1).\]
However, we know that $\Rdim(\mathcal W(T^*M; R))$ is frequently much smaller from our bound via the diagonal dimension \cref{cor:Morsebound}. This leads us to conjecture:
\begin{conjecture}(\emph{cf.} \cref{q:rdimvsrdimr})
 Let $X$ be Weinstein and $R$ be a commutative ring. The sectorial LS-bound for the Rouquier dimension is not sensitive to the choice of coefficient in the sense that \[\Rdim(\mathcal W(X; R))\leq \LS_{sec}^R(\mathcal W(X))+\gentime_R(\Perf(R)).\]
\end{conjecture}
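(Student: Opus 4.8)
The plan is to deduce the bound from \cref{eq:lssect} together with an affirmative answer to \cref{q:rdimvsrdimr} for wrapped Fukaya categories — in other words, to show that the ``$\Perf R$-twisting'' built into the definition of $\Rdim^R$ costs at most $\gentime_R(\Perf R)$ extra cone-stages \emph{in total}, rather than at every stage as in the na\"ive estimate \eqref{eq:badRdimRbound}. First I would fix an $R$-sectorial LS cover of $X$ realizing $\ell:=\LS_{sect}^R(X)$. As in the proof of \cref{eq:lssect}, it yields an $R$-LS covering $j\colon\Grot(\mathcal F\colon\Sigma\to\Cat)\to\mathcal W(X;R)$ with $D(\Sigma)=\ell$ and $\Rdim^{R}(j_\sigma)=0$ for all $\sigma$. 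Choosing generators $G_\sigma$ realizing $\Rdim^{R}(j_\sigma)=0$ and setting $G:=\bigoplus_\sigma G_\sigma$ (a finite sum, the cover being finite), the argument of \cref{prop:TLSCatToRdim} upgrades to give $\mathcal W(X;R)\subseteq\langle G\rangle_{\ell+1}^R$. It then suffices to prove the purely categorical statement: for $\mathcal C=\mathcal W(X;R)$, any generator $G\in\Perf\mathcal C$, and $g_R:=\gentime_R(\Perf R)$,
\[\langle G\rangle_{\ell+1}^R\ \subseteq\ \langle G'\rangle_{\ell+1+g_R}\]
for some $G'\in\Perf\mathcal C$. This is exactly the bound $\Rdim(\mathcal C)\le\Rdim^R(\mathcal C)+\gentime_R(\Perf R)$ anticipated in the discussion around \eqref{eq:badRdimRbound}, and it recovers $\Rdim(\Perf\ZZ)\le 1$ from \cref{ex:RdimZ}.

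The intended mechanism is amortization of the $\Perf R$-resolutions. Since $\Perf R\subseteq\langle R\rangle_{g_R+1}$, for any $P\in\Perf R$ the object $P\otimes G$ is a summand of a $g_R$-fold iterated cone $\tilde A=[B^{(g_R)}\to\cdots\to B^{(0)}]$ whose terms $B^{(j)}$ are finite sums of shifts of $G$ (apply $-\otimes G$ to a resolution of $P$ by free $R$-modules). Given $T\in\langle G\rangle_{\ell+1}^R$, write $T$ as a summand of an $\ell$-fold iterated cone $[A_\ell\to\cdots\to A_0]$ with each $A_i$ a summand of $P_i\otimes G$; replacing each $A_i$ by the corresponding $\tilde A_i$ and absorbing summands, $T$ becomes a summand of a twisted complex built from the doubly-indexed family $\{B_i^{(j)}\}_{0\le i\le\ell,\ 0\le j\le g_R}$, all terms in $\langle G\rangle_1$. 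If the twisted differential strictly lowers the total degree $n(i,j):=i+j\in\{0,\dots,\ell+g_R\}$, then $T$ is a summand of an iterated cone with only $\ell+g_R+1$ layers, i.e.\ $T\in\langle G'\rangle_{\ell+1+g_R}$ with $G'=G$. This is the ``total complex of a bicomplex'' move and gives precisely the conjectured bound.

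The main obstacle is exactly this last point. The structure maps $A_a\to A_b$ of the twisted complex resolving $T$ are arbitrary morphisms of $\Perf\mathcal C$, and once expanded over the $B_a^{(j)}$, $B_b^{(j')}$ they need not be lower-triangular in the $j$-index: components raising $j$ can occur, destroying the $n=i+j$ filtration and pushing one back to the multiplicative grading $(g_R{+}1)i+j$, hence to \eqref{eq:badRdimRbound}. Removing or controlling these components is the crux. I would pursue two routes. The homological one: show every object of $\langle G\rangle_{\ell+1}^R$ is a summand of one whose resolving twisted complex admits a compatible bifiltration — e.g.\ by passing to semifree models and applying a gauge transformation, or by using that $\langle G\rangle^R\simeq\Perf E$ for the endomorphism DG $R$-algebra $E:=\mathrm{End}_R(G)$ and analyzing $\Perf E$ directly, where the statement becomes $\Rdim(\Perf E)\le\Rdim^R(\Perf E)+\mathrm{gl.dim}(R)$. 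The geometric one: since $\mathcal W(X;R)$ is obtained from a $\ZZ$-linear model by extension of scalars, build the resolution over the fixed base \emph{before} tensoring, so that the ``coefficient direction'' is constant across the $\ell$ cone-stages and the bifiltration is automatic; this would also localize the whole difficulty at the single ring $\ZZ$. I expect the gauge-fixing/bifiltration step to be the real difficulty, and the case $g_R\le 1$ (e.g.\ $R$ a field, a PID, or Dedekind — where the $j$-index is two-valued and perfect complexes over $R$ are formal) to be the natural first target.
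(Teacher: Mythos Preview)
The statement you are attempting to prove is a \emph{conjecture} in the paper, not a theorem: the paper offers no proof and explicitly leaves it open, flagging it with ``cf.\ \cref{q:rdimvsrdimr}'' precisely because it would follow from an affirmative answer to that open question. So there is no paper proof to compare your proposal against.

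Your reduction is exactly the one the authors intend: the conjecture is equivalent, via \cref{eq:lssect} and \cref{prop:TLSCatToRdim}, to the purely categorical inequality $\Rdim(\mathcal C)\le \Rdim^R(\mathcal C)+\gentime_R(\Perf R)$, which is the sharpened form of \cref{q:rdimvsrdimr} the paper already speculates about just after \eqref{eq:badRdimRbound}. You have correctly isolated the crux --- the failure of the bifiltration/total-complex argument because the structure maps of the outer twisted complex need not be lower-triangular in the inner $j$-index --- and this is precisely why the statement remains a conjecture rather than a proposition. Your proposed workarounds (gauge-fixing to a bifiltered model, or building the resolution over $\ZZ$ before base change) are reasonable lines of attack, but neither is carried out, and the paper gives no indication that either is known to succeed. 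In short, your proposal is a faithful outline of why the conjecture is plausible and where the difficulty lies, but it is not a proof, and the paper does not claim one either.
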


\subsection{... On cuplength, \texorpdfstring{$\Rdim(\mathcal W(T^*M;R))$}{Rouquier dimension}, and a result of Hofer } \label{subsubsec:cuplength}

We do not know whether the cuplength provides a lower bound on $\Rdim(\mathcal W(T^*M); R)$ since our result is only for the generation time by the cotangent fiber (and not arbitrary split-generators). However, we conjecture that this lower bound does hold. 
\begin{conj} \label{conj:cuplength}
For any smooth compact manifold $M$ and any commutative ring $R$ and the \textit{canonical} grading/orientation data on $\mathcal W(T^*M; R)$, 
\[
\cuplength H^*(M; R) \le
\Rdim^R(\mathcal W(T^*M; R)).
\]
\end{conj}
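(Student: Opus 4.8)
The aim is to upgrade the ghost–lemma argument behind \cref{prop:lower_bound_cuplength} --- which gives $\cuplength H^*(M;R)\le\gentime_{T^*_qM}M$ --- to a lower bound for $\gentime_G(\mathcal W(T^*M;R))$ valid for \emph{every} split-generator $G$. Fix positive-degree classes $a_1,\dots,a_r\in H^*(M;R)$ with $a_1\cup\cdots\cup a_r\neq0$, where $r=\cuplength H^*(M;R)$, and recall the algebra isomorphism $H^*(M;R)\cong WH^*(M,M)$, so that each $a_i$ is a morphism $M\to M[|a_i|]$ in $\mathcal W(T^*M;R)$ and the composite $a_1\cdots a_r$ is a nonzero endomorphism of a shift of $M$. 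Two structural facts would underpin the argument. First, \emph{centrality}: under the identification $\Perf\mathcal W(T^*M;R)\simeq\Perf C_{-*}(\Omega M)$ there is a unital ring map $H^*(M;R)\cong H^*(B\Omega M;R)\to HH^*(C_{-*}(\Omega M))$ (equivalently, $WH^*(M,M)$ maps to $HH^*(\mathcal W(T^*M;R))\cong H^*(\mathcal L M;R)$), so each $a_i$ determines a natural transformation $\mathrm{id}\Rightarrow(-)[|a_i|]$ whose value on $M$ is cup product by $a_i$. Second, \emph{finiteness}: for any object $G$ of $\Perf\mathcal W(T^*M;R)$ the graded space $WH^*(G,M)$ is bounded (finite-dimensional over a field), since $G$ is a retract of a finite iterated cone of shifts of $T^*_qM$ and $WH^*(T^*_qM,M)$ is one-dimensional; hence $WH^*(G,M)$ is a bounded graded module over $WH^*(M,M)=H^*(M;R)$, so $H^{>0}(M;R)$ acts on it nilpotently and any product of more than its amplitude of the $a_i$ annihilates it.

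If this last fact could be sharpened to the assertion that each $a_i$ acts as \emph{zero} on $WH^*(G,M)$ --- i.e.\ that each $a_i\colon M\to M[|a_i|]$ is a $G$-ghost --- then applying \cite[Lemma 4.11]{rouquier2008dimensions} to the $r$ composable $G$-ghosts $a_1,\dots,a_r$ with nonzero composition would give $\gentime_G(M)\ge r$, hence $\Rdim^R(\mathcal W(T^*M;R))\ge r$. The main obstacle is precisely that this sharpening fails for a general $G$: already for $G=M$ itself, $a_i$ acts on $WH^*(M,M)=H^*(M;R)$ by cup product, which is nonzero, so the zero section is simply the wrong test object once $M$ is allowed as a generator. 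Closing this gap therefore seems to require a genuinely new ingredient, and I see three plausible routes.

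First, one could try to strengthen the ghost lemma so that a central element acting with controlled order of nilpotency takes the place of a ghost morphism, in the spirit of the complexity and level estimates of commutative algebra; the difficulty is that the standard cellular-filtration argument only controls the $a_i$-action on the associated graded of $WH^*(G,-)$, not the chain-level maps the ghost condition refers to. Second, using centrality one could seek, for each $G$, a tailored test object --- some localization of, or Koszul-type object built from, $G$ --- on which the $a_i$ do act as $G$-ghosts. Third, and perhaps most promising, one could transport the problem through Koszul duality between $C_{-*}(\Omega M)$ and $C^*(M)$: the conjecture then becomes the assertion that the Rouquier dimension of the category of cohomologically-finite $C^*(M)$-modules is at least $\cuplength H^*(M;R)$, with the cotangent fiber corresponding to the residue field and the zero section to $C^*(M)$ --- a dg-commutative-algebra analogue of Rouquier's lower bound $\Rdim D^b\Coh(Y)\ge\dim Y$ for smooth affine $Y$ \cite{rouquier2008dimensions} --- which one would attempt to prove by importing the d\'evissage along a chain of subquotients underlying that result, with $\cuplength$ playing the role of Krull dimension. (A complementary partial approach, handling only special $M$, extracts the bound from geometry: a Weinstein subdomain of $T^*M$ of the form $T^*(\text{torus of rank }r)$ already forces $\Rdim^R\ge r$ via the restriction functor and the sharp computation for cotangent bundles of tori.) In each of these routes the decisive and hardest point is exactly the one on which the argument of \cref{prop:lower_bound_cuplength} breaks down: controlling an arbitrary split-generator in place of the distinguished cotangent fiber.
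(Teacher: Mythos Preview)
This statement is labeled as a \emph{conjecture} in the paper, and the paper does not supply a proof. Immediately before stating it, the authors write that they do not know whether the cuplength provides a lower bound on $\Rdim(\mathcal W(T^*M;R))$, precisely because \cref{prop:lower_bound_cuplength} only controls the generation time by the cotangent fiber and not by an arbitrary split-generator. So there is no proof in the paper to compare your proposal against.

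Your analysis of the obstruction is accurate and matches the paper's own diagnosis: the ghost-lemma argument breaks because for a general generator $G$ the classes $a_i$ need not act as zero on $WH^*(G,M)$, and you give the right counterexample ($G=M$). Your proposal is therefore not a proof but a survey of possible attack routes, and you are explicit about this. Among your three routes, the Koszul-duality reformulation is a reasonable heuristic but would require care: the equivalence $\Perf C_{-*}(\Omega M)\simeq$ (cohomologically finite $C^*(M)$-modules) is only available under connectivity hypotheses on $M$, and even then importing Rouquier's Krull-dimension d\'evissage is nontrivial since $C^*(M)$ is not a commutative ring but only an $E_\infty$-algebra. The ``tailored test object'' route is closest in spirit to how such lower bounds are sometimes obtained in algebraic geometry, but as you say, making it work for every $G$ is exactly the missing idea. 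In short: your write-up is a correct identification of why the conjecture is open, not a resolution of it, and the paper agrees.
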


Cuplength has previously appeared as an obstruction in symplectic geometry in several contexts. In \cite{hofer1985lagrangian}, it was shown that 
if $\phi \colon  T^*M\rightarrow T^*M $ is a Hamiltonian symplectomorphism, then the number of (not necessarily transverse) intersection points between $M$ and $\phi(M)$ is at least the cuplength of $H^*(M; R)$,
$$
\cuplength H^*(M; R) \le |M \cap \phi(M)| $$
for any commutative ring $R$.
In fact, \cite{hofer1988lusternik,floer1989cuplength} proves this result for compact Lagrangians in a compact symplectic manifold satisfying $\pi_2(X;L)=0$ where $R=\ZZ/2\ZZ$.

The bounds in this paper appear to be related, and we expect they can be generalized to obtain an improvement of Hofer's result in $T^*M$. 
Namely, let $\phi: T^*M\rightarrow T^*M$ be any Hamiltonian symplectomorphism so that each intersection point $q_i \in M \cap \phi(M)$ is collared in the sense of \cref{subsubsec:degenerate}. Then whenever $R$ is a field
\begin{equation}
\cuplength H^*(M; R) \le \actval|M \cap \phi(M)| 
\label{eq:cuplengthbounddegenerate}
\end{equation}
To see this,  suppose that $q_1, \cdots, q_k \in M \cap \phi(M)$ are the intersection points. First, assume that the $q_i$  have distinct action values. Then $M$ is equivalent to a twisted complex on $k$ Lagrangian disks $\Gamma_{df_i} \subset T^*D^n_i$, where $f_i: D_i\rightarrow \mathbb{R}$ is a function with collared isolated critical points on a disk $D_i \subset M$ containing $q_i$. Then $\gentime_{T^*_0 D^n_i}(\Gamma_{df_i}) = 0$ since $\gentime_R(\Perf R) = 0$. Hence, $\gentime_{T^*_q M}(M) \le k$. In the case when the intersection points can have repeated action values, only the number of distinct action values matters. 

\begin{conjecture}
    The inequality from \cref{eq:cuplengthbounddegenerate} holds without the collared condition on the intersection of $L$ and $\phi(L)$.
\end{conjecture}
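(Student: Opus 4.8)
The plan is to deduce the statement from the collared case \cref{eq:cuplengthbounddegenerate} by a surgery on $\phi$ near the zero section that puts it in collared position while preserving, exactly, the set of action values of $M\cap\phi(M)$.

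By \cref{prop:lower_bound_cuplength} and the fact that $\phi(M)$ is Hamiltonian isotopic to the zero section in $T^*M$ --- so $\gentime_{T^*_qM}(M)=\gentime_{T^*_qM}(\phi(M))$ --- it suffices, working over a field, to bound $\gentime_{T^*_qM}(\phi(M))$ by $|\actval(M\cap\phi(M))|$. Label the (isolated) intersection points $q_1,\dots,q_N\in M\cap\phi(M)$ and the corresponding actions $a_i=f(q_i)$, where $f$ is the primitive of $\lambda$ restricted to $\phi(M)$. Pick pairwise disjoint closed balls $D_i\ni q_i$ inside which $\phi(M)$ is, after a $C^\infty$-small Hamiltonian perturbation supported near $q_i$ and fixing $q_i$, the graph $\Gamma_{dF_i}$ of a function $F_i$ on $D_i$ with $q_i$ as its unique (possibly degenerate) critical point and $F_i(q_i)=a_i$; this perturbation creates no new intersections with the zero section and does not change the value $a_i$, so $|\actval(M\cap\phi(M))|$ is unchanged.

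Now I would collar each $F_i$. On the annulus $A_i=D_i\setminus\Int(D_i')$, $dF_i$ is nowhere zero; the claim is that, rel a neighborhood of $\partial D_i'$, one can isotope $F_i|_{A_i}$ through functions without critical points on $A_i$ to one of collared form $r^2g_i(\theta)$ near $\partial D_i$, with $g_i\colon S^{n-1}\to\RR$ nonzero at each of its critical points (so $r^2 g_i$ has no critical points away from the origin). Granting this, the function $\wt F$ agreeing with the interpolation on $\bigcup_iA_i$, with $F_i$ on $\bigcup_iD_i'$, and with the (perturbed) $\phi(M)$-graph outside $\bigcup_iD_i$ defines a Hamiltonian diffeomorphism $\wt\phi$ --- isotopic to $\phi$ through maps supported in $\bigcup_iD_i$ --- whose intersection with the zero section is still $\{q_i\}$ with the same actions $\{a_i\}$, and which is now collared. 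Then \cref{eq:cuplengthbounddegenerate} applies to $\wt\phi$, giving $\cuplength H^*(M;R)\le|\actval(M\cap\wt\phi(M))|=|\actval(M\cap\phi(M))|$.

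The main obstacle is the collaring claim. It is an h-principle-type assertion about functions without critical points on the annulus $S^{n-1}\times[0,1]$: one must homotope the Gauss map of $dF_i|_{A_i}$, rel the inner boundary, to that of $d(r^2g_i)$ through nowhere-vanishing $1$-forms. The relevant obstruction lies in $H^{n-1}(A_i,\partial_{\mathrm{in}}A_i;\pi_{n-1}(S^{n-1}))\cong\ZZ$ and is essentially the local degree of $dF_i$ at $q_i$; while the extension of $dF_i$ over $D_i'$ constrains this class, one must still ensure the collar model realizes the same value, which forces $g_i$ (and possibly the size and shape of $D_i$) to be chosen in terms of the local structure of $F_i$ at $q_i$, and may require enlarging $D_i$ to spread out the obstruction. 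I expect this parametric surgery on functions without critical points --- in the spirit of the flexibility results of Eliashberg--Mishachev --- to be the crux, and the reason the statement is posed as a conjecture. An alternative that would avoid it is a strengthening of \cref{prop:lagrangianCobordism} in which the local pieces $L_i$ are not required to have Legendrian boundary, only to be generated in time $\gentime_R(\Perf R)$ inside a Weinstein neighborhood of their image in the skeleton; over a field this again produces the bound by action values alone.
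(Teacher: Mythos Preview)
The statement you are addressing is a \emph{conjecture} in the paper, not a theorem; the paper offers no proof, only the heuristic that the collared case goes through and the expectation that the collaring hypothesis is inessential. So there is no ``paper's own proof'' to compare against.

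Your outline is a reasonable attack, and you are honest about where it breaks: the collaring step on the annulus is the whole difficulty. A few remarks. First, the preliminary perturbation you invoke (making $\phi(M)$ a graph $\Gamma_{dF_i}$ near each $q_i$) is unnecessary: $\phi(M)$ is already locally a graph over $M$ near any intersection point, since it is Lagrangian and transverse to the cotangent fibers there. Second, your identification of the obstruction as a class in $H^{n-1}(A_i,\partial_{\mathrm{in}}A_i;\pi_{n-1}(S^{n-1}))$ measuring the local degree of $dF_i$ at $q_i$ is correct in spirit, but it is not clear that the collar models $r^2 g_i(\theta)$ with $g_i$ nonvanishing at its critical points realize \emph{every} integer as the degree of $d(r^2 g_i)|_{\partial D_i}$; this is a nontrivial constraint on $g_i$ that you would need to analyze before the h-principle machinery can even be invoked. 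Third, even if the formal obstruction matches, the relevant h-principle is for \emph{functions} without critical points (not just nonvanishing $1$-forms), and on an annulus rel boundary this is genuinely subtle in low dimensions.

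Your alternative --- strengthening \cref{prop:lagrangianCobordism} to allow pieces $L_i$ that are not Legendrian-bounded but merely land in a local $T^*D^n$ --- is closer to what the paper itself gestures at in \cref{subsubsec:degenerate}, and is probably the more promising route. But neither you nor the paper carries it out; the statement remains open.
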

This naturally to the leads to the following more general question.
\begin{question}
    What is the relation between \cite{hofer1988lusternik,floer1989cuplength} bound for Lagrangians $L$ in compact $X$ with $\pi_2(X, L)=0$ and our generation bound \cref{eq:cuplengthbounddegenerate}?
\end{question}

\subsection{... on generation of \texorpdfstring{$\mathcal W(T^*M)$}{the wrapped Fukaya category of the cotangent bundle}}
\begin{question}
Given a submanifold $i: U \subset  M$, possibly disconnected, then when does $N_U \subset T^*M$ generate/split-generate $\mathcal W(T^*M)$?
\end{question}
One of our main observations is that if $U \subset M$ is null-homotopic, then $N_U$ split-generates $\mathcal W(T^*M)$ over a field. On the other end of the spectrum, we observe that if $i$ has a retract $r$, then $N_U$ cannot generate or split-generate $\mathcal W(T^*M)$; we note that this setting is in some sense orthogonal to the setting when $i$ is null-homotopic. 
To see this, we note that there are functors $$
\mathcal W(T^*U) \rightarrow \mathcal W(T^*M) \rightarrow \mathcal W(T^*U)
$$
induced by $i$ and $r$ respectively (for example, induced by the DGA maps 
$C_*(\Omega U) \rightarrow C_*(\Omega M) \rightarrow C_*(\Omega U)$).
These functors take $U \subset T^*U$ to $N_U \subset T^*M$ and back to $U \subset T^*U$ and also take $T^*_x M$ to $T^*_x U$. 
So if $N_U$ generated/split-generated $\mathcal W(T^*M)$, then it would generate/split-generate $T^*_x M$, and hence its image $U$ in $T^*U$ would generate/split-generate $T^*_x U$. However, $U$ does not generate/split-generate $T_x^*U \subset T^*U$ since the former has a finite-dimensional morphism space while the latter has in general infinite-dimensional morphism space if $U$ is simply connected. 

We observe that most of our results give upper bounds on generation by cotangent fibers, and the lower bound is also for generation for cotangent fibers. However, interesting phenomena can occur by focusing on resolving the category. For example, as observed in \cite{lazarev2022symplectic}, for any object $C$ of a triangulated category, the direct sum
$$
C \oplus C \oplus C[1]
$$
\textit{resolves} $C$ via two cones. On the other hand, this object generates $C$ immediately. Hence $T^*_q M\oplus T^*_q M \oplus T_q^*M[1]$ resolves $T^*_q M$ in time $2$, and we do not know whether this is the minimum number.  
\begin{question}
Is it true that two mapping cones are required for $T^*_q M\oplus T^*_q M \oplus T_q^*M[1]$ to generate $T^*_q M$?
Are there examples of generators $G$ for $\mathcal W(T^*M)$ whose generation time (or even split-generation) time is bigger than that of $T_q^*M$?
\end{question}
The version of the question above for split-generation is precisely the computation of the Orlov spectrum of $\mathcal W(T^*M)$.

\subsection{... about sectorial LS-category and action values}
In the smooth setting, we have the bound 
$$
\LS(M) \le \min_{f:M\to \RR \text{ Morse}}|\critval(f)|
$$
by using the stable manifolds of the critical points of the Morse function to cover the manifold. In fact, Morse can be replaced by $f$ with isolated critical values (see, for instance, \cite{rudyak2003lusternik}). In the case of general symplectic manifolds, one potential analog of the number of critical values of a Morse function is the intersection number of skeleta. We do not know whether the analogous inequality holds:
\begin{question}
Does the following inequality hold
\begin{equation}
\LS_{sect}(X) \le \min_{\phi: X\to X, \LL_X} |A_H(\LL_X \cap 
\phi(\LL_X))|
\label{eq:lasteq}
\end{equation}
where $\phi$ is a Hamiltonian isotopy generated by $H$ and $\LL_X$ a skeleton of $X$, possibly after Weinstein homotopy.    
\end{question}
We note that both of these expressions bound $\Rdim(X)$. This is a geometric analog of \cref{question:ddimvsLS}. In particular, is $LS(M)$ a lower bound for the right hand side of \cref{eq:lasteq} for any skeleton for $T^*M$?

\printbibliography

\Addresses

\end{document}